\newtheorem{thm}{Theorem}[section]
\newtheorem{cor}[thm]{Corollary}
\newtheorem{lem}[thm]{Lemma}
\newtheorem{prop}[thm]{Proposition}
\newtheorem{defin}[thm]{Definition}
\newtheorem{rem}[thm]{Remark}
\numberwithin{equation}{section}
\journal{}
\begin{document}

\begin{frontmatter}



\title{Homogeneous fractional integral operators on Lebesgue and Morrey spaces, Hardy--Littlewood--Sobolev and Olsen-type inequalities}


\author{Kaikai Yang and Hua Wang \footnote{In memory of Li Xue. E-mail address: 1362447069@qq.com, wanghua@pku.edu.cn.}}
\address{School of Mathematics and Systems Science, Xinjiang University,\\
Urumqi 830046, P. R. China}

\begin{abstract}
Let $T_{\Omega,\alpha}$ be the homogeneous fractional integral operator defined as
\begin{equation*}
T_{\Omega,\alpha}f(x):=\int_{\mathbb R^n}\frac{\Omega(x-y)}{|x-y|^{n-\alpha}}f(y)\,dy,
\end{equation*}
and the related fractional maximal operator $M_{\Omega,\alpha}$ is given by
\begin{equation*}
M_{\Omega,\alpha}f(x):=\sup_{r>0}\frac{1}{|B(x,r)|^{1-\alpha/n}}\int_{|x-y|<r}|\Omega(x-y)f(y)|\,dy.
\end{equation*}
In this article, we will use the idea of Hedberg to reprove that the operators $T_{\Omega,\alpha}$ and $M_{\Omega,\alpha}$ are bounded from $L^p(\mathbb R^n)$ to $L^q(\mathbb R^n)$ provided that $\Omega\in L^s(\mathbf{S}^{n-1})$, $s'<p<n/{\alpha}$ and $1/q=1/p-{\alpha}/n$, which was obtained by Muckenhoupt and Wheeden. We also reprove that under the assumptions that $\Omega\in L^s(\mathbf{S}^{n-1})$, $s'\leq p<n/{\alpha}$ and $1/q=1/p-{\alpha}/n$, the operators $T_{\Omega,\alpha}$ and $M_{\Omega,\alpha}$ are bounded from $L^p(\mathbb R^n)$ to $L^{q,\infty}(\mathbb R^n)$, which was obtained by Chanillo, Watson and Wheeden. We will use the idea of Adams to show that $T_{\Omega,\alpha}$ and $M_{\Omega,\alpha}$ are bounded from $L^{p,\kappa}(\mathbb R^n)$ to $L^{q,\kappa}(\mathbb R^n)$ whenever $s'<p<n/{\alpha}$ and $1/q=1/p-\alpha/{n(1-\kappa)}$, and bounded from $L^{p,\kappa}(\mathbb R^n)$ to $WL^{q,\kappa}(\mathbb R^n)$ whenever $s'\leq p<n/{\alpha}$ and $1/q=1/p-\alpha/{n(1-\kappa)}$. Some new estimates in the limiting cases are also established. The results obtained are substantial improvements and extensions of some known results. Moreover, we will apply these results to several well-known inequalities such as Hardy--Littlewood--Sobolev and Olsen-type inequalities.
\end{abstract}

\begin{keyword}
Fractional integral operators, fractional maximal operators, homogeneous kernel, Morrey spaces, Hardy--Littlewood--Sobolev inequalities, Olsen-type inequalities
\MSC[2020] Primary 42B20, 42B25; Secondary 42B35
\end{keyword}

\end{frontmatter}

\section{Introduction}
\label{sec1}
Let $\mathbb R^n$ be the $n$-dimensional Euclidean space endowed with the Lebesgue measure $dx$ and the Euclidean norm $|\cdot|$. Given $\alpha\in(0,n)$, the Riesz potential operator $I_{\alpha}$ of order $\alpha$ (also referred to as the fractional integral operator) is defined by
\begin{equation*}
I_{\alpha}f(x):=\frac{1}{\gamma(\alpha)}\int_{\mathbb R^n}\frac{f(y)}{|x-y|^{n-\alpha}}dy,\quad x\in\mathbb R^n,
\end{equation*}
where $\gamma(\alpha):=\frac{2^{\alpha}\pi^{n/2}\Gamma(\alpha/2)}{\Gamma({(n-\alpha)}/2)}$ and $\Gamma(\cdot)$ being the usual gamma function. Let $(-\Delta)^{\alpha/2}$ (under $0<\alpha<n$) denote the $\alpha/2$-th order Laplacian. Then $u=I_{\alpha}f$ is viewed as a solution of the $\alpha/2$-th order Laplace equation
\begin{equation*}
(-\Delta)^{\alpha/2}u=f
\end{equation*}
in the sense of the Fourier transform; i.e., $(-\Delta)^{\alpha/2}$ exists as the inverse of $I_{\alpha}$. It is well known that the Riesz potential operator $I_{\alpha}$ plays an important role in harmonic analysis, PDE and potential theory, particularly in the study of smoothness properties of functions. Let $1<p<q<\infty$. The classical Hardy--Littlewood--Sobolev theorem states that $I_{\alpha}$ is bounded from $L^p(\mathbb R^n)$ to $L^q(\mathbb R^n)$ if and only if $1/q=1/p-\alpha/n$. We also know that for $1=p<q<\infty$, $I_{\alpha}$ is bounded from $L^1(\mathbb R^n)$ to $L^{q,\infty}(\mathbb R^n)$ if and only if $q=n/{(n-\alpha)}$. However, for the critical index $p=n/{\alpha}$ and $0<\alpha<n$, the strong type $(p,\infty)$ estimate of the operator $I_{\alpha}$ does not hold in general (see \cite[p.119]{stein}). Instead, in this case, we have that as a substitute the Riesz potential operator $I_{\alpha}$ is bounded from $L^{n/{\alpha}}(\mathbb R^n)$ to $\mathrm{BMO}(\mathbb R^n)$ (see \cite[p.130]{duoand}). For each locally integrable function $f$, closely related to the Riesz potential operator is the fractional maximal operator $M_{\alpha}$ on $\mathbb R^n$, which is defined by
\begin{equation*}
M_{\alpha}f(x):=\sup_{r>0}\frac{1}{m(B(x,r))^{1-\alpha/n}}\int_{|x-y|<r}|f(y)|\,dy,\quad x\in\mathbb R^n,
\end{equation*}
where the supremum is taken over all $r>0$. For $x_0\in\mathbb R^n$ and $r>0$, let $B(x_0,r)=\{x\in\mathbb R^n:|x-x_0|<r\}$ denote the open ball centered at $x_0$ of radius $r$, $B(x_0,r)^{\complement}$ denote its complement and $m(B(x_0,r))$ be the Lebesgue measure of the ball $B(x_0,r)$. It can be shown that $M_{\alpha}$ satisfies the same norm inequalities as $I_{\alpha}$ for $0<\alpha<n$, $1<p<n/{\alpha}$ and $1/q=1/p-\alpha/n$. The weak type $(1,q)$ inequality can be proved by using covering lemma arguments when $q=n/{(n-\alpha)}$ (see, for example, \cite[Theorem 2]{muckenhoupt2}), and $M_{\alpha}$ is clearly of strong type $(p,\infty)$ when $p=n/{\alpha}$. Actually, by H\"{o}lder's inequality
\begin{equation*}
\frac{1}{m(B(x,r))^{1-\alpha/n}}\int_{|x-y|<r}|f(y)|\,dy\leq\bigg(\int_{|x-y|<r}|f(y)|^p\,dy\bigg)^{1/p}\leq\|f\|_{L^p}.
\end{equation*}
The strong type $(p,q)$ inequalities are established via the Marcinkiewicz interpolation theorem. Recall that, for any given $p\in[1,\infty)$, the space $L^p(\mathbb R^n)$ is defined as the set of all measurable functions $f$ on $\mathbb R^n$ such that
\begin{equation*}
\|f\|_{L^p}:=\bigg(\int_{\mathbb R^n}|f(x)|^p\,dx\bigg)^{1/p}<\infty,
\end{equation*}
and the space $L^{p,\infty}(\mathbb R^n)$ is defined as the set of all measurable functions $f$ on $\mathbb R^n$ such that
\begin{equation*}
\|f\|_{L^{p,\infty}}:=\sup_{\lambda>0}\lambda\cdot m\big(\big\{x\in\mathbb R^n:|f(x)|>\lambda\big\}\big)^{1/p}<\infty.
\end{equation*}
Let $L^{\infty}(\mathbb R^n)$ denote the Banach space of all essentially bounded measurable functions $f$ on $\mathbb R^n$. A locally integrable function $f$ is said to be in the bounded mean oscillation space $\mathrm{BMO}(\mathbb R^n)$ (see \cite{john}), if
\begin{equation*}
\|f\|_{\mathrm{BMO}}:=\sup_{B}\frac{1}{m(B)}\int_B|f(x)-f_B|\,dx<\infty,
\end{equation*}
where $f_B$ denotes the average of $f$ on $B$; i.e., $f_B:=\frac{1}{m(B)}\int_B f(y)\,dy$ and the supremum is taken over all balls $B$ in $\mathbb R^n$. Modulo constants, the space $\mathrm{BMO}(\mathbb R^n)$ is a Banach space with respect to the norm $\|\cdot\|_{\mathrm{BMO}}$.

Let $\mathbf{S}^{n-1}=\{x\in\mathbb R^n:|x|=1\}$ denote the unit sphere in $\mathbb R^n$($n\ge2$) equipped with the normalized Lebesgue measure $d\sigma(x')$. Let $\Omega$ be a homogeneous function of degree zero on $\mathbb R^n$ and $\Omega\in L^s(\mathbf{S}^{n-1})$ with $s\geq1$. Then the homogeneous fractional integral operator $T_{\Omega,\alpha}$ is defined by
\begin{equation*}
T_{\Omega,\alpha}f(x):=\int_{\mathbb R^n}\frac{\Omega(x-y)}{|x-y|^{n-\alpha}}f(y)\,dy,\quad x\in\mathbb R^n.
\end{equation*}
Given any $\alpha\in(0,n)$ and $f\in L^1_{\mathrm{loc}}(\mathbb R^n)$, the fractional maximal function of $f$ is defined by
\begin{equation*}
M_{\Omega,\alpha}f(x):=\sup_{r>0}\frac{1}{m(B(x,r))^{1-\alpha/n}}\int_{|x-y|<r}|\Omega(x-y)f(y)|\,dy,\quad x\in\mathbb R^n,
\end{equation*}
where the supremum on the right-hand side is taken over all $r>0$.
\begin{description}
  \item[1] When $\alpha=0$ and $\Omega\equiv1$, $M_{\Omega,\alpha}$ is the standard Hardy--Littlewood maximal operator on $\mathbb R^n$, which is denoted by $M$;
  \item[2] When $\Omega\equiv1$, $M_{\Omega,\alpha}$ is just the fractional maximal operator $M_{\alpha}$ on $\mathbb R^n$, and $T_{\Omega,\alpha}$ is just the Riesz potential operator $I_{\alpha}$.
\end{description}
In the present work, we study the homogeneous fractional integral operators and fractional maximal operators acting on Lebesgue and Morrey spaces.
In \cite{muckenhoupt1}, Muckenhoupt and Wheeden proved that $T_{\Omega,\alpha}$ is bounded from $L^p(\mathbb R^n)$ to $L^q(\mathbb R^n)$, when $\Omega\in L^s(\mathbf{S}^{n-1})$ with $s>p'$, $1<p<n/{\alpha}$ and $1/q=1/p-{\alpha}/n$ (see also \cite{ding1,lu} for the weighted version). We will give an alternative proof of this result, following the idea due to Hedberg in \cite{hed}. In \cite{cha},  Chanillo, Watson and Wheeden showed that $T_{\Omega,\alpha}$ is of weak type $(1,n/{(n-\alpha)})$ when $\Omega\in L^s(\mathbf{S}^{n-1})$ with $s\geq n/{(n-\alpha)}$. We will show that $T_{\Omega,\alpha}$ is of weak type $(p,q)$ when $\Omega\in L^s(\mathbf{S}^{n-1})$ with $s'\leq p<n/{\alpha}$ and $1/q=1/p-{\alpha}/n$. For Riesz potential operators $I_{\alpha}$($0<\alpha<n$), Hedberg \cite{hed} established the following pointwise inequality:
\begin{equation*}
|I_{\alpha}f(x)|\leq C\big[\|f\|_{L^p}\big]^{1-p/q}\big[Mf(x)\big]^{p/q},\quad x\in\mathbb R^n,
\end{equation*}
for any $f\in L^p(\mathbb R^n)$ with $1\leq p<n/{\alpha}$ and $1/q=1/p-{\alpha}/n$. For homogeneous fractional integral operators $T_{\Omega,\alpha}$, we will establish the corresponding estimate
\begin{equation*}
\begin{split}
|T_{\Omega,\alpha}f(x)|\leq C\big[\|f\|_{L^p}\big]^{1-p/q}\big[M_{s'}f(x)\big]^{p/q},\quad x\in\mathbb R^n,
\end{split}
\end{equation*}
where the operator $M_{s'}$ is given by $M_{s'}(f)=[M(|f|^{s'})]^{1/{s'}}$. Based on this pointwise domination and the boundedness of $M$, we can prove norm inequalities involving fractional maximal functions and fractional integral operators. Moreover, in \cite{ding2}, Ding and Lu considered the critical case $p=n/{\alpha}$, and proved that if $\Omega$ satisfies certain smoothness condition $\mathfrak{D}_{s}$ with $s>n/{(n-\alpha)}$(see Definition \ref{defd} below), then $T_{\Omega,\alpha}$ is bounded from $L^{p}(\mathbb R^n)$ to $\mathrm{BMO}(\mathbb R^n)$. For the critical index $p=n/{\alpha}$, we will show that this result also holds for $s=n/{(n-\alpha)}$.

On the other hand, the classical Morrey spaces $L^{p,\kappa}(\mathbb R^n)$ were originally introduced by Morrey in \cite{morrey} to study the local regularity of solutions to second order elliptic partial differential equations. Nowadays these spaces have been studied intensively in the literature, and found a wide range of applications in harmonic analysis, potential theory and nonlinear dispersive equations. Let us first recall the definition of the Morrey space. Let $1\leq p<\infty$ and $0\leq\kappa\leq1$. We denote by $L^{p,\kappa}(\mathbb R^n)$ the Morrey space of all $p$-locally integrable functions $f$ on $\mathbb R^n$ such that
\begin{equation*}
\begin{split}
\|f\|_{L^{p,\kappa}}:=&\sup_{B}\bigg(\frac{1}{m(B)^{\kappa}}\int_B|f(x)|^p\,dx\bigg)^{1/p}\\
=&\sup_{B}\frac{1}{m(B)^{\kappa/p}}\big\|f\cdot\chi_{B}\big\|_{L^p}<\infty.
\end{split}
\end{equation*}
Note that $L^{p,0}(\mathbb R^n)=L^p(\mathbb R^n)$ and $L^{p,1}(\mathbb R^n)=L^{\infty}(\mathbb R^n)$ by the Lebesgue
differentiation theorem. If $\kappa<0$ or $\kappa>1$, then $L^{p,\kappa}=\Theta$, where $\Theta$ is the set of all functions equivalent to $0$ on $\mathbb R^n$. We also denote by $WL^{p,\kappa}(\mathbb R^n)$ the weak Morrey space of all measurable functions $f$ on $\mathbb R^n$ such that
\begin{equation*}
\begin{split}
\|f\|_{WL^{p,\kappa}}:=&\sup_{B}\frac{1}{m(B)^{\kappa/p}}\big\|f\cdot\chi_{B}\big\|_{L^{p,\infty}}\\
=&\sup_{B}\frac{1}{m(B)^{\kappa/p}}\sup_{\lambda>0}\lambda\cdot m\big(\big\{x\in B:|f(x)|>\lambda\big\}\big)^{1/p}<\infty.
\end{split}
\end{equation*}
In \cite{peetre}, Peetre established the boundedness properties of the Riesz potential operator $I_{\alpha}$ and fractional maximal operator $M_{\alpha}$ on Morrey spaces. His results can be summarized as follows.

\newtheorem*{thma}{Theorem A}
\begin{thma}[\cite{peetre}]
Let $0<\alpha<n$, $1<p<n/{\alpha}$ and $1/q=1/p-\alpha/n$. If $0<\kappa<p/q$, then the fractional maximal operator $M_\alpha$ and the Riesz potential operator $I_{\alpha}$ are bounded from $L^{p,\kappa}(\mathbb R^n)$ to $L^{q,\kappa^{*}}(\mathbb R^n)$ with $\kappa^{*}:={(\kappa q)}/p$. Moreover, if $p=1$ and $0<\kappa<1/q$, then the operators $M_{\alpha}$ and $I_{\alpha}$ are bounded from $L^{1,\kappa}(\mathbb R^n)$ to $WL^{q,\kappa^{*}}(\mathbb R^n)$ with $q:=n/{(n-\alpha)}$ and $\kappa^{*}:=\kappa q$.
\end{thma}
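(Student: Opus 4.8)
The plan is to prove Theorem~A by the standard Morrey-space adaptation of Adams' argument: a local/global splitting in which the local piece is absorbed by the known Lebesgue-space bounds and the global piece is controlled by the Morrey condition. Fix a ball $B=B(x_{0},R)$ and decompose $f=f_{1}+f_{2}$ with $f_{1}:=f\chi_{2B}$ and $f_{2}:=f\chi_{\mathbb R^{n}\setminus 2B}$. By sublinearity of $I_{\alpha}$ and $M_{\alpha}$ it is enough to estimate the two pieces on $B$ separately. For the local piece I would simply quote the results recalled in Section~\ref{sec1}: the Hardy--Littlewood--Sobolev inequality $\|I_{\alpha}g\|_{L^{q}}\le C\|g\|_{L^{p}}$ (for $1<p<n/\alpha$) and its weak $(1,q)$ endpoint, together with the analogous strong $(p,q)$ and weak $(1,q)$ bounds for $M_{\alpha}$. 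Since $\|f\chi_{2B}\|_{L^{p}}\le m(2B)^{\kappa/p}\|f\|_{L^{p,\kappa}}\le CR^{n\kappa/p}\|f\|_{L^{p,\kappa}}$, this shows that $\|I_{\alpha}f_{1}\cdot\chi_{B}\|_{L^{q}}$ (and $\|M_{\alpha}f_{1}\cdot\chi_{B}\|_{L^{q}}$) is at most $CR^{n\kappa/p}\|f\|_{L^{p,\kappa}}$, with the obvious modifications --- weak $L^{q}$ norm and exponent $n\kappa$ --- in the endpoint case $p=1$.

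For the global piece I would exploit that $|x-y|\sim|x_{0}-y|$ whenever $x\in B$ and $y\notin 2B$, split $\mathbb R^{n}\setminus 2B$ into dyadic annuli $2^{j}R\le|x_{0}-y|<2^{j+1}R$ ($j\ge1$), and apply H\"older's inequality on each annulus to the Morrey norm, obtaining
\begin{equation*}
|I_{\alpha}f_{2}(x)|\le C\sum_{j\ge1}(2^{j}R)^{\alpha-n}\int_{B(x_{0},2^{j+1}R)}|f(y)|\,dy\le C\,\|f\|_{L^{p,\kappa}}\sum_{j\ge1}(2^{j}R)^{\alpha-\frac{n}{p}(1-\kappa)};
\end{equation*}
the same estimate holds for $M_{\alpha}f_{2}(x)$ after running the dyadic argument over the balls $B(x,r)$ with $r>R$. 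The geometric series converges exactly when $\alpha-\frac{n}{p}(1-\kappa)<0$, i.e. $\kappa<1-\alpha p/n=p/q$ (respectively $\kappa<1-\alpha/n=1/q$ when $p=1$), which is precisely the hypothesis, and it then sums to a constant multiple of $R^{\alpha-\frac{n}{p}(1-\kappa)}$. Hence both $|I_{\alpha}f_{2}(x)|$ and $|M_{\alpha}f_{2}(x)|$ are bounded on $B$ by $CR^{\alpha-\frac{n}{p}(1-\kappa)}\|f\|_{L^{p,\kappa}}$, uniformly in $x\in B$; integrating this constant over $B$ and using $n/q=n/p-\alpha$ collapses the exponent $\alpha-\frac{n}{p}(1-\kappa)+\frac{n}{q}$ to $\frac{n\kappa}{p}$, so the global piece again contributes $CR^{n\kappa/p}\|f\|_{L^{p,\kappa}}$ in $L^{q}(B)$ --- and a fortiori in $L^{q,\infty}(B)$ since $\|\cdot\|_{L^{q,\infty}}\le\|\cdot\|_{L^{q}}$ --- with exponent $n\kappa$ in the endpoint case.

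Putting the two pieces together, dividing by the normalizing factor $m(B)^{\kappa^{*}/q}\sim R^{n\kappa^{*}/q}$ and taking the supremum over all balls $B$ gives $\|I_{\alpha}f\|_{L^{q,\kappa^{*}}}\le CR^{n\kappa/p-n\kappa^{*}/q}\|f\|_{L^{p,\kappa}}$ (and likewise for $M_{\alpha}$), and the choice $\kappa^{*}=\kappa q/p$ makes the exponent of $R$ vanish; the endpoint estimate into $WL^{q,\kappa^{*}}$ with $\kappa^{*}=\kappa q$ follows the same way, using the quasi-triangle inequality $\|g+h\|_{L^{q,\infty}}\le 2\bigl(\|g\|_{L^{q,\infty}}+\|h\|_{L^{q,\infty}}\bigr)$ in place of the triangle inequality. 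I expect the only point requiring genuine care to be the exponent bookkeeping: one must verify that the dyadic series converges under exactly the stated range of $\kappa$ and that the powers of $R$ coming from the local piece, the global piece, and the factor $m(B)^{\kappa^{*}/q}$ cancel, which is forced by the two relations $1/q=1/p-\alpha/n$ and $\kappa^{*}=\kappa q/p$ (resp. $\kappa^{*}=\kappa q$). As an alternative for $p>1$ one can bypass the splitting and argue following Hedberg, bounding $|I_{\alpha}f(x)|\le C\delta^{\alpha}Mf(x)+C\delta^{-\beta}\|f\|_{L^{p,\kappa}}$ with $\beta=\frac{n}{p}(1-\kappa)-\alpha>0$, optimizing in $\delta>0$, and finishing with H\"older's inequality on $B$ together with the boundedness of $M$ on $L^{p,\kappa}$; this variant, however, does not cover $p=1$.
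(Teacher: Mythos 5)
Your argument is correct, and the exponent bookkeeping works out exactly as you say: the dyadic series for the far part converges precisely when $\kappa<1-\alpha p/n=p/q$, and the powers of $R$ from the near part, the far part and the normalizing factor $m(B)^{\kappa^{*}/q}$ cancel under $1/q=1/p-\alpha/n$ and $\kappa^{*}=\kappa q/p$ (resp.\ $\kappa^{*}=\kappa q$ when $p=1$). Note, however, that the paper does not prove Theorem A at all: it is quoted from Peetre, and within the paper's scheme it would be recovered as a corollary of the Adams-type Theorem B (or of Theorem \ref{thm2} with $\Omega\equiv1$, $s'=1$) combined with the embedding \eqref{inclusion1} of Remark 1.1, the Adams-type result itself being proved not by a ball decomposition but by the Hedberg--Adams pointwise inequality $|T_{\Omega,\alpha}f(x)|\lesssim[M_{s'}f(x)]^{p/q}[\|f\|_{L^{p,\kappa}}]^{1-p/q}$ together with the Morrey boundedness of $M$ (Lemma \ref{MM}). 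So your local/global splitting (essentially Peetre's original, or Chiarenza--Frasca's, argument) is a genuinely different and more self-contained route: it needs only the Lebesgue-space Hardy--Littlewood--Sobolev bounds and handles the $p=1$ weak endpoint directly, but it yields only the Peetre exponents $(q,\kappa^{*})$; the paper's pointwise-estimate method costs the maximal-function machinery on Morrey spaces but produces the stronger Adams target space $L^{q,\kappa}$ with $1/q=1/p-\alpha/{n(1-\kappa)}$ (from which Theorem A follows by embedding) and extends painlessly to rough kernels $\Omega\in L^{s}(\mathbf S^{n-1})$, which is the point of the paper. Your closing remark is also accurate: the Hedberg-style alternative you sketch is exactly the paper's mechanism and indeed does not reach the $p=1$ weak case without the extra weak-type argument the paper supplies separately.
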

In \cite{adams}, Adams proved that for any $x\in\mathbb R^n$ and $0<\alpha<n$, the following estimate holds.
\begin{equation*}
\begin{split}
|I_{\alpha}f(x)|&\leq C\big[Mf(x)\big]^{p/q}\cdot\big[\|f\|_{L^{p,\kappa}}\big]^{1-p/q},
\end{split}
\end{equation*}
for all $f\in L^{p,\kappa}(\mathbb R^n)$ with $1\leq p<n/{\alpha}$ and $1/q=1/p-\alpha/{[n(1-\kappa)]}$. A simple calculation shows that $M_{\alpha}(f)$ can be controlled pointwise by $I_{\alpha}(|f|)$ for any function $f$. Indeed, for any $0<\alpha<n$, $x\in\mathbb R^n$ and $r>0$, we have
\begin{equation*}
\begin{split}
I_{\alpha}(|f|)(x)&\geq\frac{1}{\gamma(\alpha)}\int_{|y-x|\leq r}\frac{|f(y)|}{|x-y|^{n-\alpha}}\,dy\\
&\geq \frac{1}{\gamma(\alpha)}\cdot\frac{1}{r^{n-\alpha}}\int_{|y-x|\leq r}|f(y)|\,dy.
\end{split}
\end{equation*}
Taking the supremum for all $r>0$ on both sides of the above inequality, we get
\begin{equation*}
M_{\alpha}(f)(x)\leq \frac{\gamma(\alpha)}{(v_n)^{1-\alpha/n}}\cdot I_{\alpha}(|f|)(x),\quad\mbox{for all}\; x\in\mathbb R^n,
\end{equation*}
where $v_n$ is the volume of the unit ball in $\mathbb R^n$ (However, the converse inequality does not hold in general). In view of the above estimates, Adams obtained the following results.
\newtheorem*{thmb}{Theorem B}
\begin{thmb}[\cite{adams}]
Let $0<\alpha<n$ and $1<p<n/{\alpha}$. If $0<\kappa<1-{(\alpha p)}/n$ and $1/q=1/p-\alpha/{[n(1-\kappa)]}$, then the fractional maximal operator $M_\alpha$ and the Riesz potential operator $I_{\alpha}$ are bounded from $L^{p,\kappa}(\mathbb R^n)$ to $L^{q,\kappa}(\mathbb R^n)$. Moreover, if $p=1$ and $0<\kappa<1-\alpha/n$, then the operators $M_{\alpha}$ and $I_{\alpha}$ are bounded from $L^{1,\kappa}(\mathbb R^n)$ to $WL^{q,\kappa}(\mathbb R^n)$ with $1/q:=1-\alpha/{[n(1-\kappa)]}$.
\end{thmb}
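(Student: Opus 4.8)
The plan is to derive Theorem~B entirely from the pointwise inequality of Adams recorded just above, together with the elementary domination $M_{\alpha}(f)\leq C\,I_{\alpha}(|f|)$; the latter reduces every assertion about $M_{\alpha}$ to the corresponding one for $I_{\alpha}$, so it suffices to treat $I_{\alpha}$. The one external ingredient I invoke is the classical boundedness of the Hardy--Littlewood maximal operator $M$ on Morrey spaces (Chiarenza--Frasca): $M$ maps $L^{p,\kappa}(\mathbb R^n)$ to itself for $1<p<\infty$, $0\leq\kappa<1$, and maps $L^{1,\kappa}(\mathbb R^n)$ to $WL^{1,\kappa}(\mathbb R^n)$ for $0\leq\kappa<1$. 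This plays the role that the $L^p$- and weak-$(1,1)$-boundedness of $M$ plays in Hedberg's original argument.

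\emph{Strong type, $1<p<n/\alpha$.} Fix $f\in L^{p,\kappa}(\mathbb R^n)$. Since $0<\kappa<1-\alpha p/n$, the exponent $q$ given by $1/q=1/p-\alpha/[n(1-\kappa)]$ satisfies $p<q<\infty$, hence $0<p/q<1$. Raising the Adams inequality to the power $q$ gives, for a.e.\ $x\in\mathbb R^n$,
\[
|I_{\alpha}f(x)|^{q}\leq C\,[Mf(x)]^{p}\cdot\big[\|f\|_{L^{p,\kappa}}\big]^{q-p}.
\]
Fixing a ball $B$, integrating over $B$, dividing by $m(B)^{\kappa}$, and then using the self-boundedness of $M$ on $L^{p,\kappa}$,
\[
\frac{1}{m(B)^{\kappa}}\int_{B}|I_{\alpha}f(x)|^{q}\,dx\leq C\big[\|f\|_{L^{p,\kappa}}\big]^{q-p}\,\|Mf\|_{L^{p,\kappa}}^{p}\leq C\big[\|f\|_{L^{p,\kappa}}\big]^{q}.
\]
Taking the $1/q$-th power and the supremum over all balls $B$ gives $\|I_{\alpha}f\|_{L^{q,\kappa}}\leq C\|f\|_{L^{p,\kappa}}$, and then $M_{\alpha}(f)\leq C\,I_{\alpha}(|f|)$ yields the same bound for $M_{\alpha}$.

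\emph{Weak type, $p=1$.} Now $0<\kappa<1-\alpha/n$ and $1/q=1-\alpha/[n(1-\kappa)]$, so $1<q<\infty$. The Adams inequality with $p=1$ reads $|I_{\alpha}f(x)|\leq C\,[Mf(x)]^{1/q}\big[\|f\|_{L^{1,\kappa}}\big]^{1-1/q}$, whence for every $\lambda>0$,
\[
\big\{x\in B:|I_{\alpha}f(x)|>\lambda\big\}\subseteq\Big\{x\in B:\ Mf(x)>\big(\lambda/(C\|f\|_{L^{1,\kappa}}^{1-1/q})\big)^{q}\Big\}.
\]
Applying the $L^{1,\kappa}\to WL^{1,\kappa}$ bound for $M$ to the right-hand set, and simplifying the powers of $\lambda$ and of $\|f\|_{L^{1,\kappa}}$ that appear, one gets $\lambda\,m\big(\{x\in B:|I_{\alpha}f(x)|>\lambda\}\big)^{1/q}\leq C\,m(B)^{\kappa/q}\,\|f\|_{L^{1,\kappa}}$, uniformly in $\lambda$ and $B$; dividing by $m(B)^{\kappa/q}$ and taking suprema gives exactly $\|I_{\alpha}f\|_{WL^{q,\kappa}}\leq C\|f\|_{L^{1,\kappa}}$. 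The bound for $M_{\alpha}$ again follows from the pointwise domination.

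Almost all of this is bookkeeping; the one place that needs care, and which I would single out as the crux, is the exponent matching. One must use the Adams relation $1/q=1/p-\alpha/[n(1-\kappa)]$ rather than the Hardy--Littlewood--Sobolev relation $1/q=1/p-\alpha/n$ when choosing $q$, and then check that the factor $[Mf]^{p}$ arising from $[\,\cdot\,]^{p/q}$ is integrated against exactly the normalization $m(B)^{-\kappa}$ that turns it into $\|Mf\|_{L^{p,\kappa}}^{p}$, so that the estimate closes and the target Morrey index remains $\kappa$ (and not $\kappa q/p$ as in Theorem~A). It is also worth isolating the equivalence $0<\kappa<1-\alpha p/n\iff p<q<\infty$ as the precise range in which $q$ is well-defined and finite.
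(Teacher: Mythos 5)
Your proposal is correct and follows essentially the same route the paper takes: it quotes the Adams pointwise estimate and the domination $M_{\alpha}(f)\leq C\,I_{\alpha}(|f|)$, and its proof of the generalized result (Theorem \ref{thm2}) is exactly your argument with $M$ replaced by $M_{s'}$ and Lemma \ref{MM} playing the role of the Chiarenza--Frasca bound; your version is the specialization $\Omega\equiv 1$, $s'=1$. The exponent bookkeeping you single out (using $1/q=1/p-\alpha/[n(1-\kappa)]$ so the target index stays $\kappa$) matches the paper's computation, so there is nothing to add.
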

\begin{rem}
It is worth pointing out that Theorem B is an improvement of Theorem A. It can be shown that for $(q,\kappa)\in(1,\infty)\times(0,1)$, the inclusion relation
\begin{equation}\label{inclusion1}
L^{q,\kappa}(\mathbb R^n)\hookrightarrow L^{q^{*},\kappa^{*}}(\mathbb R^n)
\end{equation}
holds for $q^{*}<q$ and $1-\kappa^{*}=(1-\kappa)(q^{*}/q)$, and $\|f\|_{L^{q^{*},\kappa^{*}}}\leq\|f\|_{L^{q,\kappa}}$. Indeed, by H\"{o}lder's inequality, we know that for any ball $B$ in $\mathbb R^n$ and $1\leq q^{*}<q<\infty$,
\begin{equation*}
\begin{split}
&\bigg(\frac{1}{m(B)^{\kappa^{*}}}\int_B|f(x)|^{q^*}\,dx\bigg)^{1/{q^*}}\\
&=m(B)^{{(1-\kappa^*)}/{q^*}}\bigg(\frac{1}{m(B)}\int_B|f(x)|^{q^*}\,dx\bigg)^{1/{q^*}}\\
&\leq m(B)^{{(1-\kappa^*)}/{q^*}}\bigg(\frac{1}{m(B)}\int_B|f(x)|^{q}\,dx\bigg)^{1/{q}}\\
&=\bigg(\frac{1}{m(B)^{\kappa}}\int_B|f(x)|^{q}\,dx\bigg)^{1/{q}},
\end{split}
\end{equation*}
where in the last equality we have used the fact that $\kappa/q=1/q-{(1-\kappa^*)}/{q^*}$. This gives \eqref{inclusion1}. Moreover, using H\"{o}lder's inequality for weak $L^q$ spaces and repeating the argument above, we can also obtain
\begin{equation}\label{inclusion2}
WL^{q,\kappa}(\mathbb R^n)\hookrightarrow WL^{q^{*},\kappa^{*}}(\mathbb R^n)
\end{equation}
whenever $1\leq q^{*}<q<\infty$ and $1-\kappa^{*}=(1-\kappa)(q^{*}/q)$. Thus, by the conditions of Theorem B, we can see that for $f\in L^{p,\kappa}(\mathbb R^n)$,
\begin{equation*}
I_{\alpha}(f)(\mathrm{or}~ M_{\alpha}(f))\in L^{q,\kappa}(\mathbb R^n)\Longrightarrow
I_{\alpha}(f)(\mathrm{or}~ M_{\alpha}(f))\in L^{q^{*},\kappa^{*}}(\mathbb R^n)
\end{equation*}
whenever $1/{q^*}=1/p-\alpha/n$ and $\kappa^{*}={(\kappa q^{*})}/p$, and for $f\in L^{1,\kappa}(\mathbb R^n)$,
\begin{equation*}
I_{\alpha}(f)(\mathrm{or}~ M_{\alpha}(f))\in WL^{q,\kappa}(\mathbb R^n)\Longrightarrow
I_{\alpha}(f)(\mathrm{or}~ M_{\alpha}(f))\in WL^{q^{*},\kappa^{*}}(\mathbb R^n)
\end{equation*}
whenever $q^{*}=n/{(n-\alpha)}$ and $\kappa^{*}=\kappa {q^*}$.
\end{rem}
We consider fractional integral operators with homogeneous kernels on Morrey spaces and give the following result, which has been proved by Lu, Yang and Zhou in \cite{lu2}(see also \cite{mizuhara} and \cite{wang} for the weighted case).
\newtheorem*{thmc}{Theorem C}
\begin{thmc}[\cite{lu2}]
Suppose that $\Omega\in L^s(\mathbf{S}^{n-1})$ with $1<s\leq\infty$. Let $0<\alpha<n$, $s'<p<n/{\alpha}$ and $1/q=1/p-{\alpha}/n$. If $0<\kappa<p/q$, then the fractional integral operator $T_{\Omega,\alpha}$ with homogeneous kernel is bounded from $L^{p,\kappa}(\mathbb R^n)$ to $L^{q,\kappa^{*}}(\mathbb R^n)$ with $\kappa^{*}={(\kappa q)}/p$.
\end{thmc}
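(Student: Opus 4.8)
To prove Theorem C I would run the standard Morrey ball-decomposition argument, using as an input the strong $(p,q)$ boundedness of $T_{\Omega,\alpha}$ on Lebesgue spaces (the Muckenhoupt--Wheeden theorem reproved earlier in the paper). Fix an arbitrary ball $B=B(x_0,r)$, put $2B=B(x_0,2r)$, and split $f=f_1+f_2$ with $f_1:=f\cdot\chi_{2B}$ and $f_2:=f\cdot\chi_{(2B)^{\complement}}$. By sublinearity it suffices to bound $m(B)^{-\kappa^{*}/q}\|T_{\Omega,\alpha}f_1\|_{L^q}$ and $m(B)^{-\kappa^{*}/q}\|(T_{\Omega,\alpha}f_2)\cdot\chi_B\|_{L^q}$ by $C\|\Omega\|_{L^s}\|f\|_{L^{p,\kappa}}$ with $C$ independent of $B$, and then take the supremum over all balls. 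For the local piece the strong $(p,q)$ estimate gives $\|T_{\Omega,\alpha}f_1\|_{L^q}\leq C\|\Omega\|_{L^s}\|f\cdot\chi_{2B}\|_{L^p}\leq C\|\Omega\|_{L^s}\,m(2B)^{\kappa/p}\|f\|_{L^{p,\kappa}}$; since $\kappa^{*}={(\kappa q)}/p$ one has $\kappa^{*}/q=\kappa/p$, so $m(B)^{-\kappa^{*}/q}m(2B)^{\kappa/p}$ is a dimensional constant. This scaling identity is exactly what forces the exponent $\kappa^{*}={(\kappa q)}/p$ in the conclusion.

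For the global piece, fix $x\in B$. Since $|x-y|\geq\tfrac12|y-x_0|$ when $y\in(2B)^{\complement}$, I decompose $(2B)^{\complement}$ into the dyadic shells $\{y:2^{k}r\leq|y-x_0|<2^{k+1}r\}$, $k\geq1$, on which $|x-y|\sim2^{k}r$, obtaining
\begin{equation*}
\big|T_{\Omega,\alpha}f_2(x)\big|\leq C\sum_{k\geq1}(2^{k}r)^{\alpha-n}\int_{|y-x_0|<2^{k+1}r}|\Omega(x-y)|\,|f(y)|\,dy .
\end{equation*}
Applying H\"older's inequality with exponents $p'$ and $p$, enlarging the domain of the $\Omega$-integral to a ball centered at $x$, and using that $s'<p$ (i.e.\ $\Omega\in L^{p'}(\mathbf{S}^{n-1})$) together with polar coordinates and the homogeneity of $\Omega$,
\begin{equation*}
\bigg(\int_{|y-x_0|<2^{k+1}r}|\Omega(x-y)|^{p'}\,dy\bigg)^{1/{p'}}\leq C\,(2^{k}r)^{n/{p'}}\,\|\Omega\|_{L^{p'}(\mathbf{S}^{n-1})}\leq C\,(2^{k}r)^{n/{p'}}\,\|\Omega\|_{L^{s}(\mathbf{S}^{n-1})},
\end{equation*}
while the Morrey condition gives $\big(\int_{|y-x_0|<2^{k+1}r}|f(y)|^{p}\,dy\big)^{1/p}\leq C(2^{k}r)^{n\kappa/p}\|f\|_{L^{p,\kappa}}$. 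Hence the $k$-th summand is $\leq C\|\Omega\|_{L^s}\|f\|_{L^{p,\kappa}}(2^{k}r)^{\,\alpha-n+n/{p'}+n\kappa/p}$, and since
\begin{equation*}
\alpha-n+\frac{n}{p'}+\frac{n\kappa}{p}=\alpha-\frac{n}{p}+\frac{n\kappa}{p}=n\Big(\frac{\kappa}{p}-\frac1q\Big)<0
\end{equation*}
exactly because $\kappa<p/q$, the series converges and $|T_{\Omega,\alpha}f_2(x)|\leq C\|\Omega\|_{L^s}\|f\|_{L^{p,\kappa}}\,r^{n(\kappa/p-1/q)}$ for every $x\in B$. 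Integrating this constant over $B$ and invoking $m(B)^{1/q}r^{n(\kappa/p-1/q)}=c\,r^{n\kappa/p}$ and $m(B)^{\kappa^{*}/q}=c\,r^{n\kappa/p}$ closes the bound for the global piece.

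The main obstacle is the rough homogeneous factor $\Omega$: for the Riesz potential $I_{\alpha}$ the H\"older step on the far piece is trivial, whereas here one must know $\Omega\in L^{p'}(\mathbf{S}^{n-1})$ — which is precisely the content of the hypothesis $s'<p$ — and then convert the $L^{s}$ datum on the sphere into a volume bound via the polar-coordinate identity $\int_{|z|<R}|\Omega(z)|^{p'}\,dz\leq C R^{n}\|\Omega\|_{L^{p'}(\mathbf{S}^{n-1})}^{p'}$. After that, everything is bookkeeping of homogeneity exponents, and the restriction $0<\kappa<p/q$ is exactly the summability threshold for the dyadic series.

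An alternative and slightly shorter route bypasses the splitting: one first proves the Hedberg--Adams-type pointwise inequality
\begin{equation*}
\big|T_{\Omega,\alpha}f(x)\big|\leq C\,\|\Omega\|_{L^s}\,\big[M_{s'}f(x)\big]^{p/{\widetilde q}}\,\big[\|f\|_{L^{p,\kappa}}\big]^{1-p/{\widetilde q}},\qquad\frac{1}{\widetilde q}=\frac1p-\frac{\alpha}{n(1-\kappa)},
\end{equation*}
by optimizing the split of the kernel at a radius $\delta$ (near part controlled by $M_{s'}f(x)$ via H\"older with exponents $s$ and $s'$, far part controlled by $\|f\|_{L^{p,\kappa}}$ via H\"older with exponents $p'$ and $p$, using $\Omega\in L^{p'}(\mathbf{S}^{n-1})$). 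Since $p>s'$, the operator $M_{s'}$ is bounded on $L^{p,\kappa}$ (because $M$ is bounded on Morrey spaces), so this pointwise bound yields $T_{\Omega,\alpha}\colon L^{p,\kappa}\to L^{\widetilde q,\kappa}$, and then the embedding $L^{\widetilde q,\kappa}\hookrightarrow L^{q,\kappa^{*}}$ coming from \eqref{inclusion1} — valid here since $q<\widetilde q$ and $1-\kappa^{*}=(1-\kappa)(q/{\widetilde q})$ when $\kappa^{*}={(\kappa q)}/p$ — places $T_{\Omega,\alpha}f$ in $L^{q,\kappa^{*}}$ with the required norm bound.
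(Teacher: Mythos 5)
Your proof is correct, but it is worth noting how it sits relative to the paper: the paper never proves Theorem C directly (it is quoted from \cite{lu2}); instead the paper establishes the sharper Adams-type bound of Theorem \ref{thm2} via the pointwise inequality \eqref{wanghua2} and then observes, through the embedding \eqref{inclusion1}, that Theorem C follows as a weaker consequence. That is exactly your ``alternative route,'' and your verification there is accurate: with $\kappa^{*}=\kappa q/p$ one indeed has $1-\kappa^{*}=(1-\kappa)q/{\widetilde q}$ and $q<\widetilde q$, the hypothesis $0<\kappa<p/q$ coincides with $0<\kappa<1-\alpha p/n$, and the $L^{p,\kappa}$-boundedness of $M_{s'}$ follows from Lemma \ref{MM} since $p/{s'}>1$. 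Your primary argument, by contrast, is a genuinely different and self-contained route of the kind used in the cited reference: split $f$ at $2B$, control the local piece by the Lebesgue $(p,q)$ theorem (Theorem \ref{thm1}(ii)) together with the scaling identity $\kappa^{*}/q=\kappa/p$, and control the far piece on dyadic shells by H\"older in $(p',p)$, the polar-coordinate bound \eqref{omega88} applied with exponent $p'$ (legitimate since $s'<p$ means $\Omega\in L^{p'}(\mathbf{S}^{n-1})$), and the Morrey datum; your exponent bookkeeping $\alpha-n+n/{p'}+n\kappa/p=n(\kappa/p-1/q)<0$ is correct and shows transparently that $\kappa<p/q$ is precisely the convergence threshold. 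What each approach buys: the direct splitting needs only the Lebesgue-space input and gives Theorem C with minimal machinery, whereas the paper's route costs the extra pointwise estimate \eqref{wanghua2} but lands in the strictly smaller space $L^{\widetilde q,\kappa}$ with $\widetilde q>q$, which is the improvement the paper emphasizes over Theorem C.
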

Using the same argument as above, we can also show that $M_{\Omega,\alpha}(f)$ can be controlled pointwise by $T_{\Omega,\alpha}(|f|)$ for an arbitrary function $f$ and $0<\alpha<n$. Hence, the same conclusion of Theorem C also holds for the fractional maximal operator $M_{\Omega,\alpha}$.
Theorem C extends the result of Peetre for the Riesz potential operators.

Motivated by Theorems B and C, it is natural to ask whether the result of Adams holds for the operators $M_{\Omega,\alpha}$ and $T_{\Omega,\alpha}$, provided that $\Omega\in L^s(\mathbf{S}^{n-1})$ with $1<s\leq\infty$. Our aim in this paper is to prove the homogeneous fractional integral operators related to the Adams inequality on Morrey spaces. Following the same arguments as in \cite{adams,adams1}, we will prove
\begin{equation*}
\begin{split}
\big|T_{\Omega,\alpha}f(x)\big|&\leq C\big[M_{s'}f(x)\big]^{p/q}\cdot\big[\|f\|_{L^{p,\kappa}}\big]^{1-p/q},\quad x\in\mathbb R^n,
\end{split}
\end{equation*}
for any $f\in L^{p,\kappa}(\mathbb R^n)$ with $s'\leq p<n/{\alpha}$ and $1/q=1/p-\alpha/{n(1-\kappa)}$. From this and the boundedness of $M$, the desired estimate follows. This improves previous results of Theorem C. Moreover, some new estimates in the limiting cases are also discussed. Finally, we will apply our main results to several well-known inequalities on Euclidean space $\mathbb R^n$ such as Hardy--Littlewood--Sobolev and Olsen-type inequalities.

In this article, $C>0$ denotes a universal constant which is independent of the main parameters and may change from line to line. The symbol $\mathbf{X}\lesssim \mathbf{Y}$ means that $\mathbf{X}\leq C\mathbf{Y}$.
\section{Main results}
\label{sec2}
In this section, we will establish boundedness properties of homogeneous fractional integral operators $T_{\Omega,\alpha}$ and fractional maximal operators $M_{\Omega,\alpha}$ on Lebesgue and Morrey spaces.

\subsection{Boundedness of $T_{\Omega,\alpha}$ and $M_{\Omega,\alpha}$ on Lebesgue spaces}
\label{sec21}
\begin{thm}\label{thm1}
Let $\Omega\in L^s(\mathbf{S}^{n-1})$ with $1<s\leq\infty$. Let $0<\alpha<n$, $1\leq p<q<\infty$ and $1/q=1/p-\alpha/n$.

$(i)$ If $s'=p$, then for every $p<n/{\alpha}$, the inequality
\begin{equation*}
\|T_{\Omega,\alpha}(f)\|_{WL^q}\lesssim\|f\|_{L^p}
\end{equation*}
holds.

$(ii)$ If $s'<p$, then for every $p\in(s',n/{\alpha})$, the inequality
\begin{equation*}
\|T_{\Omega,\alpha}(f)\|_{L^q}\lesssim\|f\|_{L^p}
\end{equation*}
holds.
\end{thm}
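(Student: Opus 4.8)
The plan is to reduce everything to the classical Hardy–Littlewood–Sobolev and weak-type estimates for the Riesz potential $I_\alpha$ (equivalently, for the non-homogeneous fractional integral) via the pointwise Hedberg-type domination announced in the introduction,
\begin{equation*}
|T_{\Omega,\alpha}f(x)|\leq C\,\big[\|f\|_{L^p}\big]^{1-p/q}\,\big[M_{s'}f(x)\big]^{p/q},\qquad x\in\mathbb R^n,
\end{equation*}
where $M_{s'}f=[M(|f|^{s'})]^{1/s'}$. First I would establish this inequality. Fix $x$ and split the integral defining $T_{\Omega,\alpha}f(x)$ into the near part $|x-y|<\delta$ and the far part $|x-y|\geq\delta$ for a parameter $\delta>0$ to be chosen. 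For the near part, decompose the region into dyadic annuli $2^{-k-1}\delta\leq |x-y|<2^{-k}\delta$, $k\geq 0$; on each annulus apply Hölder's inequality in the form that pairs $\Omega(x-y)$ (in $L^s$ on the sphere, hence, after integrating the radial variable, an $L^s$ function on the annulus with the right normalization) against $f$, picking up a factor involving $\|\Omega\|_{L^s(\mathbf S^{n-1})}$ and $\big(\int_{|x-y|<2^{-k}\delta}|f(y)|^{s'}dy\big)^{1/s'}$; summing the resulting geometric series in $k$ (this converges precisely because $\alpha>0$) yields a bound $C\,\delta^{\alpha}\,M_{s'}f(x)$. For the far part, Hölder's inequality with exponents $p$ and $p'$ gives $C\,\delta^{\alpha-n/p}\,\|f\|_{L^p}$, where the integral $\int_{|x-y|\geq\delta}|x-y|^{(\alpha-n)p'}dy$ is finite exactly because $p<n/\alpha$ (and in part (i) we also need $p<n/\alpha$, which is the hypothesis $s'=p<n/\alpha$). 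Adding the two pieces and optimizing in $\delta$—the choice $\delta=\big(\|f\|_{L^p}/M_{s'}f(x)\big)^{p/n}$—produces the displayed pointwise inequality after checking that the exponents $1-p/q$ and $p/q$ come out right using $1/q=1/p-\alpha/n$.

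Granting the pointwise bound, part $(ii)$ is immediate: raise both sides to the $q$-th power, integrate, and use
\begin{equation*}
\int_{\mathbb R^n}|T_{\Omega,\alpha}f(x)|^q\,dx\leq C\,\big[\|f\|_{L^p}\big]^{(1-p/q)q}\int_{\mathbb R^n}\big[M_{s'}f(x)\big]^{p}\,dx
= C\,\big[\|f\|_{L^p}\big]^{q-p}\,\big\|M_{s'}f\big\|_{L^p}^{p}.
\end{equation*}
Since $s'<p$, the Hardy–Littlewood maximal operator $M$ is bounded on $L^{p/s'}(\mathbb R^n)$ (here $p/s'>1$), so $\|M_{s'}f\|_{L^p}=\|M(|f|^{s'})\|_{L^{p/s'}}^{1/s'}\lesssim \||f|^{s'}\|_{L^{p/s'}}^{1/s'}=\|f\|_{L^p}$; substituting gives $\|T_{\Omega,\alpha}f\|_{L^q}^q\lesssim \|f\|_{L^p}^{q-p}\|f\|_{L^p}^{p}=\|f\|_{L^p}^{q}$, as desired.

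For part $(i)$, where $s'=p$ (so $p/s'=1$ and $M$ is only of weak type $(1,1)$), the same pointwise inequality still holds, and for any $\lambda>0$,
\begin{equation*}
\big\{x:|T_{\Omega,\alpha}f(x)|>\lambda\big\}\subseteq\Big\{x:M_{s'}f(x)>\big(\lambda\,/\,C\|f\|_{L^p}^{1-p/q}\big)^{q/p}\Big\},
\end{equation*}
so it suffices to control the measure of the right-hand set. Writing $M_{s'}f(x)>t$ as $M(|f|^{s'})(x)>t^{s'}$ and applying the weak $(1,1)$ bound for $M$ gives $m\{M(|f|^{s'})>t^{s'}\}\lesssim t^{-s'}\||f|^{s'}\|_{L^1}=t^{-p}\|f\|_{L^p}^{p}$; plugging in $t=\big(\lambda/C\|f\|_{L^p}^{1-p/q}\big)^{q/p}$ and simplifying the powers using $1/q=1/p-\alpha/n$ yields $\lambda^q\,m\{|T_{\Omega,\alpha}f|>\lambda\}\lesssim \|f\|_{L^p}^{q}$, which is the weak-type $(p,q)$ estimate. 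I expect the main obstacle to be the near-part estimate in the Hedberg inequality: getting the Hölder pairing against $\Omega\in L^s(\mathbf S^{n-1})$ to produce exactly $\delta^\alpha M_{s'}f(x)$ requires care with the polar-coordinate change of variables and with summing over the dyadic annuli so that the geometric ratio is $2^{-\alpha}<1$; everything after that is bookkeeping with the exponent relation $1/q=1/p-\alpha/n$.
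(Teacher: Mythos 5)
Your proposal follows essentially the same route as the paper: the Hedberg-type pointwise bound $|T_{\Omega,\alpha}f(x)|\lesssim \big[\|f\|_{L^p}\big]^{1-p/q}\big[M_{s'}f(x)\big]^{p/q}$ via the near/far splitting with dyadic annuli and optimization in the splitting radius, followed for part $(ii)$ by the $L^{p/s'}$-boundedness of $M$ and for part $(i)$ by its weak $(1,1)$ bound applied to $|f|^{s'}=|f|^p$. The only slip is in the far part, where the H\"older dual integral must carry the kernel, i.e.\ $\int_{|x-y|\geq\delta}|\Omega(x-y)|^{p'}|x-y|^{(\alpha-n)p'}\,dy$, which is finite because $\Omega\in L^{p'}(\mathbf{S}^{n-1})$ (since $p'\leq s$, equivalently $s'\leq p$) and $p<n/\alpha$ --- exactly as the paper handles it.
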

\begin{proof}[Proof of Theorem \ref{thm1}]
The idea of the proof is due to Hedberg \cite{hed}. For given $L^p(\mathbb R^n)$ with $1\leq p<n/{\alpha}$, he decomposes the integral defining $I_{\alpha}$ into two parts, one over $\{x:|x-y|<\sigma\}$ and the other over $\{x:|x-y|\geq \sigma\}$. Using the definition of the maximal operator $M$ to the first part and applying the H\"{o}lder inequality to the second. Now choose $\sigma>0$ so that both terms are the same size. Then the following inequality is obtained.
\begin{equation*}
|I_{\alpha}f(x)|\leq C\big[\|f\|_{L^p}\big]^{1-p/q}\cdot\big[Mf(x)\big]^{p/q},\quad x\in\mathbb R^n.
\end{equation*}
Arguing as in \cite{hed}, we will prove the following result.
\begin{equation}\label{wanghua1}
\begin{split}
\big|T_{\Omega,\alpha}f(x)\big|\lesssim\big[\|f\|_{L^p}\big]^{1-p/q}\cdot\big[M_{s'}f(x)\big]^{p/q},\quad x\in\mathbb R^n.
\end{split}
\end{equation}
Let $f\in L^{p}(\mathbb R^n)$ with $s'\leq p<n/{\alpha}$. In order to prove \eqref{wanghua1}, one can write
\begin{equation*}
\begin{split}
\big|T_{\Omega,\alpha}f(x)\big|&\leq \int_{\mathbb R^n}\frac{|\Omega(x-y)|}{|x-y|^{n-\alpha}}|f(y)|\,dy\\
&=\int_{|x-y|<\sigma}\frac{|\Omega(x-y)|}{|x-y|^{n-\alpha}}|f(y)|\,dy
+\int_{|x-y|\geq\sigma}\frac{|\Omega(x-y)|}{|x-y|^{n-\alpha}}|f(y)|\,dy\\
&:=\mathrm{I+II},
\end{split}
\end{equation*}
where $\sigma$ is a positive constant to be fixed below. Let us analyze the first term $\mathrm{I}$. For any $x\in\mathbb{R}^n$, we observe that
\begin{equation*}
\begin{split}
\mathrm{I}&=\sum_{j=1}^\infty\int_{2^{-j}\sigma\leq|x-y|<2^{-j+1}\sigma}\frac{|\Omega(x-y)|}{|x-y|^{n-\alpha}}|f(y)|\,dy\\
&\leq\sum_{j=1}^\infty\frac{1}{(2^{-j}\sigma)^{n-\alpha}}\int_{2^{-j}\sigma\leq|x-y|<2^{-j+1}\sigma}|\Omega(x-y)|\cdot|f(y)|\,dy.
\end{split}
\end{equation*}
For any $\mathcal{R}>0$ and $\Omega\in L^s(\mathbf{S}^{n-1})$, we first establish the following estimate, which will be often used in the sequel.
\begin{equation}\label{omega88}
\bigg(\int_{|x-y|<\mathcal{R}}|\Omega(x-y)|^s\,dy\bigg)^{1/s}\leq C\|\Omega\|_{L^s(\mathbf{S}^{n-1})}\mathcal{R}^{n/s}.
\end{equation}
To this end, using polar coordinates, we obtain
\begin{equation*}
\begin{split}
\bigg(\int_{|x-y|<\mathcal{R}}|\Omega(x-y)|^s\,dy\bigg)^{1/s}
&=\bigg(\int_{|y|<\mathcal{R}}|\Omega(y)|^{s}\,dy\bigg)^{1/s}\\
&=\bigg(\int_0^{\mathcal{R}}\int_{\mathbf{S}^{n-1}}|\Omega(y')|^{s}\varrho^{n-1}\,d\sigma(y')d\varrho\bigg)^{1/s}\\
&\leq C\|\Omega\|_{L^s(\mathbf{S}^{n-1})}\mathcal{R}^{n/s},
\end{split}
\end{equation*}
as desired. Here $y'=y/{|y|}$ for any $\mathbb R^n\ni y\neq0$. A combination of H\"{o}lder's inequality and \eqref{omega88} gives that for each fixed $j\geq1$,
\begin{equation*}
\begin{split}
&\int_{2^{-j}\sigma\leq|x-y|<2^{-j+1}\sigma}|\Omega(x-y)|\cdot|f(y)|\,dy\\
&\leq\bigg(\int_{|x-y|<2^{-j+1}\sigma}|\Omega(x-y)|^s\,dy\bigg)^{1/s}\bigg(\int_{|x-y|<2^{-j+1}\sigma}|f(y)|^{s'}\,dy\bigg)^{1/{s'}}\\
&\leq C\|\Omega\|_{L^s(\mathbf{S}^{n-1})}\big(2^{-j+1}\sigma\big)^{n/s}
\bigg(\int_{B(x,2^{-j+1}\sigma)}|f(y)|^{s'}\,dy\bigg)^{1/{s'}}\\
&=C\|\Omega\|_{L^s(\mathbf{S}^{n-1})}\big(2^{-j+1}\sigma\big)^{n/s}\cdot\big(2^{-j+1}\sigma\big)^{n/{s'}}\\
&\times\bigg(\frac{1}{|B(x,2^{-j+1}\sigma)|}\int_{B(x,2^{-j+1}\sigma)}|f(y)|^{s'}\,dy\bigg)^{1/{s'}}\\
&\leq C\|\Omega\|_{L^s(\mathbf{S}^{n-1})}\big(2^{-j+1}\sigma\big)^{n}M_{s'}f(x),
\end{split}
\end{equation*}
where the maximal operator $M_{s'}$ is given by
\begin{equation*}
M_{s'}f(x):=\big[M(|f|^{s'})(x)\big]^{1/{s'}},\quad x\in\mathbb R^n.
\end{equation*}
Note that $\alpha>0$. Hence
\begin{equation*}
\begin{split}
\mathrm{I}&\lesssim\sum_{j=1}^\infty\frac{1}{(2^{-j}\sigma)^{n-\alpha}}\cdot\big(2^{-j}\sigma\big)^{n}M_{s'}f(x)\\
&=\sum_{j=1}^\infty\frac{1}{(2^{-j}\sigma)^{-\alpha}}M_{s'}f(x)\lesssim\sigma^{\alpha}M_{s'}f(x).
\end{split}
\end{equation*}
On the other hand,
\begin{equation*}
\begin{split}
\mathrm{II}&=\sum_{j=1}^\infty\int_{2^{j-1}\sigma\leq|x-y|<2^j\sigma}\frac{|\Omega(x-y)|}{|x-y|^{n-\alpha}}|f(y)|\,dy\\
&\leq\sum_{j=1}^\infty\frac{1}{(2^{j-1}\sigma)^{n-\alpha}}\int_{2^{j-1}\sigma\leq|x-y|<2^j\sigma}|\Omega(x-y)|\cdot|f(y)|\,dy.
\end{split}
\end{equation*}
Since $\Omega\in L^s(\mathbf{S}^{n-1})$ with $s\geq p'$, then $\Omega\in L^{p'}(\mathbf{S}^{n-1})$, and
\begin{equation*}
\|\Omega\|_{L^{p'}(\mathbf{S}^{n-1})}\leq C\|\Omega\|_{L^s(\mathbf{S}^{n-1})}.
\end{equation*}
This estimate, along with H\"{o}lder's inequality and \eqref{omega88}, yields
\begin{equation*}
\begin{split}
&\int_{2^{j-1}\sigma\leq|x-y|<2^j\sigma}|\Omega(x-y)|\cdot|f(y)|\,dy\\
&\leq\bigg(\int_{|x-y|<2^j\sigma}|\Omega(x-y)|^{p'}\,dy\bigg)^{1/{p'}}\bigg(\int_{|x-y|<2^j\sigma}|f(y)|^p\,dy\bigg)^{1/p}\\
&\leq C\big(2^j\sigma\big)^{n/{p'}}\|f\|_{L^p},
\end{split}
\end{equation*}
for each integer $j\geq1$. Therefore,
\begin{equation*}
\begin{split}
\mathrm{II}&\lesssim\sum_{j=1}^\infty\frac{1}{(2^{j}\sigma)^{n-\alpha}}\cdot\big(2^j\sigma\big)^{n/{p'}}\|f\|_{L^p}\\
&=\sum_{j=1}^\infty\frac{1}{(2^{j}\sigma)^{n/p-\alpha}}\|f\|_{L^p}\lesssim \sigma^{\alpha-n/p}\|f\|_{L^p},
\end{split}
\end{equation*}
where the last series is convergent since $n/p-\alpha>0$. Summing up the above estimates for $\mathrm{I}$ and $\mathrm{II}$, for any $x\in\mathbb R^n$, we conclude that
\begin{equation}\label{main1}
\big|T_{\Omega,\alpha}f(x)\big|\lesssim\Big[\sigma^{\alpha}M_{s'}f(x)+\sigma^{\alpha-n/p}\|f\|_{L^p}\Big].
\end{equation}
We now choose $\sigma>0$ such that
\begin{equation*}
\sigma^{\alpha}M_{s'}f(x)=\sigma^{\alpha-n/p}\|f\|_{L^p}.
\end{equation*}
That is,
\begin{equation*}
\sigma^{n/p}=\frac{\|f\|_{L^p}}{M_{s'}f(x)}.
\end{equation*}
Putting this value of $\sigma$ back into \eqref{main1} and noting that $1-{\alpha p}/n=p/q$, we get
\begin{equation*}
\begin{split}
\big|T_{\Omega,\alpha}f(x)\big|\lesssim\sigma^{\alpha}M_{s'}f(x)&=\Big[\frac{\|f\|_{L^p}}{M_{s'}f(x)}\Big]^{{\alpha p}/n}\cdot M_{s'}f(x)\\
&=\big[M_{s'}f(x)\big]^{p/q}\cdot\big[\|f\|_{L^p}\big]^{1-p/q}.
\end{split}
\end{equation*}
Thus, \eqref{wanghua1} holds. The conclusion of Theorem \ref{thm1} then follows from \eqref{wanghua1} and the boundedness of $M$.
\begin{itemize}
  \item If $s'<p<n/{\alpha}$, then we have
\begin{equation*}
\begin{split}
\|T_{\Omega,\alpha}(f)\|_{L^q}&\lesssim\bigg(\int_{\mathbb R^n}\big|M_{s'}f(x)\big|^{p}dx\bigg)^{1/q} \cdot\big[\|f\|_{L^p}\big]^{1-p/q}\\
&=\bigg(\int_{\mathbb R^n}\big|M(|f|^{s'})(x)\big|^{p/{s'}}dx\bigg)^{1/q}\cdot\big[\|f\|_{L^p}\big]^{1-p/q}.
\end{split}
\end{equation*}
Thus, using the $L^{\frac{p}{s'}}$-boundedness of $M$($1<\frac{p}{s'}<\infty$), we obtain
\begin{equation*}
\begin{split}
\|T_{\Omega,\alpha}(f)\|_{L^q}&\lesssim\bigg(\int_{\mathbb R^n}|f(x)|^{s'\cdot(p/{s'})}dx\bigg)^{1/q}\cdot\big[\|f\|_{L^p}\big]^{1-p/q}\\
&=\big[\|f\|_{L^p}\big]^{p/q}\cdot\big[\|f\|_{L^p}\big]^{1-p/q}=\|f\|_{L^p}.
\end{split}
\end{equation*}
This proves $(ii)$.
  \item If $s'=p<n/{\alpha}$, then for any given $\lambda>0$, we have
\begin{equation*}
\begin{split}
&\lambda\cdot m\big(\big\{x\in\mathbb R^n:\big|T_{\Omega,\alpha}f(x)\big|>\lambda\big\}\big)^{1/q}\\
&=\lambda\cdot m\bigg(\bigg\{x\in\mathbb R^n:\big|M_{s'}f(x)\big|^{p/q}>\frac{\lambda}{C\|f\|^{1-p/q}_{L^p}}\bigg\}\bigg)^{1/q}\\
&=\lambda\cdot m\bigg(\bigg\{x\in\mathbb R^n:\big|M(|f|^{p})(x)\big|>\bigg(\frac{\lambda}{C\|f\|^{1-p/q}_{L^p}}\bigg)^q\bigg\}\bigg)^{1/q}.
\end{split}
\end{equation*}
By using the weak $(1,1)$ boundedness of $M$, the above expression is bounded by
\begin{equation*}
\begin{split}
&C\lambda\cdot\bigg[\bigg(\frac{\|f\|^{1-p/q}_{L^p}}{\lambda}\bigg)^q\int_{\mathbb R^n}|f(x)|^pdx\bigg]^{1/q}\\
&=C\big[\|f\|_{L^p}\big]^{1-p/q}\cdot\big[\|f\|_{L^p}\big]^{p/q}=C\|f\|_{L^p}.
\end{split}
\end{equation*}
This proves $(i)$ by taking the supremum over all $\lambda>0$.
\end{itemize}
The proof of Theorem \ref{thm1} is now complete.
\end{proof}
It is well known that the Hardy--Littlewood maximal operator $M$ is weak $(1,1)$ and strong $(p,p)$, $1<p\leq\infty$. The strong $(1,1)$ inequality is false. In fact, it never holds, as the following result shows (see \cite[p.36]{duoand}).
\begin{prop}
If $f\in L^1(\mathbb R^n)$ and is not identically $0$, then $Mf\notin L^1(\mathbb R^n)$.
\end{prop}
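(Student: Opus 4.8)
The plan is to argue by contradiction, showing that if $f\in L^1(\mathbb R^n)$ is not identically zero then the maximal function $Mf$ already fails to be integrable because of its decay at infinity. First I would fix the normalization: since $f\not\equiv0$, there is a ball $B(0,R)$ on which $\int_{B(0,R)}|f(y)|\,dy=:c>0$. The key observation is a pointwise lower bound for $Mf(x)$ valid for $|x|$ large. Indeed, for $|x|>R$, choosing the radius $r=|x|+R$ in the supremum defining $Mf$, the ball $B(x,r)$ contains $B(0,R)$, hence
\begin{equation*}
Mf(x)\geq\frac{1}{m(B(x,|x|+R))}\int_{B(x,|x|+R)}|f(y)|\,dy\geq\frac{c}{v_n(|x|+R)^n}\gtrsim\frac{c}{|x|^n},
\end{equation*}
where the last estimate uses $|x|+R\leq 2|x|$ for $|x|>R$, and $v_n$ is the volume of the unit ball.

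Next I would integrate this lower bound over the region $\{|x|>R\}$. Using polar coordinates,
\begin{equation*}
\int_{|x|>R}Mf(x)\,dx\gtrsim c\int_{|x|>R}\frac{dx}{|x|^n}=c\,|\mathbf{S}^{n-1}|\int_R^\infty\frac{\varrho^{n-1}}{\varrho^n}\,d\varrho=c\,|\mathbf{S}^{n-1}|\int_R^\infty\frac{d\varrho}{\varrho},
\end{equation*}
which diverges. Since $\int_{\mathbb R^n}Mf\geq\int_{|x|>R}Mf=\infty$, we conclude $Mf\notin L^1(\mathbb R^n)$, completing the proof. A minor technical point worth noting is that one should ensure $Mf(x)<\infty$ almost everywhere is not even needed here — we only need the integral to diverge — but for cleanliness one can recall that $M$ is of weak type $(1,1)$, so $Mf$ is finite a.e. and genuinely measurable, making the integral well defined in $[0,\infty]$.

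I do not expect a serious obstacle in this argument; the only point requiring a little care is the geometric containment $B(0,R)\subset B(x,|x|+R)$ and the resulting choice of radius, together with the comparison $|x|+R\lesssim|x|$ which forces us to restrict to $|x|>R$ (or any fixed multiple of $R$). Everything else is the elementary fact that $\int^\infty d\varrho/\varrho$ diverges. One could alternatively phrase the whole thing without contradiction, directly exhibiting the divergent integral, which is in fact what I have sketched; I would present it in that direct form since it is cleaner than a proof by contradiction.
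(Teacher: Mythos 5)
Your argument is correct and complete: the containment $B(0,R)\subset B(x,|x|+R)$ for $|x|>R$ gives the pointwise bound $Mf(x)\gtrsim c\,|x|^{-n}$, whose integral over $\{|x|>R\}$ diverges, and no finiteness of $Mf$ is needed since the integral of a nonnegative measurable function is well defined in $[0,\infty]$. The paper does not prove this proposition but cites it from the literature, and your proof is precisely the standard argument given there, so there is nothing to add.
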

For any given ball $B$ in $\mathbb R^n$, we say that $f\log^{+}f\in L^1(B)$, if
\begin{equation*}
\int_{B}|f(x)|\log^+|f(x)|\,dx<\infty,
\end{equation*}
where $\log^+t=\max\{\log t,0\}$. Nevertheless, we do have the following result regarding local integrability of the maximal function.
\begin{prop}
Let $f$ be an integrable function supported in a ball $B\subset\mathbb R^n$. Then $Mf\in L^1(B)$ if and only if $f\log^{+}f\in L^1(B)$.
\end{prop}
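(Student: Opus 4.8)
The plan is to prove the two implications separately, in each case passing to the distribution function of $Mf$ via the layer-cake formula and feeding in a weak-$(1,1)$-type estimate for the Hardy--Littlewood maximal operator $M$.

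\emph{The implication $f\log^+f\in L^1(B)\Rightarrow Mf\in L^1(B)$.} Here I would write
\begin{equation*}
\|Mf\|_{L^1(B)}=\int_0^{\infty}m\big(\{x\in B:Mf(x)>\lambda\}\big)\,d\lambda
\end{equation*}
and split at $\lambda=2$. The part $0<\lambda<2$ contributes at most $2\,m(B)<\infty$. For $\lambda\ge2$ I would use the sharpened weak $(1,1)$ bound: since $M\big(f\chi_{\{|f|\le\lambda/2\}}\big)\le\lambda/2$, one has $\{Mf>\lambda\}\subseteq\big\{M(f\chi_{\{|f|>\lambda/2\}})>\lambda/2\big\}$, and the ordinary weak $(1,1)$ inequality for $M$ gives $m(\{Mf>\lambda\})\le\frac{C_n}{\lambda}\int_{\{|f|>\lambda/2\}}|f(x)|\,dx$. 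Inserting this and exchanging the order of integration by Tonelli's theorem, the inner $\lambda$-integral contributes $\log|f(x)|$ on $\{|f|>1\}$, so $\int_2^{\infty}m(\{x\in B:Mf>\lambda\})\,d\lambda\le C_n\int_B|f(x)|\log^+|f(x)|\,dx<\infty$.

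\emph{The implication $Mf\in L^1(B)\Rightarrow f\log^+f\in L^1(B)$.} The mechanism is a reverse weak-$(1,1)$ estimate coming from the Calder\'on--Zygmund decomposition of $f$ at height $\lambda$: there are pairwise disjoint dyadic cubes $\{Q_j\}$ with $\lambda<|Q_j|^{-1}\int_{Q_j}|f|\le 2^n\lambda$ and $|f|\le\lambda$ a.e.\ off $\bigcup_jQ_j$, so $\{|f|>\lambda\}\subseteq\bigcup_jQ_j$ up to a null set. Since $B\big(x,\sqrt n\,\ell(Q_j)\big)\supseteq Q_j$ for every $x\in Q_j$, averaging $|f|$ over that ball gives $Mf(x)\ge c_n\lambda$ there, hence $\bigcup_jQ_j\subseteq\{Mf>c_n\lambda\}$ and therefore
\begin{equation*}
\int_{\{|f|>\lambda\}}|f(x)|\,dx\le 2^n\lambda\sum_j|Q_j|\le 2^n\lambda\,m\big(\{Mf>c_n\lambda\}\big).
\end{equation*}
Using $\log^+t=\int_1^{\infty}\lambda^{-1}\chi_{\{t>\lambda\}}\,d\lambda$ and Tonelli, $\|f\log^+f\|_{L^1(B)}=\int_1^{\infty}\lambda^{-1}\int_{\{|f|>\lambda\}}|f|\,d\lambda$; the piece over $1\le\lambda\le\lambda_0$ is at most $\|f\|_{L^1}\log^+\lambda_0$, while the tail over $\lambda\ge\lambda_0$ becomes $2^n\int_{\lambda_0}^{\infty}m(\{Mf>c_n\lambda\})\,d\lambda\le c_n^{-1}2^n\|Mf\|_{L^1}$ after the substitution $\mu=c_n\lambda$.

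\emph{The main obstacle.} The last step is delicate because $Mf$ is \emph{never} in $L^1(\mathbb R^n)$ when $f\not\equiv0$ (this is the preceding proposition), so the norm $\|Mf\|_{L^1}$ above must be read over a bounded set. I would fix $\lambda_0$ to be a sufficiently large multiple of $\|f\|_{L^1}/m(B)$: then every selected cube has $|Q_j|\le\|f\|_{L^1}/\lambda$, hence diameter smaller than the radius of $B$, and since $Q_j$ must meet $\mathrm{supp}\,f\subseteq B$ it lies inside $2B$; thus $\bigcup_jQ_j\subseteq\{x\in 2B:Mf(x)>c_n\lambda\}$ and the tail is controlled by $\|Mf\|_{L^1(2B)}$. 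It then remains to observe that $Mf\in L^1(B)$ already forces $Mf\in L^1(2B)$ when $\mathrm{supp}\,f\subseteq B$: for $x\in 2B\setminus B$ any ball $B(x,r)$ carrying mass of $f$ satisfies $r>\mathrm{dist}(x,B)$ and hence $B(x,r)\subseteq B(x',3r)$, where $x'$ is the radial reflection of $x$ across $\partial B$ into $B$; this gives $Mf(x)\le 3^nMf(x')$, and a change of variables with bounded Jacobian on the half of $2B\setminus B$ nearest $\partial B$, together with the crude bound $Mf\le\|f\|_{L^1}\big(v_n\,\mathrm{dist}(\cdot,B)^n\big)^{-1}$ on the remaining part, yields $\|Mf\|_{L^1(2B\setminus B)}\lesssim\|Mf\|_{L^1(B)}+\|f\|_{L^1}<\infty$. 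Granting this localization lemma, the proof closes; everything else is a routine combination of the layer-cake formula, Tonelli's theorem, and the (refined and reverse) weak-type bounds for $M$.
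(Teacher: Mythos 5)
Your proof is correct. Note that the paper does not prove this proposition at all: it is quoted from Stein's note on the class $L\log L$ (and the books of Duoandikoetxea and Stein), so there is no in-paper argument to compare against; your write-up is essentially a self-contained rendition of that classical proof. The sufficiency direction (layer-cake plus the refined weak $(1,1)$ bound applied to $f\chi_{\{|f|>\lambda/2\}}$) and the necessity direction (Calder\'on--Zygmund decomposition giving the reverse weak-type estimate $\int_{\{|f|>\lambda\}}|f|\lesssim\lambda\, m(\{Mf>c_n\lambda\})$, then layer-cake again) are exactly the standard mechanism. The one genuinely delicate point in the \emph{local} statement is that $\{Mf>c_n\lambda\}$ lives in all of $\mathbb R^n$ while the hypothesis only gives $Mf\in L^1(B)$, and you handle it properly: taking $\lambda_0\gtrsim\|f\|_{L^1}/m(B)$ forces the CZ cubes into $2B$, and your reflection argument showing $\|Mf\|_{L^1(2B)}\lesssim\|Mf\|_{L^1(B)}+\|f\|_{L^1}$ (via $Mf(x)\le 3^nMf(x')$ near $\partial B$ and the crude bound $Mf\lesssim\|f\|_{L^1}\,\mathrm{dist}(\cdot,B)^{-n}$ farther out) closes the gap; the Jacobian of the radial reflection is indeed bounded on the inner half of the annulus. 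No corrections needed.
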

This result is due to Stein \cite{stein2} (see also \cite[p.42]{duoand} and \cite[p.23]{stein}). In order to deal with the endpoint case, we need to introduce the space $L\log L(B)$, which is defined by
\begin{equation*}
L\log L(B):=\bigg\{f~\mathrm{is~supported~in}~B:\int_{B}|f(x)|\big(1+\log^+|f(x)|\big)\,dx<\infty\bigg\}.
\end{equation*}
The class $L\log L$ was originally studied by Stein \cite{stein2}. Unfortunately, the expression $\int_{B}|f(x)|\big(1+\log^+|f(x)|\big)\,dx$ does not define a norm. Given a ball $B\subset\mathbb R^n$, we define the following Luxemburg norm:
\begin{equation*}
\|f\|_{L\log L(B)}:=\inf\bigg\{\lambda>0:\int_{B}\frac{|f(x)|}{\lambda}\bigg(1+\log^{+}\Big(\frac{|f(x)|}{\lambda}\Big)\bigg)dx\leq1\bigg\}.
\end{equation*}
Then $L\log L(B)$ becomes a Banach function space (referred to as Orlicz space, which is a generalization of the $L^p$ space) with respect to the norm $\|\cdot\|_{L\log L(B)}$. We rely on the following result, as the author pointed out in \cite[p.42]{duoand}.
\begin{equation}\label{Mlog}
\|Mf\|_{L^1(B)}\leq C\|f\|_{L\log L(B)}.
\end{equation}
In general, for $1\leq p<\infty$, we now define the space $L^p\log L(B)$ as follows.
\begin{equation*}
L^p\log L(B):=\bigg\{f~\mathrm{is~supported~in}~B:\int_{B}|f(x)|^p\big(1+\log^+|f(x)|\big)\,dx<\infty\bigg\}.
\end{equation*}
Correspondingly, the Luxemburg norm of $f$ is defined by
\begin{equation*}
\|f\|_{L^p\log L(B)}:=\inf\bigg\{\lambda>0:\int_{B}\Big(\frac{|f(x)|}{\lambda}\Big)^p\bigg(1+\log^{+}\Big(\frac{|f(x)|}{\lambda}\Big)\bigg)dx\leq1\bigg\}.
\end{equation*}
By definition, it is obvious that for each given $p$, $L^p\log L(B)\subset L^p(B)$, and
\begin{equation}\label{lplogl}
\|f\|_{L^p(B)}\leq \|f\|_{L^p\log L(B)}.
\end{equation}
In addition, it is easy to verify that for any $1\leq p<\infty$,
\begin{equation}\label{pp}
\big\||f|^p\big\|_{L\log L(B)}\leq \|f\|^p_{L^p\log L(B)}.
\end{equation}
It follows immediately from \eqref{Mlog} and \eqref{pp} that
\begin{equation}\label{Mpp}
\|M(|f|^p)\|_{L^1(B)}\leq C\|f\|^p_{L^p\log L(B)}.
\end{equation}
Based on the above estimates, we are going to prove the following result for the endpoint case $p=s'$.
\begin{thm}\label{thm7}
Let $\Omega\in L^s(\mathbf{S}^{n-1})$ with $1<s\leq\infty$. Let $0<\alpha<n$, $1\leq p<q<\infty$ and $1/q=1/p-\alpha/n$. If $s'=p<n/{\alpha}$, then for any given ball $B\subset\mathbb R^n$, the inequality
\begin{equation*}
\|T_{\Omega,\alpha}(f)\|_{L^q(B)}\lesssim\|f\|_{L^p\log L(B)}
\end{equation*}
holds.
\end{thm}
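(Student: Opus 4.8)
The strategy mirrors the proof of Theorem~\ref{thm1}, but at the endpoint $p=s'$ we must replace the $L^{p/s'}$-boundedness of $M$ (which fails since $p/s'=1$) with the refined estimate \eqref{Mpp}. The pointwise bound \eqref{wanghua1} is \emph{not} available either, because its derivation used $\|f\|_{L^p}$ globally and the interpolation of the two pieces $\mathrm{I}$ and $\mathrm{II}$. Instead, I would revisit the splitting $\big|T_{\Omega,\alpha}f(x)\big|\le\mathrm{I}+\mathrm{II}$ from the proof of Theorem~\ref{thm1}, but now localize: fix the ball $B=B(x_0,r_0)$, and for $x\in B$ decompose $f=f\chi_{2B}+f\chi_{(2B)^{\complement}}=:f_1+f_2$. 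Since $\|f\|_{L^p\log L(B)}<\infty$ implies $f\chi_B\in L^p(B)$ by \eqref{lplogl}, but we want to integrate $\big|T_{\Omega,\alpha}f\big|^q$ only over $B$, this is the natural reduction.

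\textbf{Step 1 (the local part $f_1$).} For $x\in B$, I expect to prove the pointwise estimate
\begin{equation*}
\big|T_{\Omega,\alpha}f_1(x)\big|\lesssim\big[M_{s'}f_1(x)\big]^{p/q}\cdot\big[\|f_1\|_{L^p}\big]^{1-p/q}.
\end{equation*}
This is exactly \eqref{wanghua1} applied to $f_1\in L^p(\mathbb R^n)$ (note $s'=p$, so $s'\le p<n/\alpha$ holds), so nothing new is needed here. Then, using $\big\|T_{\Omega,\alpha}f_1\big\|_{L^q(B)}^q\lesssim\int_{\mathbb R^n}\big[M_{s'}f_1(x)\big]^p\,dx\cdot\big[\|f_1\|_{L^p}\big]^{q-p}$ and recognizing $\big[M_{s'}f_1\big]^p=M(|f_1|^{s'})$ with $|f_1|^{s'}$ supported in $2B$, I would invoke \eqref{Mpp}: $\big\|M(|f_1|^{s'})\big\|_{L^1}=\big\|M(|f_1|^{s'})\big\|_{L^1(\mathbb R^n)}$. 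Here one must be slightly careful, because \eqref{Mpp} is stated over a ball; since $|f_1|=|f\chi_{2B}|$ is supported in $2B$, $M(|f_1|^{s'})$ outside $2B$ decays like $|x|^{-n}$ and so the tail integral over $(2B)^{\complement}$ is comparable to $\int_{2B}|f_1|^{s'}$, which is dominated by $\|f_1\|_{L^{s'}\log L(2B)}^{s'}\le\|f\|_{L^p\log L(2B)}^p$ via \eqref{lplogl}. Combining, $\big\|T_{\Omega,\alpha}f_1\big\|_{L^q(B)}\lesssim\|f\|_{L^p\log L(2B)}$, and by the doubling/rescaling properties of the Luxemburg norm one passes from $2B$ back to a constant multiple of $\|f\|_{L^p\log L(B)}$ (this requires noting that the $L^p\log L$ norm over a larger concentric ball is controlled by that over $B$ up to the measure ratio; alternatively one states the theorem with $2B$ and absorbs the constant, as the statement only claims $\lesssim$).

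\textbf{Step 2 (the global tail $f_2$).} For $x\in B$ and $y\in(2B)^{\complement}$ we have $|x-y|\sim|x_0-y|$, so using H\"older with exponents $s$ and $s'=p$ on each annulus $2^j\cdot 2r_0\le|x_0-y|<2^{j+1}\cdot 2r_0$ together with \eqref{omega88} gives
\begin{equation*}
\big|T_{\Omega,\alpha}f_2(x)\big|\lesssim\sum_{j\ge1}\frac{(2^jr_0)^{n/s}}{(2^jr_0)^{n-\alpha}}\Big(\int_{|x_0-y|<2^{j+1}r_0}|f(y)|^p\,dy\Big)^{1/p}.
\end{equation*}
Now $\big(\int_{B(x_0,2^{j+1}r_0)}|f|^p\big)^{1/p}\le \|f\|_{L^p\log L(B(x_0,2^{j+1}r_0))}\lesssim (2^jr_0)^{?}$ — here, unlike the Morrey setting, there is no a priori control of $f$ far from $B$; so the tail $f_2$ must genuinely be assumed to vanish, i.e. in fact the theorem implicitly treats $f$ \emph{supported in $B$}, consistent with the definition of $L^p\log L(B)$ as functions supported in $B$. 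With that reading $f_2\equiv0$ and Step~2 is vacuous: only Step~1 survives, with $2B$ replaced by $B$ throughout and $f_1=f$.

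\textbf{Main obstacle.} The only real subtlety is Step~1's passage from the global bound $\big\|M(|f|^{s'})\big\|_{L^1(\mathbb R^n)}$ to the Orlicz norm \eqref{Mpp}: the cited inequality is for $M$ restricted to (or averaged over) a ball, whereas the Hardy--Littlewood--Sobolev machinery naturally produces $M(|f|^{s'})$ on all of $\mathbb R^n$. One resolves this by splitting $\int_{\mathbb R^n}M(|f|^{s'})=\int_{2B}+\int_{(2B)^{\complement}}$; the first integral is \eqref{Mpp} directly, and on the complement $M(|f|^{s'})(x)\le \frac{C}{|x-x_0|^n}\int_B|f|^{s'}$, whose integral over $(2B)^{\complement}$ is a finite constant times $\int_B|f|^{s'}\le\|f\|_{L^p\log L(B)}^p$ by \eqref{lplogl} (since $s'=p$). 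Everything else is the bookkeeping of the Hedberg argument, already carried out in the proof of Theorem~\ref{thm1}, together with the elementary inequalities \eqref{lplogl}, \eqref{pp}, \eqref{Mpp}.
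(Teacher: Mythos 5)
Your skeleton (Hedberg's pointwise bound \eqref{wanghua1} applied to $f$, which is indeed supported in $B$ by the very definition of $L^p\log L(B)$, followed by \eqref{Mpp} and \eqref{lplogl}) is the paper's argument, and your conclusion in Step 2 that $f_2\equiv 0$ is the right reading. However, Step 1 contains a genuine error at exactly the point you flag as the ``main obstacle''. After writing $\|T_{\Omega,\alpha}f\|_{L^q(B)}^q\lesssim \int_{\mathbb R^n}[M_{s'}f(x)]^{p}\,dx\cdot\|f\|_{L^p}^{q-p}$ you need $\int_{\mathbb R^n}M(|f|^{s'})(x)\,dx<\infty$, and your proposed resolution---that outside $2B$ one has $M(|f|^{s'})(x)\le C|x-x_0|^{-n}\int_B|f|^{s'}$, ``whose integral over $(2B)^{\complement}$ is a finite constant times $\int_B|f|^{s'}$''---is false: $\int_{(2B)^{\complement}}|x-x_0|^{-n}\,dx=c_n\int_{2r_0}^{\infty}\varrho^{-1}\,d\varrho=\infty$, so the decay $|x|^{-n}$ is precisely non-integrable at infinity. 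Indeed, by the proposition recalled in the paper (Stein's observation that $Mg\notin L^1(\mathbb R^n)$ whenever $g\in L^1(\mathbb R^n)$ is not identically zero, applied to $g=|f|^{s'}$), the quantity $\int_{\mathbb R^n}M(|f|^{s'})\,dx$ you propose to bound is infinite for every nonzero $f$, so this route cannot be repaired by a better tail estimate.

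The gap is self-inflicted and vanishes once you do not enlarge the domain of integration: since the target norm is $\|\cdot\|_{L^q(B)}$, raising \eqref{wanghua1} to the $q$-th power and integrating only over $B$ gives
\begin{equation*}
\|T_{\Omega,\alpha}(f)\|_{L^q(B)}\lesssim\bigg(\int_{B}\big|M(|f|^{p})(x)\big|\,dx\bigg)^{1/q}\cdot\big[\|f\|_{L^p(B)}\big]^{1-p/q}
=\big\|M(|f|^p)\big\|_{L^1(B)}^{1/q}\cdot\big[\|f\|_{L^p(B)}\big]^{1-p/q},
\end{equation*}
which is exactly the local quantity that \eqref{Mpp} controls; together with \eqref{lplogl} this yields $\lesssim\|f\|_{L^p\log L(B)}^{p/q}\cdot\|f\|_{L^p\log L(B)}^{1-p/q}=\|f\|_{L^p\log L(B)}$. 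This is the paper's (two-line) proof. Your decomposition $f=f_1+f_2$, the passage to $2B$, and any estimate of $M(|f|^{s'})$ off the ball are all unnecessary, because \eqref{Mpp} is stated for $\|M(\cdot)\|_{L^1(B)}$, which is all the localized left-hand side ever requires.
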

\begin{proof}[Proof of Theorem $\ref{thm7}$]
Taking into consideration \eqref{wanghua1}, we get
\begin{equation*}
\begin{split}
\|T_{\Omega,\alpha}(f)\|_{L^q(B)}&\lesssim\bigg(\int_{B}\big|M_{s'}f(x)\big|^{p}dx\bigg)^{1/q}\cdot\big[\|f\|_{L^p(B)}\big]^{1-p/q}\\
&=\bigg(\int_{B}\big|M(|f|^{p})(x)\big|\,dx\bigg)^{1/q}\cdot\big[\|f\|_{L^p(B)}\big]^{1-p/q}.
\end{split}
\end{equation*}
Then we can easily conclude from \eqref{Mpp} and \eqref{lplogl} that
\begin{equation*}
\|T_{\Omega,\alpha}(f)\|_{L^q(B)}\leq C\big[\|f\|_{L^p\log L(B)}\big]^{p/q}\cdot\big[\|f\|_{L^p\log L(B)}\big]^{1-p/q}=C\|f\|_{L^p\log L(B)}.
\end{equation*}
The proof is now complete.
\end{proof}
Before stating our next theorem, let us now introduce some notations.
\begin{defin}[\cite{ding2}]\label{defd}
We say that $\Omega$ satisfies the $L^s$-Dini smoothness condition $\mathfrak{D}_{s}$, if $\Omega\in L^s(\mathbf{S}^{n-1})$ is homogeneous of degree zero on $\mathbb R^n$ with $1\leq s<\infty$, and
\begin{equation*}
\int_0^1\frac{\omega_s(\delta)}{\delta}\,d\delta<\infty,
\end{equation*}
where $\omega_s(\delta)$ denotes the integral modulus of continuity of order $s$, which is defined by
\begin{equation*}
\omega_s(\delta):=\sup_{|\rho|<\delta}\bigg(\int_{\mathbf{S}^{n-1}}\big|\Omega(\rho x')-\Omega(x')\big|^sd\sigma(x')\bigg)^{1/s}
\end{equation*}
and $\rho$ is a rotation in $\mathbb R^n$ and $|\rho|:=\|\rho-I\|=\sup_{x'\in\mathbf{S}^{n-1}}\big|\rho x'-x'\big|$. We also say that $\Omega$ satisfies the $L^{\infty}$-Dini smoothness condition $\mathfrak{D}_{\infty}$, if $\Omega\in L^{\infty}(\mathbf{S}^{n-1})$ is homogeneous of degree zero on $\mathbb R^n$, and
\begin{equation*}
\int_0^1\frac{\omega_{\infty}(\delta)}{\delta}\,d\delta<\infty,
\end{equation*}
where $\omega_{\infty}(\delta)$ is defined by
\begin{equation*}
\omega_{\infty}(\delta):=\sup_{|\rho|<\delta,x'\in \mathbf{S}^{n-1}}\big|\Omega(\rho x')-\Omega(x')\big|,
\end{equation*}
where $x'=x/{|x|}$ for any $\mathbb R^n\ni x\neq0$.
\end{defin}
We need the following estimate which can be found in \cite[Lemma 1]{ding2}.
\begin{lem}[\cite{ding2}]\label{lem1}
Suppose that $0<\alpha<n$ and $\Omega$ satisfies the $L^s$-Dini smoothness condition $\mathfrak{D}_{s}$ with $1<s\le\infty$. If there exists a constant $0<\tau<1/2$ such that if $|x|<\tau R$, then we have
\begin{equation*}
\bigg(\int_{R\leq|z|<2R}\bigg|\frac{\Omega(z-x)}{|z-x|^{n-\alpha}}-\frac{\Omega(z)}{|z|^{n-\alpha}}\bigg|^sdz\bigg)^{1/s}
\leq C\cdot R^{n/s-(n-\alpha)}\bigg(\frac{|x|}{R}+\int_{|x|/{2R}}^{|x|/R}\frac{\omega_{s}(\delta)}{\delta}\,d\delta\bigg),
\end{equation*}
where the constant $C>0$ is independent of $R$ and $x$.
\end{lem}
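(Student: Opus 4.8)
\medskip
\noindent\textbf{Proof strategy.} The plan is to separate the contribution coming from the two radial factors from that coming from the two angular factors:
\begin{equation*}
\frac{\Omega(z-x)}{|z-x|^{n-\alpha}}-\frac{\Omega(z)}{|z|^{n-\alpha}}
=\Omega(z-x)\Big(\frac{1}{|z-x|^{n-\alpha}}-\frac{1}{|z|^{n-\alpha}}\Big)+\frac{\Omega(z-x)-\Omega(z)}{|z|^{n-\alpha}}=:\mathrm A(z)+\mathrm B(z),
\end{equation*}
and to estimate the $L^s(\{R\le|z|<2R\})$-norms of $\mathrm A$ and $\mathrm B$ separately, adding them at the end by Minkowski's inequality. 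Throughout, $|x|<\tau R<R/2$ together with $R\le|z|<2R$ forces $R/2<|z-x|<3R$, so all radii in sight are comparable to $R$. (The case $s=\infty$ is handled by the same argument with $L^\infty$, $\omega_\infty$, and $\mathcal R^{n/\infty}:=1$.)

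First I would dispose of $\mathrm A$, which is routine: by the mean value theorem applied to $t\mapsto t^{-(n-\alpha)}$ on $[R/2,3R]$ one has $\big|\,|z-x|^{-(n-\alpha)}-|z|^{-(n-\alpha)}\big|\le C|x|R^{-(n-\alpha)-1}$ uniformly; pulling this factor out, changing variables $z-x\mapsto w$, and invoking the basic bound \eqref{omega88} gives
\begin{equation*}
\Big(\int_{R\le|z|<2R}|\mathrm A(z)|^s\,dz\Big)^{1/s}\le C\,\frac{|x|}{R^{n-\alpha+1}}\Big(\int_{|w|<3R}|\Omega(w)|^s\,dw\Big)^{1/s}\le C\,R^{n/s-(n-\alpha)}\,\frac{|x|}{R},
\end{equation*}
which is precisely the first term on the right of the asserted inequality.

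The term $\mathrm B$ is the substance of the lemma. Since $|z|$ is comparable to $R$, it suffices to prove that $G:=\int_{R\le|z|<2R}|\Omega(z-x)-\Omega(z)|^s\,dz\le CR^{n}\big(\int_{|x|/2R}^{|x|/R}\omega_s(\delta)\delta^{-1}\,d\delta\big)^s$. Passing to polar coordinates $z=r\theta$ and using that $\Omega$ is homogeneous of degree zero, so that $\Omega(r\theta-x)=\Omega\big((\theta-x/r)/|\theta-x/r|\big)$, one obtains
\begin{equation*}
G=\int_R^{2R}r^{n-1}\,\Psi(x/r)\,dr,\qquad \Psi(a):=\int_{\mathbf{S}^{n-1}}\big|\Omega\big((\theta-a)/|\theta-a|\big)-\Omega(\theta)\big|^s\,d\sigma(\theta)\quad(|a|<\tau),
\end{equation*}
so the crux is the estimate $\Psi(a)\le C\,[\omega_s(C|a|)]^s$. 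The geometric input is elementary: for $|a|<1/2$ the point $(\theta-a)/|\theta-a|$ lies on the meridian great circle through $\theta$ and $a$, within distance $C|a|$ of $\theta$, so there is a rotation $\rho_{\theta,a}$ — acting in the plane $\mathrm{span}\{\theta,a\}$ — with $\rho_{\theta,a}\theta=(\theta-a)/|\theta-a|$ and $|\rho_{\theta,a}-I|\le C|a|$. The obstacle is that $\theta\mapsto(\theta-a)/|\theta-a|$ is genuinely nonlinear and is realized by no single rotation, so one cannot simply feed the bound on $\rho_{\theta,a}$ into the definition of $\omega_s$. This is bridged by localizing $\mathbf{S}^{n-1}$ to a fixed finite family of caps away from the poles $\pm a/|a|$ (on small polar caps the integrand is controlled directly via $\big|(\theta-a)/|\theta-a|-\theta\big|\le C|a|$ and $\Omega\in L^s$), replacing the nonlinear map on each cap by the single rotation agreeing with it at the centre — whose $L^s$-error is controlled by $\omega_s(C|a|)$ through the rotation-invariance of $d\sigma$ — and iterating to absorb the second-order remainder; this is the localization-and-iteration argument of Ding--Lu, \cite[Lemma~1]{ding2}, and it is the step I expect to be the main difficulty.

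Granting $\Psi(a)\le C[\omega_s(C|a|)]^s$, the proof concludes by inserting this into the formula for $G$, bounding $r^{n-1}\le CR^{n-1}$, and changing variables $\delta=|x|/r$, so that $r\in[R,2R]$ corresponds to $\delta\in[|x|/2R,|x|/R]$ with $|x|/\delta=r\le 2R$:
\begin{equation*}
G\le CR^{n-1}\int_R^{2R}[\omega_s(C|x|/r)]^s\,dr=CR^{n-1}\int_{|x|/2R}^{|x|/R}[\omega_s(C\delta)]^s\,\frac{|x|}{\delta^{2}}\,d\delta\le CR^{n}\int_{|x|/2R}^{|x|/R}\frac{[\omega_s(C\delta)]^{s}}{\delta}\,d\delta.
\end{equation*}
Since $\omega_s$ is non-decreasing and doubling ($\omega_s(2\delta)\le 2\omega_s(\delta)$, as seen by writing a rotation as a product of two and using the invariance of $d\sigma$), the last integral is $\le C\big(\int_{|x|/2R}^{|x|/R}\omega_s(\delta)\delta^{-1}\,d\delta\big)^s$: factor $[\omega_s(C\delta)]^s=[\omega_s(C\delta)]^{s-1}\omega_s(C\delta)$, bound $[\omega_s(C\delta)]^{s-1}\le[\omega_s(C|x|/R)]^{s-1}\le C\big(\int_{|x|/2R}^{|x|/R}\omega_s(\delta)\delta^{-1}\,d\delta\big)^{s-1}$, and absorb the remaining constant inside $\omega_s$ by doubling (after normalizing the geometric constant so the endpoints are exactly $|x|/2R$ and $|x|/R$). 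Taking $s$-th roots, dividing by $R^{n-\alpha}$, and combining with the bound for $\mathrm A$ yields the lemma, with a constant $C$ independent of $R$ and $x$.
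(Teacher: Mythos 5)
First, a point of comparison: the paper does not prove this lemma at all --- it is quoted from Ding and Lu \cite[Lemma 1]{ding2} --- so there is no internal proof to measure you against, and your proposal has to stand on its own. Its outer structure is sound: the splitting into the radial-difference term $\mathrm A$ and the angular-difference term $\mathrm B$, the mean-value-theorem bound for $\mathrm A$ combined with \eqref{omega88}, the reduction of $\mathrm B$ by homogeneity and polar coordinates to the spherical estimate $\Psi(a)\le C[\omega_s(C|a|)]^s$ with $a=x/r$, and the final change of variables $\delta=|x|/r$ together with monotonicity and quasi-doubling of $\omega_s$, which reproduces the Dini integral over $[|x|/{2R},|x|/R]$, are all correct bookkeeping (the doubling inequality needs a touch more care than ``a rotation is a product of two'' in the operator-norm metric, but that is harmless).

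The genuine gap is the crux itself: the estimate $\Psi(a)\le C[\omega_s(C|a|)]^s$, i.e.\ controlling the $L^s(\mathbf S^{n-1})$-difference of $\Omega$ along the non-rotation map $T_a:\theta\mapsto(\theta-a)/|\theta-a|$ by the rotational modulus, is exactly what the lemma amounts to, and you do not prove it --- you sketch a ``localization-and-iteration'' and then cite \cite{ding2} for it. Moreover, the mechanism you sketch would not close as described: $\omega_s$ is a global spherical integral and does not localize, so after covering $\mathbf S^{n-1}$ by $N$ caps the rotation terms alone already cost $N[\omega_s(C|a|)]^s$ (restricting the integral to a cap gains nothing), while the per-cap error is not second order --- the vector field $\theta\mapsto -a+(a\cdot\theta)\theta$ generating $T_a$ is conformal Killing but not Killing, so its deformation tensor has size $|a|$ and the best rotation approximates $T_a$ on a cap of diameter $h$ only to within $C|a|h$; shrinking $h$ forces $N\to\infty$ and the iteration compounds rather than absorbs the error. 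The claim you need is true, but the standard route is different: average over the rotation group instead of localizing. Insert a rotation $\rho$ distributed according to Haar measure on a $C|a|$-neighbourhood $\mathcal N$ of the identity, split $|\Omega(T_a\theta)-\Omega(\theta)|\le|\Omega(T_a\theta)-\Omega(\rho\theta)|+|\Omega(\rho\theta)-\Omega(\theta)|$, bound the second term by $[\omega_s(C|a|)]^s$ after integrating in $\rho$ and $\theta$, and for the first term translate $\rho$ on the group by a rotation carrying $\theta$ to $T_a\theta$ and then change variables $\eta=T_a\theta$ on the sphere, which is legitimate because $T_a$ is a diffeomorphism of $\mathbf S^{n-1}$ with Jacobian bounded above and below for $|a|<1/2$; this yields $\Psi(a)\le C[\omega_s(C|a|)]^s$ and makes your radial slicing argument complete. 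As written, however, your proposal defers the heart of the lemma to the very reference the paper already cites, so it is an outline rather than a proof.
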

For the critical case $p=n/{\alpha}$, we give the following inequality.
\begin{thm}\label{thm9}
Suppose that $\Omega$ satisfies the $L^s$-Dini smoothness condition $\mathfrak{D}_{s}$ with $s\geq n/{(n-\alpha)}$. Let $0<\alpha<n$ and $p=n/{\alpha}$. Then the inequality
\begin{equation*}
\|T_{\Omega,\alpha}(f)\|_{\mathrm{BMO}}\lesssim\|f\|_{L^{p}}
\end{equation*}
holds.
\end{thm}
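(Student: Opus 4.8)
The plan is to verify the $\mathrm{BMO}$ oscillation estimate directly: for an arbitrary ball $B=B(x_0,r)$ I will produce a constant $c_B$ with $\frac{1}{m(B)}\int_B|T_{\Omega,\alpha}f(x)-c_B|\,dx\lesssim\|f\|_{L^{p}}$ where $p=n/\alpha$; a routine density argument lets me assume $f$ is bounded with compact support, so that $T_{\Omega,\alpha}f$ is an honest locally integrable function. Fix the constant $\tau\in(0,1/2)$ furnished by Lemma \ref{lem1}, put $\rho:=2/\tau$, split $f=f_1+f_2$ with $f_1:=f\chi_{B(x_0,\rho r)}$ and $f_2:=f-f_1$, and choose $c_B:=T_{\Omega,\alpha}f_2(x_0)$. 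Then $\frac{1}{m(B)}\int_B|T_{\Omega,\alpha}f-c_B|\,dx$ is bounded by a local term $\frac{1}{m(B)}\int_B|T_{\Omega,\alpha}f_1|\,dx$ plus a far term $\frac{1}{m(B)}\int_B|T_{\Omega,\alpha}f_2(x)-T_{\Omega,\alpha}f_2(x_0)|\,dx$.

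For the local term I would reuse the estimate of the quantity $\mathrm{I}$ from the proof of Theorem \ref{thm1}. By Tonelli's theorem and the triangle inequality, $\int_B|T_{\Omega,\alpha}f_1(x)|\,dx\le\int_{B(x_0,\rho r)}|f(y)|\Big(\int_{|x-y|<(1+\rho)r}\tfrac{|\Omega(x-y)|}{|x-y|^{n-\alpha}}\,dx\Big)dy$, and a dyadic decomposition of the inner integral combined with \eqref{omega88} and H\"older's inequality gives $\int_{|z|<(1+\rho)r}\tfrac{|\Omega(z)|}{|z|^{n-\alpha}}\,dz\lesssim r^{\alpha}$, uniformly in $x_0$. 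Bounding $\int_{B(x_0,\rho r)}|f|\,dy\lesssim r^{n-\alpha}\|f\|_{L^{n/\alpha}}$ by H\"older's inequality with exponents $n/\alpha$ and $n/(n-\alpha)$ then yields $\int_B|T_{\Omega,\alpha}f_1|\,dx\lesssim r^{\alpha}\cdot r^{n-\alpha}\|f\|_{L^{n/\alpha}}=r^n\|f\|_{L^{n/\alpha}}\lesssim m(B)\,\|f\|_{L^{n/\alpha}}$, which settles this piece.

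The far term is where the Dini hypothesis $\mathfrak{D}_s$ enters and is the main obstacle. Decompose $(B(x_0,\rho r))^{\complement}=\bigcup_{k\ge0}\{y:2^k\rho r\le|y-x_0|<2^{k+1}\rho r\}$ and put $R_k:=2^k\rho r$; for $x\in B$ one has $|x-x_0|<r\le R_k/\rho<\tau R_k$, so Lemma \ref{lem1} applies on each annulus after the change of variables $z=y-x_0$ with shift $x-x_0$. Feeding Lemma \ref{lem1} into H\"older's inequality in $y$ (exponents $s$ and $s'$) and then into H\"older's inequality on $B(x_0,2R_k)$ — legitimate because $s'\le n/\alpha$, which is exactly the hypothesis $s\ge n/(n-\alpha)$ — I get, for each $k$, that $\int_{2^k\rho r\le|y-x_0|<2^{k+1}\rho r}\big|\tfrac{\Omega(x-y)}{|x-y|^{n-\alpha}}-\tfrac{\Omega(x_0-y)}{|x_0-y|^{n-\alpha}}\big|\,|f(y)|\,dy\lesssim\Big(\tfrac{|x-x_0|}{R_k}+\int_{|x-x_0|/(2R_k)}^{|x-x_0|/R_k}\tfrac{\omega_s(\delta)}{\delta}\,d\delta\Big)\|f\|_{L^{n/\alpha}}$; the delicate point is that all powers of $R_k$ cancel exactly, since $n/s+n/s'=n=(n-\alpha)+\alpha$, which is also why $c_B$ itself need not be defined by an absolutely convergent integral whereas the difference is. Summing over $k\ge0$, the geometric contributions give $\sum_k|x-x_0|/R_k\lesssim1$, and since the intervals $[|x-x_0|/(2R_k),|x-x_0|/R_k]$, $k\ge0$, tile $(0,|x-x_0|/(\rho r)]\subset(0,1)$, we obtain $\sum_k\int_{|x-x_0|/(2R_k)}^{|x-x_0|/R_k}\tfrac{\omega_s(\delta)}{\delta}\,d\delta\le\int_0^1\tfrac{\omega_s(\delta)}{\delta}\,d\delta<\infty$ by Definition \ref{defd}. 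Hence $|T_{\Omega,\alpha}f_2(x)-c_B|\lesssim\|f\|_{L^{n/\alpha}}$ for every $x\in B$.

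Combining the two contributions and taking the supremum over all balls $B$ yields $\|T_{\Omega,\alpha}f\|_{\mathrm{BMO}}\lesssim\|f\|_{L^{n/\alpha}}$; the case $s=\infty$ is handled identically using $\omega_\infty$ and the $L^\infty$ version of Lemma \ref{lem1}. I expect the only genuine work to be in the far term: invoking Lemma \ref{lem1} with the enlargement factor $\rho$ correctly tied to $\tau$, verifying the exact cancellation of the $R_k$-powers, and identifying the telescoping of the modulus-of-continuity integrals with the convergent Dini integral $\int_0^1\omega_s(\delta)/\delta\,d\delta$.
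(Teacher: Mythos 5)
Your argument is correct, and its skeleton is the same as the paper's: split $f$ into a near part supported on a dilate of $B$ and a far part, control the near part by the size estimate $\int_{|z|<Cr}|\Omega(z)|/|z|^{n-\alpha}\,dz\lesssim r^{\alpha}$ together with H\"older in $L^{n/\alpha}$, and control the oscillation of the far part over $B$ by Lemma \ref{lem1} on dyadic annuli plus the Dini condition of Definition \ref{defd}. The differences are modest but worth noting. First, you subtract the constant $c_B=T_{\Omega,\alpha}f_2(x_0)$ (after reducing to bounded, compactly supported $f$ so that this constant is finite), whereas the paper works with $(T_{\Omega,\alpha}f)_B$ and the double average $\frac{1}{m(B)^2}\int_B\int_B|T_{\Omega,\alpha}f_2(x)-T_{\Omega,\alpha}f_2(y)|\,dy\,dx$; this is cosmetic. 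Second, and more substantively, the paper proves only the endpoint $s=n/(n-\alpha)$ (where $s'=p$, so the factor $R^{n/s-(n-\alpha)}$ from Lemma \ref{lem1} is already $R^{0}$ and H\"older against $\|f\|_{L^p}$ is immediate), and it cites Ding--Lu for $s>n/(n-\alpha)$. Your extra H\"older step on $B(x_0,2R_k)$, with the exact cancellation $R_k^{n/s-(n-\alpha)}\cdot R_k^{n/s'-\alpha}=1$ (the same exponent bookkeeping the paper uses in Case 2 of Theorem \ref{thm10}), makes the proof uniform over the full range $s\geq n/(n-\alpha)$ and thus self-contained where the paper defers to a reference; your bookkeeping tying the enlargement factor $\rho=2/\tau$ to the constant $\tau$ of Lemma \ref{lem1}, and the telescoping of the modulus-of-continuity integrals into $\int_0^1\omega_s(\delta)\delta^{-1}\,d\delta$, match the paper's treatment. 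The only soft spot is the appeal to a ``routine density argument'' to make $T_{\Omega,\alpha}f$ pointwise meaningful on $L^{n/\alpha}$, but the paper's own proof is no more careful on this point, so it is not a gap relative to the intended statement.
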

\begin{proof}[Proof of Theorem $\ref{thm9}$]
Actually, the case $s>n/{(n-\alpha)}$ in this theorem was proved by Ding and Lu (see the unweighted version of Theorem 1 in \cite{ding2}). We only give the proof of the case $s=n/{(n-\alpha)}$ ($s=p'$ when $p=n/{\alpha}$). For any ball $B=B(x_0,r)$ with $(x_0,r)\in\mathbb R^n\times(0,\infty)$ and $f\in L^{p}(\mathbb R^n)$, we are going to give the estimation of the following expression
\begin{equation}\label{mainw}
\frac{1}{m(B)}\int_{B}\big|T_{\Omega,\alpha}f(x)-(T_{\Omega,\alpha}f)_{B}\big|\,dx.
\end{equation}
For this purpose, we decompose $f$ as $f=f_1+f_2$, where $f_1=f\cdot\chi_{4B}$, $f_2=f\cdot\chi_{(4B)^{\complement}}$, $4B=B(x_0,4r)$. By using the linearity of $T_{\Omega,\alpha}$, the expression \eqref{mainw} will be divided into two parts. That is,
\begin{equation*}
\begin{split}
&\frac{1}{m(B)}\int_{B}\big|T_{\Omega,\alpha}f(x)-(T_{\Omega,\alpha}f)_{B}\big|\,dx\\
&\leq \frac{1}{m(B)}\int_{B}\big|T_{\Omega,\alpha}f_1(x)-(T_{\Omega,\alpha}f_1)_{B}\big|\,dx
+\frac{1}{m(B)}\int_{B}\big|T_{\Omega,\alpha}f_2(x)-(T_{\Omega,\alpha}f_2)_{B}\big|\,dx\\
&:=\mathrm{I_1+I_2}.
\end{split}
\end{equation*}
Let us first consider the term $\mathrm{I_1}$. When $x\in B$ and $y\in 4B$, we have $|x-y|<5r$. An application of Fubini's theorem yields
\begin{equation*}
\begin{split}
\mathrm{I_1}&\leq\frac{2}{m(B)}\int_{B}|T_{\Omega,\alpha}f_1(x)|\,dx\\
&\leq\frac{2}{m(B)}\int_{B}\bigg(\int_{4B}\frac{|\Omega(x-y)|}{|x-y|^{n-\alpha}}|f(y)|\,dy\bigg)dx\\
&\leq\frac{2}{m(B)}\int_{4B}|f(y)|\bigg(\int_{|x-y|<5r}\frac{|\Omega(x-y)|}{|x-y|^{n-\alpha}}\,dx\bigg)dy.
\end{split}
\end{equation*}
Since $\Omega\in L^s(\mathbf{S}^{n-1})$, by H\"{o}lder's inequality, we know that $\Omega\in L^1(\mathbf{S}^{n-1})$, and
\begin{equation*}
\|\Omega\|_{L^1(\mathbf{S}^{n-1})}\leq C\|\Omega\|_{L^s(\mathbf{S}^{n-1})}.
\end{equation*}
Note that $\alpha>0$. Using polar coordinates, we can deduce that
\begin{equation}\label{wang1}
\begin{split}
&\int_{|x-y|<5r}\frac{|\Omega(x-y)|}{|x-y|^{n-\alpha}}\,dx\\
&=\int_{0}^{5r}\int_{\mathbf{S}^{n-1}}\frac{|\Omega(x')|}{\varrho^{n-\alpha}}\varrho^{n-1}d\sigma(x')d\varrho\\
&=C\cdot r^{\alpha}\|\Omega\|_{L^1(\mathbf{S}^{n-1})}\leq Cm(B)^{\alpha/n}\|\Omega\|_{L^s(\mathbf{S}^{n-1})}.
\end{split}
\end{equation}
Applying the H\"older inequality, we obtain
\begin{equation}\label{wang2}
\begin{split}
\int_{4B}|f(y)|\,dy
&\leq\bigg(\int_{4B}|f(y)|^p\,dy\bigg)^{1/p}m(4B)^{1/{p'}}.
\end{split}
\end{equation}
Notice that
\begin{equation}\label{cal3}
1-\frac{\alpha}{\,n\,}=1-\frac{\,1\,}{p}=\frac{1}{p'}.
\end{equation}
Hence, from \eqref{wang1}, \eqref{wang2} and \eqref{cal3}, it follows that
\begin{equation*}
\begin{split}
\mathrm{I_1}&\leq C\cdot\|\Omega\|_{L^s(\mathbf{S}^{n-1})}\frac{1}{m(B)^{1-\alpha/n}}\int_{4B}|f(y)|\,dy\\
&\leq C\cdot\|\Omega\|_{L^s(\mathbf{S}^{n-1})}\frac{1}{m(B)^{1-\alpha/n}}\cdot m(4B)^{1/{p'}}
\bigg(\int_{4B}|f(y)|^p\,dy\bigg)^{1/p}\\
&\leq C\cdot\|\Omega\|_{L^s(\mathbf{S}^{n-1})}\|f\|_{L^{p}},
\end{split}
\end{equation*}
as desired. Let us now turn to the estimate of the second term $\mathrm{I_2}$. It is easy to see that
\begin{equation*}
\begin{split}
\mathrm{I_2}&=\frac{1}{m(B)}\int_{B}\big|T_{\Omega,\alpha}f_2(x)-(T_{\Omega,\alpha}f_2)_{B}\big|\,dx\\
&\leq\frac{1}{m(B)}\int_{B}\bigg\{\frac{1}{m(B)}\int_{B}\big|T_{\Omega,\alpha}f_2(x)-T_{\Omega,\alpha}f_2(y)\big|\,dy\bigg\}dx\\
&\leq\frac{1}{m(B)}\int_{B}\bigg\{\frac{1}{m(B)}\int_{B}\bigg(\sum_{k=2}^\infty\int_{2^{k+1}B\backslash 2^kB}
\Big|\frac{\Omega(x-z)}{|x-z|^{n-\alpha}}-\frac{\Omega(y-z)}{|y-z|^{n-\alpha}}\Big||f(z)|dz\bigg)dy\bigg\}dx.
\end{split}
\end{equation*}
Since
\begin{equation*}
\begin{split}
\Big|\frac{\Omega(x-z)}{|x-z|^{n-\alpha}}-\frac{\Omega(y-z)}{|y-z|^{n-\alpha}}\Big|
&\leq\Big|\frac{\Omega(x-z)}{|x-z|^{n-\alpha}}-\frac{\Omega(x_0-z)}{|x_0-z|^{n-\alpha}}\Big|\\
&+\Big|\frac{\Omega(x_0-z)}{|x_0-z|^{n-\alpha}}-\frac{\Omega(y-z)}{|y-z|^{n-\alpha}}\Big|,
\end{split}
\end{equation*}
by using H\"{o}lder's inequality and Minkowski's inequality($s'=p$), then we have that for each integer $k\geq2$,
\begin{equation*}
\begin{split}
&\int_{2^{k+1}B\backslash 2^kB}
\Big|\frac{\Omega(x-z)}{|x-z|^{n-\alpha}}-\frac{\Omega(y-z)}{|y-z|^{n-\alpha}}\Big||f(z)|dz\\
&\leq\bigg(\int_{2^{k+1}B\backslash 2^kB}\Big|\frac{\Omega(x-z)}{|x-z|^{n-\alpha}}-\frac{\Omega(y-z)}{|y-z|^{n-\alpha}}\Big|^sdz\bigg)^{1/s}
\bigg(\int_{2^{k+1}B\backslash 2^kB}|f(z)|^{s'}dz\bigg)^{1/{s'}}\\
&\leq\bigg(\int_{2^{k+1}B\backslash 2^kB}\Big|\frac{\Omega(x-z)}{|x-z|^{n-\alpha}}-\frac{\Omega(x_0-z)}{|x_0-z|^{n-\alpha}}\Big|^sdz\bigg)^{1/s}
\|f\|_{L^{p}}\\
&+\bigg(\int_{2^{k+1}B\backslash 2^kB}\Big|\frac{\Omega(x_0-z)}{|x_0-z|^{n-\alpha}}-\frac{\Omega(y-z)}{|y-z|^{n-\alpha}}\Big|^sdz\bigg)^{1/s}
\|f\|_{L^{p}}.
\end{split}
\end{equation*}
For any given $x\in B$ and $z\in 2^{k+1}B\backslash 2^kB$ with $k\geq2$, a trivial computation shows that
\begin{equation*}
|x-x_0|<\frac{\,1\,}{4}|z-x_0|.
\end{equation*}
According to Lemma \ref{lem1}(if we take $R=2^kr$), we get
\begin{equation*}
\begin{split}
&\bigg(\int_{2^{k+1}B\backslash 2^kB}\bigg|\frac{\Omega(x-z)}{|x-z|^{n-\alpha}}-\frac{\Omega(x_0-z)}{|x_0-z|^{n-\alpha}}\bigg|^sdz\bigg)^{1/s}\\
&=\bigg(\int_{2^kr\leq|z-x_0|<2^{k+1}r}\bigg|\frac{\Omega(z-x_0-(x-x_0))}{|z-x_0-(x-x_0)|^{n-\alpha}}
-\frac{\Omega(z-x_0)}{|z-x_0|^{n-\alpha}}\bigg|^sdz\bigg)^{1/s}\\
&\leq C\big(2^kr\big)^{n/s-(n-\alpha)}
\bigg[\frac{|x-x_0|}{2^kr}+\int_{\frac{|x-x_0|}{2^{k+1}r}}^{\frac{|x-x_0|}{2^kr}}\frac{\omega_s(\delta)}{\delta}d\delta\bigg]\\
&\leq C\bigg[\frac{1}{2^k}+\int_{\frac{|x-x_0|}{2^{k+1}r}}^{\frac{|x-x_0|}{2^kr}}\frac{\omega_s(\delta)}{\delta}d\delta\bigg],
\end{split}
\end{equation*}
where in the last step we have used the fact that $\alpha=n/p$. Similarly, for any given $y\in B$, we can also obtain
\begin{equation*}
\begin{split}
&\bigg(\int_{2^{k+1}B\backslash 2^kB}\Big|\frac{\Omega(x_0-z)}{|x_0-z|^{n-\alpha}}-\frac{\Omega(y-z)}{|y-z|^{n-\alpha}}\Big|^sdz\bigg)^{1/s}\\
&\leq C\big(2^kr\big)^{n/s-(n-\alpha)}
\bigg[\frac{|y-x_0|}{2^kr}+\int_{\frac{|y-x_0|}{2^{k+1}r}}^{\frac{|y-x_0|}{2^kr}}\frac{\omega_s(\delta)}{\delta}d\delta\bigg]\\
&\leq C\bigg[\frac{1}{2^k}+\int_{\frac{|y-x_0|}{2^{k+1}r}}^{\frac{|y-x_0|}{2^kr}}\frac{\omega_s(\delta)}{\delta}d\delta\bigg].
\end{split}
\end{equation*}
Consequently, for any $x\in B$ and $y\in B$, we deduce that
\begin{equation*}
\begin{split}
&\sum_{k=2}^\infty\int_{2^{k+1}B\backslash 2^kB}
\Big|\frac{\Omega(x-z)}{|x-z|^{n-\alpha}}-\frac{\Omega(y-z)}{|y-z|^{n-\alpha}}\Big||f(z)|\,dz\\
&\leq C\|f\|_{L^{p}}\sum_{k=2}^\infty
\bigg[\frac{1}{2^{k-1}}+\int_{\frac{|x-x_0|}{2^{k+1}r}}^{\frac{|x-x_0|}{2^kr}}\frac{\omega_s(\delta)}{\delta}d\delta
+\int_{\frac{|y-x_0|}{2^{k+1}r}}^{\frac{|y-x_0|}{2^kr}}\frac{\omega_s(\delta)}{\delta}d\delta\bigg]\\
&\leq C\|f\|_{L^{p}}\bigg[1+2\int_0^1\frac{\omega_s(\delta)}{\delta}d\delta\bigg].
\end{split}
\end{equation*}
Therefore, we obtain
\begin{equation*}
\begin{split}
\mathrm{I_2}&\leq\frac{1}{m(B)}\int_{B}\bigg\{\frac{1}{m(B)}\int_{B}\bigg(\sum_{k=2}^\infty\int_{2^{k+1}B\backslash 2^kB}
\Big|\frac{\Omega(x-z)}{|x-z|^{n-\alpha}}-\frac{\Omega(y-z)}{|y-z|^{n-\alpha}}\Big||f(z)|\,dz\bigg)dy\bigg\}dx\\
&\leq C\bigg[1+2\int_0^1\frac{\omega_s(\delta)}{\delta}d\delta\bigg]\|f\|_{L^{p}}.
\end{split}
\end{equation*}
Combining the above estimates for $\mathrm{I_1}$ and $\mathrm{I_2}$ and taking the supremum over all balls $B\subset\mathbb R^n$, we conclude the proof of Theorem \ref{thm9}.
\end{proof}
For the critical case $p=n/{\alpha}$, we also show that the fractional maximal operator $M_{\Omega,\alpha}$ is bounded from $L^p(\mathbb R^n)$ to $L^{\infty}(\mathbb R^n)$.
\begin{thm}\label{thm10}
Suppose that $\Omega\in L^s(\mathbf{S}^{n-1})$ with $s\geq n/{(n-\alpha)}$. Let $0<\alpha<n$ and $p=n/{\alpha}$. Then the inequality
\begin{equation*}
\|M_{\Omega,\alpha}(f)\|_{L^{\infty}}\lesssim\|f\|_{L^{p}}
\end{equation*}
holds.
\end{thm}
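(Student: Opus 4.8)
The plan is to estimate the quantity inside the supremum defining $M_{\Omega,\alpha}f(x)$ uniformly in $r$ by a constant multiple of $\|f\|_{L^p}$, exploiting the fact that $p=n/\alpha$ is precisely the exponent for which the scaling in $r$ cancels. First I would fix $x\in\mathbb R^n$ and $r>0$ and apply H\"older's inequality with exponents $p$ and $p'$ to write
\begin{equation*}
\int_{|x-y|<r}|\Omega(x-y)f(y)|\,dy\leq\bigg(\int_{|x-y|<r}|\Omega(x-y)|^{p'}\,dy\bigg)^{1/{p'}}\bigg(\int_{|x-y|<r}|f(y)|^p\,dy\bigg)^{1/p}.
\end{equation*}
The second factor is bounded by $\|f\|_{L^p}$.

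Next I would control the first factor. Since $s\geq n/{(n-\alpha)}=p'$, H\"older's inequality on the sphere gives $\Omega\in L^{p'}(\mathbf S^{n-1})$ with $\|\Omega\|_{L^{p'}(\mathbf S^{n-1})}\leq C\|\Omega\|_{L^s(\mathbf S^{n-1})}$. Applying estimate \eqref{omega88} with $s$ replaced by $p'$ and $\mathcal R=r$ then yields
\begin{equation*}
\bigg(\int_{|x-y|<r}|\Omega(x-y)|^{p'}\,dy\bigg)^{1/{p'}}\leq C\|\Omega\|_{L^s(\mathbf S^{n-1})}\,r^{n/{p'}}.
\end{equation*}
Combining this with the previous display, the average becomes
\begin{equation*}
\frac{1}{m(B(x,r))^{1-\alpha/n}}\int_{|x-y|<r}|\Omega(x-y)f(y)|\,dy\leq C\|\Omega\|_{L^s(\mathbf S^{n-1})}\,\frac{r^{n/{p'}}}{(v_nr^n)^{1-\alpha/n}}\|f\|_{L^p}.
\end{equation*}

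Finally I would observe that $1/{p'}=1-1/p=1-\alpha/n$, so $n/{p'}=n-\alpha=n(1-\alpha/n)$; hence $r^{n/{p'}}$ cancels exactly against $(v_nr^n)^{1-\alpha/n}=v_n^{1-\alpha/n}r^{n-\alpha}$, leaving the bound $C\,v_n^{\alpha/n-1}\|\Omega\|_{L^s(\mathbf S^{n-1})}\|f\|_{L^p}$, which is independent of both $r$ and $x$. Taking the supremum over $r>0$ and then the essential supremum over $x\in\mathbb R^n$ gives $\|M_{\Omega,\alpha}(f)\|_{L^\infty}\lesssim\|f\|_{L^p}$. There is no serious obstacle here: the only point requiring care is tracking that the critical relation $p=n/\alpha$ is exactly what forces the $r$-dependence to disappear, which is the reason the $L^\infty$ endpoint holds in this case and fails for $p<n/\alpha$.
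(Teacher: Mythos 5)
Your proof is correct and follows essentially the same route as the paper: H\"older's inequality on the ball combined with the estimate \eqref{omega88} and the critical exponent arithmetic $n/p'=n-\alpha$, which makes the $r$-dependence cancel. The only (harmless) difference is that you first reduce $\Omega\in L^s(\mathbf S^{n-1})$ to $\Omega\in L^{p'}(\mathbf S^{n-1})$ and apply H\"older with exponents $(p',p)$ directly, which lets you avoid the paper's case split $s=n/(n-\alpha)$ versus $s>n/(n-\alpha)$, where the paper instead uses exponents $(s,s')$ and a second application of H\"older in the latter case.
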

\begin{proof}[Proof of Theorem $\ref{thm10}$]
Given a ball $B(x,r)$ with center $x\in\mathbb R^n$ and radius $r\in(0,\infty)$, by using H\"{o}lder's inequality and \eqref{omega88}, we get
\begin{equation*}
\begin{split}
&\frac{1}{m(B(x,r))^{1-\alpha/n}}\int_{|x-y|<r}|\Omega(x-y)|\cdot|f(y)|\,dy\\
&\leq\frac{1}{m(B(x,r))^{1/{p'}}}\bigg(\int_{|x-y|<r}|\Omega(x-y)|^sdy\bigg)^{1/s}\bigg(\int_{|x-y|<r}|f(y)|^{s'}dy\bigg)^{1/{s'}}\\
&\leq C\|\Omega\|_{L^s(\mathbf{S}^{n-1})}\cdot\frac{m(B(x,r))^{1/s}}{m(B(x,r))^{1/{p'}}}\bigg(\int_{|x-y|<r}|f(y)|^{s'}dy\bigg)^{1/{s'}}.
\end{split}
\end{equation*}
Two cases are considered below.

\textbf{Case 1:} $s=n/{(n-\alpha)}$. In this case, one has $p'=s$ when $p=n/{\alpha}$, then the result holds trivially.

\textbf{Case 2:} $s>n/{(n-\alpha)}$. A direct computation shows that $p>s'$ and
\begin{equation*}
\begin{split}
\frac{1}{s'(p/{s'})'}=\frac{1}{s'}\cdot\frac{p-s'}{p}=\frac{1}{s'}-\frac{\,1\,}{p}=\frac{1}{p'}-\frac{\,1\,}{s}.
\end{split}
\end{equation*}
This, combined with H\"{o}lder's inequality, yields
\begin{equation*}
\begin{split}
\bigg(\int_{|x-y|<r}|f(y)|^{s'}dy\bigg)^{1/{s'}}
&\leq\bigg(\int_{B(x,r)}|f(y)|^{p}\,dy\bigg)^{1/p}\big(m(B(x,r))\big)^{\frac{1}{s'(p/{s'})'}}\\
&=\bigg(\int_{B(x,r)}|f(y)|^{p}\,dy\bigg)^{1/p}\big(m(B(x,r))\big)^{1/{p'}-1/s}.
\end{split}
\end{equation*}
Hence
\begin{equation*}
\begin{split}
&\frac{1}{m(B(x,r))^{1-\alpha/n}}\int_{|x-y|<r}|\Omega(x-y)|\cdot|f(y)|\,dy\\
&\leq C\|\Omega\|_{L^s(\mathbf{S}^{n-1})}\bigg(\int_{B(x,r)}|f(y)|^p\,dy\bigg)^{1/p}
\leq C\|\Omega\|_{L^s(\mathbf{S}^{n-1})}\|f\|_{L^p}.
\end{split}
\end{equation*}
We are done.
\end{proof}
\begin{rem}
For $0<\alpha<n$, we can show that there exist positive constants $C_1,C_2>0$ independent of $f$ such that
\begin{equation*}
C_1M_{\alpha}f(x)\leq M^{\#}(I_{\alpha}f)(x)\leq C_2M_{\alpha}f(x).
\end{equation*}
Here $M^{\#}$ is the sharp maximal function of Fefferman and Stein given by
\begin{equation*}
M^{\#}f(x):=\sup_{B\ni x}\frac{1}{m(B)}\int_B|f(y)-f_B|\,dy,
\end{equation*}
where the supremum is taken over all balls $B$ containing $x$. These results were proved by Adams \cite{adams} (see also \cite{adams1}). Thus, for any locally integrable function $f$, we have
\begin{equation*}
M_{\alpha}(f)\in L^{\infty}(\mathbb R^n)\Longleftrightarrow I_{\alpha}(f)\in \mathrm{BMO}(\mathbb R^n).
\end{equation*}
We claim that if $\Omega$ satisfies the assumptions of Theorem \ref{thm9}, then there exist positive constants $C_1$ and $C_2$ independent of $f$ such that
\begin{equation*}
C_1M_{\Omega,\alpha}f(x)\leq M^{\#}(T_{\Omega,\alpha}f)(x)\leq C_2M_{\Omega,\alpha}f(x).\quad (\dag)
\end{equation*}
By using similar arguments as in the proof of Theorem \ref{thm9}, we can prove $(\dag)$ as well. The details are omitted here. Consequently, when $\Omega\in\mathfrak{D}_{s}$ with $s\geq n/{(n-\alpha)}$, we have
\begin{equation*}
M_{\Omega,\alpha}(f)\in L^{\infty}(\mathbb R^n)\Longleftrightarrow T_{\Omega,\alpha}(f)\in \mathrm{BMO}(\mathbb R^n).\quad (\ddag)
\end{equation*}
Hence, if one has a stronger assumption that $\Omega\in\mathfrak{D}_{s}$, then the proof of Theorem \ref{thm10} follows immediately from $(\ddag)$.
\end{rem}

\subsection{Boundedness of $T_{\Omega,\alpha}$ and $M_{\Omega,\alpha}$ on Morrey spaces}
\begin{thm}\label{thm2}
Let $\Omega\in L^s(\mathbf{S}^{n-1})$ with $1<s\leq\infty$. Let $0<\alpha<n$, $1\leq p<q<\infty$, $0<\kappa<1-{(\alpha p)}/n$ and $1/q=1/p-\alpha/{n(1-\kappa)}$.
$(i)$ If $s'=p$, then for every $p<n/{\alpha}$, the inequality
\begin{equation*}
\|T_{\Omega,\alpha}(f)\|_{WL^{q,\kappa}}\lesssim\|f\|_{L^{p,\kappa}}
\end{equation*}
holds.

$(ii)$ If $s'<p$, then for every $p\in(s',n/{\alpha})$, the inequality
\begin{equation*}
\|T_{\Omega,\alpha}(f)\|_{L^{q,\kappa}}\lesssim\|f\|_{L^{p,\kappa}}
\end{equation*}
holds.
\end{thm}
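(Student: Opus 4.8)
The plan is to follow Adams' method, just as in the Lebesgue-space case (Theorem \ref{thm1}), but now splitting the integral defining $T_{\Omega,\alpha}$ against the Morrey norm rather than the $L^p$ norm. Concretely, for $f\in L^{p,\kappa}(\mathbb R^n)$ with $s'\leq p<n/\alpha$ and $1/q=1/p-\alpha/[n(1-\kappa)]$, I would first establish the pointwise estimate
\begin{equation}\label{adamspoint}
\big|T_{\Omega,\alpha}f(x)\big|\lesssim\big[M_{s'}f(x)\big]^{p/q}\cdot\big[\|f\|_{L^{p,\kappa}}\big]^{1-p/q},\qquad x\in\mathbb R^n.
\end{equation}
To prove \eqref{adamspoint} I would write $|T_{\Omega,\alpha}f(x)|\leq \mathrm{I}+\mathrm{II}$ exactly as in the proof of Theorem \ref{thm1}, with cut-off radius $\sigma$. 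The term $\mathrm{I}$ over $\{|x-y|<\sigma\}$ is handled identically to before: dyadic decomposition, H\"older with exponent $s$, and the kernel bound \eqref{omega88} give $\mathrm{I}\lesssim \sigma^{\alpha}M_{s'}f(x)$. The difference is in $\mathrm{II}$: on the annulus $2^{j-1}\sigma\leq|x-y|<2^{j}\sigma$ I apply H\"older with exponents $s$ and $s'$, use \eqref{omega88} for the $\Omega$-factor, and bound the $L^{s'}$-average of $f$ over $B(x,2^{j}\sigma)$ by $M_{s'}f(x)$ again — but crucially I estimate $\big(\int_{B(x,2^j\sigma)}|f|^{s'}\big)^{1/s'}$ using the Morrey norm: $\big(\int_{B(x,2^j\sigma)}|f|^{p}\big)^{1/p}\leq \|f\|_{L^{p,\kappa}}\,(2^j\sigma)^{n\kappa/p}$, and then interpolate this with the pointwise maximal bound, or more simply bound directly $\int_{2^{j-1}\sigma\le|x-y|<2^j\sigma}|\Omega(x-y)||f(y)|\,dy\lesssim (2^j\sigma)^{n/s}\big((2^j\sigma)^{n\kappa/p}\|f\|_{L^{p,\kappa}}\big)^{?}\cdots$. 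The cleanest route is to not use $M_{s'}$ in $\mathrm{II}$ at all: bound $\big(\int_{B(x,2^j\sigma)}|f|^{s'}\big)^{1/s'}\leq \big(\int_{B(x,2^j\sigma)}|f|^{p}\big)^{1/p}\,(2^j\sigma)^{n(1/s'-1/p)}\leq \|f\|_{L^{p,\kappa}}(2^j\sigma)^{n(1/s'-1/p)+n\kappa/p}$, giving (after summing the geometric series, which converges because $n/p-n\kappa/p-\alpha=n(1-\kappa)/p-\alpha>0$ by the hypothesis $\kappa<1-\alpha p/n$) the bound $\mathrm{II}\lesssim \sigma^{\alpha-n(1-\kappa)/p}\|f\|_{L^{p,\kappa}}$.

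Putting the two pieces together yields
\begin{equation*}
\big|T_{\Omega,\alpha}f(x)\big|\lesssim \sigma^{\alpha}M_{s'}f(x)+\sigma^{\alpha-n(1-\kappa)/p}\|f\|_{L^{p,\kappa}},
\end{equation*}
and optimizing by choosing $\sigma$ so that the two terms balance, i.e. $\sigma^{n(1-\kappa)/p}=\|f\|_{L^{p,\kappa}}/M_{s'}f(x)$, produces \eqref{adamspoint} after using the arithmetic identity $1-\alpha p/[n(1-\kappa)]=p/q$. This is the analogue of the choice of $\sigma$ in Theorem \ref{thm1}, and the exponent bookkeeping is the only place one must be careful.

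From \eqref{adamspoint} the two boundedness assertions follow by raising to the $q$-th power and integrating over an arbitrary ball $B$. For part $(ii)$, with $s'<p$, I compute
\begin{equation*}
\frac{1}{m(B)^{\kappa/q}}\Big(\int_B\big|T_{\Omega,\alpha}f(x)\big|^q\,dx\Big)^{1/q}\lesssim \|f\|_{L^{p,\kappa}}^{1-p/q}\cdot\frac{1}{m(B)^{\kappa/q}}\Big(\int_B\big[M_{s'}f(x)\big]^{p}\,dx\Big)^{1/q},
\end{equation*}
and since $M_{s'}f=[M(|f|^{s'})]^{1/s'}$ with $1<p/s'<\infty$, the strong $(p/s',p/s')$ boundedness of $M$ gives $\int_B [M_{s'}f]^{p}\leq \int_{\mathbb R^n}[M(|f|^{s'})]^{p/s'}\lesssim \|f\|_{L^p}^{p}$ — but to stay inside Morrey spaces I must instead invoke the known boundedness of $M$ (equivalently $M_{s'}$) on $L^{p,\kappa}$, namely that $M_{s'}$ maps $L^{p,\kappa}$ to $L^{p,\kappa}$ for $p>s'$, which reduces to the classical fact that $M$ is bounded on $L^{p/s',\kappa}$; this yields $\frac{1}{m(B)^{\kappa/p}}\big(\int_B[M_{s'}f]^p\big)^{1/p}\lesssim\|f\|_{L^{p,\kappa}}$, and since $\kappa/q=(\kappa/p)(p/q)$ the powers of $m(B)$ match and we get $\|T_{\Omega,\alpha}f\|_{L^{q,\kappa}}\lesssim\|f\|_{L^{p,\kappa}}$. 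For part $(i)$, with $s'=p$, the same manipulation reduces the weak-$(q,\kappa)$ estimate to the weak-$(1,1)$ boundedness of $M$ applied inside each ball, exactly mirroring the endpoint argument at the end of the proof of Theorem \ref{thm1}: for $\lambda>0$,
\begin{equation*}
\lambda\, m\big(\{x\in B:|T_{\Omega,\alpha}f(x)|>\lambda\}\big)^{1/q}\lesssim \|f\|_{L^{p,\kappa}}^{1-p/q}\,\lambda\, m\Big(\Big\{x\in B: M(|f|^p)(x)>\big(\lambda/(C\|f\|_{L^{p,\kappa}}^{1-p/q})\big)^q\Big\}\Big)^{1/q},
\end{equation*}
and the weak $(1,1)$ bound for $M$ turns the right side into $C\|f\|_{L^{p,\kappa}}^{1-p/q}\big(\int_B|f|^p\big)^{1/q}\leq C\|f\|_{L^{p,\kappa}}^{1-p/q}\big(m(B)^{\kappa/q\cdot q/p\cdot\, p}\big)^{\cdots}$, which after dividing by $m(B)^{\kappa/q}$ and using $\kappa/q=(\kappa/p)(p/q)$ is $\lesssim\|f\|_{L^{p,\kappa}}$; taking suprema over $\lambda$ and over $B$ finishes the proof.

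I expect the main obstacle to be purely the exponent arithmetic: one must verify that the relation $1/q=1/p-\alpha/[n(1-\kappa)]$ together with $0<\kappa<1-\alpha p/n$ guarantees both the convergence of the geometric series in the estimate of $\mathrm{II}$ (i.e. $n(1-\kappa)/p-\alpha>0$) and the identity $1-\alpha p/[n(1-\kappa)]=p/q$ that is needed when substituting the optimal $\sigma$, and finally that $\kappa/q=(\kappa/p)(p/q)$ so the Morrey normalizations are consistent under the action of $M$ on $L^{p/s',\kappa}$. None of these steps is deep, but they must all be checked; the functional-analytic content is entirely borrowed from Theorem \ref{thm1}, the kernel estimate \eqref{omega88}, and the boundedness of the Hardy–Littlewood maximal operator on Morrey spaces.
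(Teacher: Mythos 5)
Your overall architecture is exactly the paper's: the pointwise Adams-type estimate $|T_{\Omega,\alpha}f(x)|\lesssim [M_{s'}f(x)]^{p/q}\,[\|f\|_{L^{p,\kappa}}]^{1-p/q}$ obtained by splitting at a radius $\sigma$, estimating the near part by $\sigma^{\alpha}M_{s'}f(x)$, the far part by $\sigma^{\alpha-n(1-\kappa)/p}\|f\|_{L^{p,\kappa}}$ (your convergence condition $n(1-\kappa)/p-\alpha>0$ and the identity $1-\alpha p/[n(1-\kappa)]=p/q$ are the right checks), optimizing $\sigma$, and then passing to norms via the boundedness of $M$ on Morrey spaces. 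For the far part the paper applies a single H\"older inequality with exponents $(p',p)$ (using $\Omega\in L^{p'}(\mathbf{S}^{n-1})$ since $s\ge p'$), whereas you use H\"older with $(s,s')$ followed by $(p/s',(p/s')')$; the exponents agree, so this is only a cosmetic difference. Part $(ii)$, via $M_{s'}f=[M(|f|^{s'})]^{1/s'}$, the identity $\||f|^{s'}\|_{L^{p/s',\kappa}}=\|f\|_{L^{p,\kappa}}^{s'}$ and the boundedness of $M$ on $L^{p/s',\kappa}$, is exactly the paper's argument and is correct.

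There is, however, a genuine gap in your endpoint case $(i)$: you claim that "the weak $(1,1)$ bound for $M$ turns the right side into $C\|f\|_{L^{p,\kappa}}^{1-p/q}(\int_B|f|^p)^{1/q}$". The plain weak $(1,1)$ inequality bounds $m(\{x:M(|f|^p)(x)>t\})$ by $Ct^{-1}\int_{\mathbb R^n}|f(y)|^p\,dy$, i.e.\ by the \emph{global} $L^1$ norm of $|f|^p$, not by $\int_B|f|^p$; restricting the level set to $B$ does not localize the right-hand side, and for $f\in L^{p,\kappa}$ with $\kappa>0$ the global integral is typically infinite (e.g.\ $f(x)=(1+|x|)^{-n(1-\kappa)/p}$). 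This is precisely where the Morrey case differs from the Lebesgue endpoint in Theorem \ref{thm1}. The correct ingredient is the weak-type bound of $M$ from $L^{1,\kappa}(\mathbb R^n)$ to $WL^{1,\kappa}(\mathbb R^n)$ (the paper's Lemma \ref{MM}, proved by the usual near/far splitting of $f$ relative to the ball): applying it to $g=|f|^p$ together with $\||f|^p\|_{L^{1,\kappa}}=\|f\|_{L^{p,\kappa}}^p$ gives $t\,m(\{x\in B:M(|f|^p)(x)>t\})\le C\,m(B)^{\kappa}\|f\|_{L^{p,\kappa}}^p$, and with this substitution your computation closes exactly as in the paper, yielding $\|T_{\Omega,\alpha}f\|_{WL^{q,\kappa}}\lesssim\|f\|_{L^{p,\kappa}}$.
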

\begin{proof}[Proof of Theorem \ref{thm2}]
The idea of the proof is due to Adams \cite{adams,adams1}. We will prove the following result.
\begin{equation}\label{wanghua2}
\begin{split}
\big|T_{\Omega,\alpha}f(x)\big|\lesssim\big[\|f\|_{L^{p,\kappa}}\big]^{1-p/q}\cdot\big[M_{s'}f(x)\big]^{p/q},\quad x\in\mathbb R^n.
\end{split}
\end{equation}
Let $f\in L^{p,\kappa}(\mathbb R^n)$ with $s'\leq p<n/{\alpha}$ and $0<\kappa<1-{(\alpha p)}/n$. In order to prove \eqref{wanghua2}, for any $x\in\mathbb R^n$, we write
\begin{equation*}
\begin{split}
\big|T_{\Omega,\alpha}f(x)\big|&\leq \int_{\mathbb R^n}\frac{|\Omega(x-y)|}{|x-y|^{n-\alpha}}|f(y)|\,dy\\
&=\int_{|x-y|<\sigma}\frac{|\Omega(x-y)|}{|x-y|^{n-\alpha}}|f(y)|\,dy
+\int_{|x-y|\geq\sigma}\frac{|\Omega(x-y)|}{|x-y|^{n-\alpha}}|f(y)|\,dy\\
&:=\mathrm{III+IV},
\end{split}
\end{equation*}
where $\sigma$ is a positive number to be chosen later. Invoking the same argument involving $\mathrm{I}$ as in Theorem \ref{thm1}, we then have
\begin{equation*}
\begin{split}
\mathrm{III}&=\sum_{j=1}^\infty\int_{2^{-j}\sigma\leq|x-y|<2^{-j+1}\sigma}\frac{|\Omega(x-y)|}{|x-y|^{n-\alpha}}|f(y)|\,dy\\
&\lesssim\sum_{j=1}^\infty\frac{1}{(2^{-j}\sigma)^{n-\alpha}}\cdot\big(2^{-j}\sigma\big)^{n}M_{s'}f(x)\lesssim\sigma^{\alpha}M_{s'}f(x).
\end{split}
\end{equation*}
We now give the estimate for the other term $\mathrm{IV}$. It is easy to see that
\begin{equation*}
\begin{split}
\mathrm{IV}&=\sum_{j=1}^\infty\int_{2^{j-1}\sigma\leq|x-y|<2^j\sigma}\frac{|\Omega(x-y)|}{|x-y|^{n-\alpha}}|f(y)|\,dy\\
&\leq\sum_{j=1}^\infty\frac{1}{(2^{j-1}\sigma)^{n-\alpha}}\int_{2^{j-1}\sigma\leq|x-y|<2^j\sigma}|\Omega(x-y)|\cdot|f(y)|\,dy.
\end{split}
\end{equation*}
Since $\Omega\in L^s(\mathbf{S}^{n-1})$ with $s\geq p'$, then $\Omega\in L^{p'}(\mathbf{S}^{n-1})$. Using H\"{o}lder's inequality and the estimate \eqref{omega88}, we get
\begin{equation*}
\begin{split}
&\int_{2^{j-1}\sigma\leq|x-y|<2^j\sigma}|\Omega(x-y)|\cdot|f(y)|\,dy\\
&\leq\bigg(\int_{|x-y|<2^j\sigma}|\Omega(x-y)|^{p'}\,dy\bigg)^{1/{p'}}\bigg(\int_{|x-y|<2^j\sigma}|f(y)|^p\,dy\bigg)^{1/p}\\
&\leq C\big(2^j\sigma\big)^{n/{p'}}\cdot\big(2^j\sigma\big)^{{\kappa n}/p}\|f\|_{L^{p,\kappa}},
\end{split}
\end{equation*}
for each integer $j\geq1$. Therefore,
\begin{equation*}
\begin{split}
\mathrm{IV}&\lesssim\sum_{j=1}^\infty\frac{1}{(2^{j}\sigma)^{n-\alpha}}\cdot\big(2^j\sigma\big)^{n/{p'}}
\cdot\big(2^j\sigma\big)^{{\kappa n}/p}\|f\|_{L^{p,\kappa}}\\
&=\sum_{j=1}^\infty\frac{1}{(2^{j}\sigma)^{n/p-{\kappa n}/p-\alpha}}\|f\|_{L^{p,\kappa}}
\lesssim\sigma^{\alpha-{(1-\kappa)n}/p}\|f\|_{L^{p,\kappa}},
\end{split}
\end{equation*}
where the last series is convergent since ${(1-\kappa)}/p-\alpha/n>0$. Collecting all these estimates for $\mathrm{III}$ and $\mathrm{IV}$, for any $x\in\mathbb R^n$, we conclude that
\begin{equation}\label{main2}
\big|T_{\Omega,\alpha}f(x)\big|\lesssim\Big[\sigma^{\alpha}M_{s'}f(x)+\sigma^{\alpha-{(1-\kappa)n}/p}\|f\|_{L^{p,\kappa}}\Big].
\end{equation}
We now choose $\sigma>0$ such that
\begin{equation*}
\sigma^{\alpha}M_{s'}f(x)=\sigma^{\alpha-{(1-\kappa)n}/p}\|f\|_{L^{p,\kappa}}.
\end{equation*}
That is,
\begin{equation*}
\sigma^{{(1-\kappa)n}/p}=\frac{\|f\|_{L^{p,\kappa}}}{M_{s'}f(x)}.
\end{equation*}
Putting this value of $\sigma$ back into \eqref{main2} and noting that
\begin{equation*}
1-{\alpha p}/{(1-\kappa)n}=p\cdot\big[1/p-\alpha/{(1-\kappa)n}\big]=p/q,
\end{equation*}
we obtain
\begin{equation*}
\begin{split}
\big|T_{\Omega,\alpha}f(x)\big|\lesssim\sigma^{\alpha}M_{s'}f(x)&=\Big[\frac{\|f\|_{L^{p,\kappa}}}{M_{s'}f(x)}\Big]^{{\alpha p}/{(1-\kappa)n}}\cdot M_{s'}f(x)\\
&=\big[M_{s'}f(x)\big]^{p/q}\cdot\big[\|f\|_{L^{p,\kappa}}\big]^{1-p/q}.
\end{split}
\end{equation*}
Thus, \eqref{wanghua2} holds. We also need the following lemma in \cite{adams1} (see also \cite{komori} for the weighted case).
\begin{lem}\label{MM}
Let $1<p<\infty$ and $0<\kappa<1$. Then the Hardy--Littlewood maximal operator $M$ is bounded on $L^{p,\kappa}(\mathbb R^n)$, and bounded from $L^{1,\kappa}(\mathbb R^n)$ to $WL^{1,\kappa}(\mathbb R^n)$.
\end{lem}
The conclusion of Theorem \ref{thm2} then follows from \eqref{wanghua2} and the boundedness of $M$ on Morrey spaces.
\begin{itemize}
  \item If $s'<p<n/{\alpha}$, we first observe that
\begin{equation}\label{fcase}
\begin{split}
\big\||f|^{s'}\big\|_{L^{p/{s'},\kappa}}
=&\sup_{B\subseteq\mathbb R^n}\bigg(\frac{1}{m(B)^{\kappa}}\int_B|f(x)|^{s'\cdot(p/{s'})}\,dx\bigg)^{{s'}/p}\\
=&\bigg[\sup_{B\subseteq\mathbb R^n}\bigg(\frac{1}{m(B)^{\kappa}}\int_B|f(x)|^p\,dx\bigg)^{1/p}\bigg]^{s'}\\
=&\big[\|f\|_{L^{p,\kappa}}\big]^{s'}.
\end{split}
\end{equation}
An application of \eqref{wanghua2} yields
\begin{equation*}
\begin{split}
\|T_{\Omega,\alpha}(f)\|_{L^{q,\kappa}}
&\lesssim\sup_{B\subseteq\mathbb R^n}\bigg(\frac{1}{m(B)^{\kappa}}\int_{B}\big|M_{s'}f(x)\big|^{p}dx\bigg)^{1/q} \cdot\big[\|f\|_{L^{p,\kappa}}\big]^{1-p/q}\\
&=\sup_{B\subseteq\mathbb R^n}\bigg(\frac{1}{m(B)^{\kappa}}\int_{B}\big|M(|f|^{s'})(x)\big|^{p/{s'}}dx\bigg)^{1/{q}}
\cdot\big[\|f\|_{L^{p,\kappa}}\big]^{1-p/q}.
\end{split}
\end{equation*}
Moreover, in view of \eqref{fcase} and Lemma \ref{MM}, we have
\begin{equation*}
\begin{split}
\|T_{\Omega,\alpha}(f)\|_{L^{q,\kappa}}&\lesssim
\big[\big\||f|^{s'}\big\|_{L^{p/{s'},\kappa}}\big]^{p/{(s'q)}}
\cdot\big[\|f\|_{L^{p,\kappa}}\big]^{1-p/q}\\
&=\big[\|f\|_{L^{p,\kappa}}\big]^{p/q}\cdot\big[\|f\|_{L^{p,\kappa}}\big]^{1-p/q}=\|f\|_{L^{p,\kappa}}.
\end{split}
\end{equation*}
This shows $(ii)$.
  \item If $s'=p<n/{\alpha}$, we observe that
\begin{equation}\label{scase}
\begin{split}
\big\||f|^{p}\big\|_{L^{1,\kappa}}
=&\sup_{B'\subseteq\mathbb R^n}\bigg(\frac{1}{m(B')^{\kappa}}\int_{B'}|f(x)|^{p}\,dx\bigg)\\
=&\bigg[\sup_{B'\subseteq\mathbb R^n}\bigg(\frac{1}{m(B')^{\kappa}}\int_{B'}|f(x)|^p\,dx\bigg)^{1/p}\bigg]^{p}\\
=&\big[\|f\|_{L^{p,\kappa}}\big]^{p}.
\end{split}
\end{equation}
By using the estimate \eqref{wanghua2}, for any given ball $B$ in $\mathbb R^n$, we then have
\begin{equation*}
\begin{split}
&\frac{1}{m(B)^{\kappa/q}}\sup_{\lambda>0}\lambda\cdot m\big(\big\{x\in B:\big|T_{\Omega,\alpha}f(x)\big|>\lambda\big\}\big)^{1/q}\\
&\leq \frac{1}{m(B)^{\kappa/q}}\sup_{\lambda>0}\lambda\cdot m\bigg(\bigg\{x\in B:\big|M_{s'}f(x)\big|^{p/q}>\frac{\lambda}{C\|f\|^{1-p/q}_{L^{p,\kappa}}}\bigg\}\bigg)^{1/q}\\
&=\bigg[\frac{1}{m(B)^{\kappa}}\sup_{\lambda>0}\lambda^q\cdot m\bigg(\bigg\{x\in B:\big|M(|f|^{p})(x)\big|>\bigg(\frac{\lambda}{C\|f\|^{1-p/q}_{L^{p,\kappa}}}\bigg)^q\bigg\}\bigg)\bigg]^{1/q}.
\end{split}
\end{equation*}
In view of \eqref{scase} and Lemma \ref{MM}, the above expression is bounded by
\begin{equation*}
\begin{split}
&C\Big[\big(\|f\|^{1-p/q}_{L^{p,\kappa}}\big)^q\cdot\big\||f|^{p}\big\|_{L^{1,\kappa}}\Big]^{1/q}\\
&=C\big[\|f\|_{L^{p,\kappa}}\big]^{1-p/q}\cdot\big[\|f\|_{L^{p,\kappa}}\big]^{p/q}=C\|f\|_{L^{p,\kappa}}.
\end{split}
\end{equation*}
This shows $(i)$ by taking the supremum over all balls $B$ in $\mathbb R^n$.
\end{itemize}
The proof of Theorem \ref{thm2} is now complete.
\end{proof}

Next we deal with the endpoint case within the framework of the Morrey space. For $1\leq p<\infty$ and $0<\kappa<1$, we now define the space $(L\log L)^{p,\kappa}(\mathbb R^n)$ as the set of all locally integrable functions $f$ on $\mathbb R^n$ such that
\begin{equation*}
\begin{split}
\|f\|_{(L\log L)^{p,\kappa}}:=&\sup_{B}\frac{1}{m(B)^{\kappa/p}}\big\|f\big\|_{L^p\log L(B)}<\infty.
\end{split}
\end{equation*}
When $p=1$, we write $(L\log L)^{1,\kappa}(\mathbb R^n)$ and $\|\cdot\|_{(L\log L)^{1,\kappa}}$. This new space was defined and investigated in \cite{sa} and \cite{lida}. By definition, it is obvious that for $(p,\kappa)\in[1,\infty)\times(0,1)$, $(L\log L)^{p,\kappa}(\mathbb R^n)\subset L^{p,\kappa}(\mathbb R^n)$, and
\begin{equation}\label{lplog2}
\|f\|_{L^{p,\kappa}}\leq \|f\|_{(L\log L)^{p,\kappa}}.
\end{equation}
In addition, it is easy to verify that for any $1\leq p<\infty$ and $0<\kappa<1$,
\begin{equation}\label{pp2}
\big\||f|^p\big\|_{(L\log L)^{1,\kappa}}\leq \|f\|^p_{(L\log L)^{p,\kappa}}.
\end{equation}
Recall that the following statement is true:
\begin{equation}\label{Mpp2}
\|M(f)\|_{L^{1,\kappa}}\leq C\|f\|_{(L\log L)^{1,\kappa}},
\end{equation}
for all $f\in (L\log L)^{1,\kappa}(\mathbb R^n)$ with $0<\kappa<1$ (see \cite[Proposition 6.1]{lida} and \cite[Corollary 2.21]{sa}). Based on the results mentioned above, we can prove the following result for the endpoint case $p=s'$.
\begin{thm}\label{thm8}
Let $\Omega\in L^s(\mathbf{S}^{n-1})$ with $1<s\leq\infty$. Let $0<\alpha<n$, $1\leq p<q<\infty$, $0<\kappa<1-{(\alpha p)}/n$ and $1/q=1/p-\alpha/{n(1-\kappa)}$. If $s'=p<n/{\alpha}$, then the inequality
\begin{equation*}
\|T_{\Omega,\alpha}(f)\|_{L^{q,\kappa}}\lesssim\|f\|_{(L\log L)^{p,\kappa}}
\end{equation*}
holds.
\end{thm}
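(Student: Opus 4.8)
The plan is to combine the pointwise domination \eqref{wanghua2} obtained in the proof of Theorem \ref{thm2} with the Orlicz--Morrey estimates \eqref{Mpp2}, \eqref{pp2} and \eqref{lplog2}, mirroring exactly the passage from Theorem \ref{thm1} to Theorem \ref{thm7} but now in the Morrey scale. First I would note that, by \eqref{lplog2}, $(L\log L)^{p,\kappa}(\mathbb R^n)\subset L^{p,\kappa}(\mathbb R^n)$, so every $f\in(L\log L)^{p,\kappa}(\mathbb R^n)$ is admissible in \eqref{wanghua2}; moreover, since $s'=p$ we have $M_{s'}f=\big[M(|f|^p)\big]^{1/p}$, hence $\big[M_{s'}f(x)\big]^{p/q}=\big[M(|f|^p)(x)\big]^{1/q}$ for all $x\in\mathbb R^n$.

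Next, fixing an arbitrary ball $B\subset\mathbb R^n$, I would raise \eqref{wanghua2} to the $q$-th power, integrate over $B$, and divide by $m(B)^{\kappa}$, which yields
\[
\frac{1}{m(B)^{\kappa}}\int_B\big|T_{\Omega,\alpha}f(x)\big|^q\,dx\lesssim\big[\|f\|_{L^{p,\kappa}}\big]^{q-p}\cdot\frac{1}{m(B)^{\kappa}}\int_B M(|f|^p)(x)\,dx.
\]
Taking the $q$-th root and then the supremum over all balls $B$, and using that the factor $\|f\|_{L^{p,\kappa}}$ is independent of $B$ so that $\sup_B(\cdots)^{1/q}=\big(\sup_B\cdots\big)^{1/q}$, this gives
\[
\|T_{\Omega,\alpha}f\|_{L^{q,\kappa}}\lesssim\big[\|f\|_{L^{p,\kappa}}\big]^{1-p/q}\cdot\big[\|M(|f|^p)\|_{L^{1,\kappa}}\big]^{1/q}.
\]

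Finally I would invoke the chain \eqref{Mpp2} followed by \eqref{pp2}, namely $\|M(|f|^p)\|_{L^{1,\kappa}}\lesssim\big\||f|^p\big\|_{(L\log L)^{1,\kappa}}\leq\big[\|f\|_{(L\log L)^{p,\kappa}}\big]^p$, together with \eqref{lplog2} to dominate $\|f\|_{L^{p,\kappa}}$ by $\|f\|_{(L\log L)^{p,\kappa}}$ in the first factor; multiplying the two estimates gives
\[
\|T_{\Omega,\alpha}f\|_{L^{q,\kappa}}\lesssim\big[\|f\|_{(L\log L)^{p,\kappa}}\big]^{1-p/q}\cdot\big[\|f\|_{(L\log L)^{p,\kappa}}\big]^{p/q}=\|f\|_{(L\log L)^{p,\kappa}},
\]
which is the claimed bound. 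The argument is essentially bookkeeping once \eqref{wanghua2} is available; the only point requiring a moment's care is the interchange of the supremum over balls with the power $1/q$, and checking that the Morrey normalization $m(B)^{-\kappa}$ appearing after integrating the pointwise estimate matches precisely the $L^1$--Morrey norm $\|\cdot\|_{L^{1,\kappa}}$ in \eqref{Mpp2}. I do not anticipate any substantive obstacle.
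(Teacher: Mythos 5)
Your proposal is correct and follows essentially the same route as the paper: both use the pointwise bound \eqref{wanghua2} with $s'=p$ so that $[M_{s'}f]^p=M(|f|^p)$, integrate over an arbitrary ball with the Morrey normalization, and then apply \eqref{Mpp2}, \eqref{pp2} and \eqref{lplog2} to convert $\|M(|f|^p)\|_{L^{1,\kappa}}^{1/q}$ and $\|f\|_{L^{p,\kappa}}^{1-p/q}$ into powers of $\|f\|_{(L\log L)^{p,\kappa}}$. The exponent bookkeeping and the interchange of the supremum with the $1/q$-th power are handled exactly as in the paper's argument, so there is nothing to add.
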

\begin{proof}[Proof of Theorem \ref{thm8}]
Taking into account \eqref{wanghua2}, we obtain
\begin{equation*}
\begin{split}
\|T_{\Omega,\alpha}(f)\|_{L^{q,\kappa}}
&\lesssim\sup_{B\subseteq\mathbb R^n}\bigg(\frac{1}{m(B)^{\kappa}}\int_{B}\big|M_{s'}f(x)\big|^{p}dx\bigg)^{1/q} \cdot\big[\|f\|_{L^{p,\kappa}}\big]^{1-p/q}\\
&=\sup_{B\subseteq\mathbb R^n}\bigg(\frac{1}{m(B)^{\kappa}}\int_{B}\big|M(|f|^{p})(x)\big|\,dx\bigg)^{1/{q}}
\cdot\big[\|f\|_{L^{p,\kappa}}\big]^{1-p/q}.
\end{split}
\end{equation*}
It is concluded from \eqref{lplog2}, \eqref{pp2} and \eqref{Mpp2} that
\begin{equation*}
\begin{split}
\|T_{\Omega,\alpha}(f)\|_{L^{q,\kappa}}&\leq C\big\||f|^p\big\|_{(L\log L)^{1,\kappa}}^{1/{q}}\cdot\big[\|f\|_{L^{p,\kappa}}\big]^{1-p/q}\\
&\leq C\big[\|f\|_{(L\log L)^{p,\kappa}}\big]^{p/q}\cdot\big[\|f\|_{(L\log L)^{p,\kappa}}\big]^{1-p/q}=C\|f\|_{(L\log L)^{p,\kappa}}.
\end{split}
\end{equation*}
We are done.
\end{proof}
By using the same procedure as in the proof of Theorems \ref{thm9} and \ref{thm10}, the following results can also be obtained for the critical case $\kappa=1-{(\alpha p)}/n$(see unweighted version of Theorems 2.2 and 3.3 in \cite{wang2}).
\begin{thm}\label{thm5}
Suppose that $\Omega$ satisfies the $L^s$-Dini smoothness condition $\mathfrak{D}_{s}$ with $1<s\le\infty$. Let $0<\alpha<n$, $s'\leq p<n/{\alpha}$, $1/q=1/p-{\alpha}/n$ and $\kappa=p/q=1-{(\alpha p)}/n$. Then the inequality
\begin{equation*}
\|T_{\Omega,\alpha}(f)\|_{\mathrm{BMO}}\lesssim\|f\|_{L^{p,\kappa}}
\end{equation*}
holds.
\end{thm}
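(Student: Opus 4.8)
The plan is to run the argument of the proof of Theorem~\ref{thm9} almost verbatim, replacing the $L^{p}$ bounds there by $L^{p,\kappa}$ bounds and letting the critical relation $\kappa=1-(\alpha p)/n$ (equivalently $\alpha-n/p+n\kappa/p=0$) absorb every power of the radius. Fix a ball $B=B(x_0,r)$ and decompose $f=f_1+f_2$ with $f_1=f\cdot\chi_{4B}$ and $f_2=f\cdot\chi_{(4B)^{\complement}}$, so that
\[
\frac{1}{m(B)}\int_{B}\bigl|T_{\Omega,\alpha}f(x)-(T_{\Omega,\alpha}f)_{B}\bigr|\,dx\leq \mathrm{I_1}+\mathrm{I_2},
\]
where $\mathrm{I_1}$ denotes (twice) the average of $|T_{\Omega,\alpha}f_1|$ over $B$ and $\mathrm{I_2}$ the mean oscillation of $T_{\Omega,\alpha}f_2$ over $B$. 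For $\mathrm{I_1}$ I would proceed exactly as in Theorem~\ref{thm9}: Fubini's theorem together with the polar-coordinate identity \eqref{wang1} gives $\mathrm{I_1}\lesssim m(B)^{\alpha/n-1}\int_{4B}|f(y)|\,dy$, and then H\"older's inequality followed by the definition of the Morrey norm yields $\int_{4B}|f|\leq\|f\|_{L^{p,\kappa}}\,m(4B)^{\kappa/p+1/{p'}}$. Since $m(B)\sim r^{n}$, the resulting power of the radius is $(\alpha-n/p+n\kappa/p)/n=0$ by the criticality of $\kappa$, so $\mathrm{I_1}\lesssim\|f\|_{L^{p,\kappa}}$.

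The real work is in $\mathrm{I_2}$. Writing $\mathrm{I_2}\leq m(B)^{-2}\int_{B}\int_{B}|T_{\Omega,\alpha}f_2(x)-T_{\Omega,\alpha}f_2(y)|\,dy\,dx$, I split $(4B)^{\complement}$ into the annuli $2^{k+1}B\setminus 2^{k}B$ ($k\geq2$), insert $x_0$ through the triangle inequality
\[
\Bigl|\tfrac{\Omega(x-z)}{|x-z|^{n-\alpha}}-\tfrac{\Omega(y-z)}{|y-z|^{n-\alpha}}\Bigr|\leq\Bigl|\tfrac{\Omega(x-z)}{|x-z|^{n-\alpha}}-\tfrac{\Omega(x_0-z)}{|x_0-z|^{n-\alpha}}\Bigr|+\Bigl|\tfrac{\Omega(x_0-z)}{|x_0-z|^{n-\alpha}}-\tfrac{\Omega(y-z)}{|y-z|^{n-\alpha}}\Bigr|,
\]
and apply H\"older on each annulus with the exponent pair $(s,s')$. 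Since here one only has $s'\leq p$ (and not $s'=p$ as in Theorem~\ref{thm9}), I first pass from $L^{s'}(2^{k+1}B)$ to $L^{p}(2^{k+1}B)$, picking up a factor $m(2^{k+1}B)^{1/{s'}-1/p}$, and then bound $\bigl(\int_{2^{k+1}B}|f|^{p}\bigr)^{1/p}\leq\|f\|_{L^{p,\kappa}}\,m(2^{k+1}B)^{\kappa/p}$. For the kernel factor I invoke Lemma~\ref{lem1} with $R=2^{k}r$, which is legitimate because $|x-x_0|<r\leq\tfrac14\cdot 2^{k}r\leq\tfrac14|z-x_0|$ on the annulus; this gives the bound $C(2^{k}r)^{n/s-(n-\alpha)}\bigl[\tfrac{|x-x_0|}{2^{k}r}+\int_{|x-x_0|/2^{k+1}r}^{|x-x_0|/2^{k}r}\tfrac{\omega_{s}(\delta)}{\delta}\,d\delta\bigr]$, and similarly with $y$ in place of $x$.

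Multiplying the kernel factor against the $f$ factor, the total power of $2^{k}r$ equals $n/s-(n-\alpha)+n(1/{s'}-1/p)+n\kappa/p=\alpha-n/p+n\kappa/p=0$, using $1/s+1/{s'}=1$ and the criticality of $\kappa$; thus the radius disappears. Summing $\sum_{k\geq2}2^{-k}\lesssim1$ and telescoping the pairwise disjoint intervals $[\,|x-x_0|/2^{k+1}r,\,|x-x_0|/2^{k}r\,]\subset(0,1)$ against the $\mathfrak{D}_s$-Dini integral, one obtains $\sum_{k\geq2}\int_{2^{k+1}B\setminus 2^{k}B}\bigl|\tfrac{\Omega(x-z)}{|x-z|^{n-\alpha}}-\tfrac{\Omega(y-z)}{|y-z|^{n-\alpha}}\bigr||f(z)|\,dz\lesssim\|f\|_{L^{p,\kappa}}\bigl[1+\int_0^1\tfrac{\omega_{s}(\delta)}{\delta}\,d\delta\bigr]$. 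Averaging this in $x$ and $y$ over $B$ gives $\mathrm{I_2}\lesssim\|f\|_{L^{p,\kappa}}$, and taking the supremum over all balls $B\subset\mathbb R^n$ completes the proof.

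I expect the only genuinely new point, compared with Theorem~\ref{thm9}, to be this bookkeeping in $\mathrm{I_2}$: one must verify that the extra H\"older step needed when $s'<p$, together with the Morrey normalizations $m(2^{k+1}B)^{\kappa/p}$ and $m(4B)^{\kappa/p}$, still leaves a scale-invariant estimate — which is exactly what the identity $\kappa=1-(\alpha p)/n$ guarantees — so that all the decay in $k$ continues to come for free from the Dini condition $\mathfrak{D}_s$, just as in the case $p=n/\alpha$.
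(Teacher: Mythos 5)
Your proposal is correct and follows exactly the route the paper intends: the paper does not write out a proof of Theorem \ref{thm5} but states that it follows "by the same procedure as in the proof of Theorems \ref{thm9} and \ref{thm10}" (citing \cite{wang2}), and your adaptation — the $f_1/f_2$ splitting, Lemma \ref{lem1} with $R=2^kr$, the extra H\"older step from $L^{s'}$ to $L^{p}$ when $s'<p$, and the cancellation of all powers of the radius via $\kappa=1-(\alpha p)/n$ — is precisely that procedure, with the exponent bookkeeping checking out.
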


\begin{thm}\label{thm6}
Suppose that $\Omega\in L^s(\mathbf{S}^{n-1})$ with $1<s\le\infty$. Let $0<\alpha<n$, $s'\leq p<n/{\alpha}$, $1/q=1/p-{\alpha}/n$ and $\kappa=p/q=1-{(\alpha p)}/n$. Then the inequality
\begin{equation*}
\|M_{\Omega,\alpha}(f)\|_{L^{\infty}}\lesssim\|f\|_{L^{p,\kappa}}
\end{equation*}
holds.
\end{thm}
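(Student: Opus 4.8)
The plan is to show that $M_{\Omega,\alpha}f(x)$ is bounded by a constant multiple of $\|f\|_{L^{p,\kappa}}$ \emph{uniformly in} $x$, which is precisely what an $L^{\infty}$ bound requires; this mirrors the proof of Theorem \ref{thm10}, with the Lebesgue norm replaced by the Morrey norm and the exponent bookkeeping arranged so that the critical relation $\kappa=1-\alpha p/n$ does the work. Fix $x\in\mathbb R^n$ and $r>0$. By H\"older's inequality with the conjugate pair $(s,s')$ followed by \eqref{omega88}, the quantity $\frac{1}{m(B(x,r))^{1-\alpha/n}}\int_{|x-y|<r}|\Omega(x-y)||f(y)|\,dy$ is dominated by
\[
\frac{C\|\Omega\|_{L^s(\mathbf{S}^{n-1})}\,r^{n/s}}{m(B(x,r))^{1-\alpha/n}}\Big(\int_{B(x,r)}|f(y)|^{s'}\,dy\Big)^{1/s'}.
\]

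Next I would control the local $L^{s'}$ norm of $f$ by the Morrey norm. Since $s'\le p$, H\"older's inequality with exponents $p/s'$ and $(p/s')'$ (the case $s'=p$ being an equality) gives $\big(\int_{B}|f|^{s'}\big)^{1/s'}\le m(B)^{1/s'-1/p}\big(\int_B|f|^p\big)^{1/p}$, and then the definition of $\|\cdot\|_{L^{p,\kappa}}$ yields $\big(\int_B|f|^p\big)^{1/p}\le m(B)^{\kappa/p}\|f\|_{L^{p,\kappa}}$. Combining, $\big(\int_{B(x,r)}|f|^{s'}\big)^{1/s'}\le C\,r^{\,n(1/s'-1/p+\kappa/p)}\|f\|_{L^{p,\kappa}}$, using $m(B(x,r))\approx r^n$.

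Putting the two estimates together, and recalling $n/s+n/s'=n$, the total power of $r$ that appears is $-n(1-\alpha/n)+n/s+n(1/s'-1/p+\kappa/p)=\alpha-\frac{n(1-\kappa)}{p}$. The hypothesis $\kappa=1-\alpha p/n$ is exactly the statement that $\frac{n(1-\kappa)}{p}=\alpha$, so this exponent vanishes and we obtain $\frac{1}{m(B(x,r))^{1-\alpha/n}}\int_{|x-y|<r}|\Omega(x-y)||f(y)|\,dy\le C\|\Omega\|_{L^s(\mathbf{S}^{n-1})}\|f\|_{L^{p,\kappa}}$ with $C$ independent of $x$ and $r$. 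Taking the supremum over $r>0$ and then the essential supremum over $x\in\mathbb R^n$ finishes the proof.

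I do not expect a serious obstacle here: the argument is a direct H\"older-plus-bookkeeping computation, and the only point to watch is that the two sub-cases $s'=p$ and $s'<p$ are handled uniformly (in the first, $p/s'=1$ and the intermediate H\"older step is an identity) and that $p<n/\alpha$ guarantees $1-\kappa=\alpha p/n<1$, so that $L^{p,\kappa}(\mathbb R^n)$ with this $\kappa$ indeed lies in the admissible range $0<\kappa<1$. Alternatively, under the stronger Dini hypothesis $\Omega\in\mathfrak{D}_{s}$ one could instead deduce the result from Theorem \ref{thm5} together with the equivalence $M_{\Omega,\alpha}f\in L^{\infty}\iff T_{\Omega,\alpha}f\in\mathrm{BMO}$ recorded in the remark following Theorem \ref{thm10}, but the direct argument above has the advantage of requiring only $\Omega\in L^s(\mathbf{S}^{n-1})$.
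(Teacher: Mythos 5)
Your proof is correct, and it is essentially the route the paper intends: the paper proves Theorem \ref{thm6} by invoking the same procedure as Theorem \ref{thm10} (H\"older with the pair $(s,s')$, the estimate \eqref{omega88}, a further H\"older step from $L^{s'}$ to $L^{p}$ on the ball, and then the Morrey norm), which is exactly your computation, with the critical relation $\kappa=1-\alpha p/n$ killing the power of $r$. The only cosmetic difference is that you treat $s'=p$ and $s'<p$ uniformly rather than as separate cases, which is fine.
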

We now claim that the inclusion relation
\begin{equation*}
L^{p,\infty}(\mathbb R^n)\hookrightarrow L^{q,\kappa}(\mathbb R^n)
\end{equation*}
holds with $1\leq q<p$ and $\kappa=1-q/p$. Moreover, for any $f\in L^{p,\infty}(\mathbb R^n)$ with $1<p<\infty$,  there exists a constant $C=C(p,q)>0$ such that
\begin{equation}\label{cpq}
\|f\|_{L^{q,\kappa}}\leq C(p,q)\|f\|_{L^{p,\infty}}
\end{equation}
with $1\leq q<p$ and $\kappa=1-q/p$. In fact, for any ball $B$ in $\mathbb R^n$, we have
\begin{equation*}
\begin{split}
\|f\cdot\chi_{B}\|_{L^q}&=\bigg(\int_{B}|f(x)|^q\,dx\bigg)^{1/q}\\
&=\bigg(\int_0^{\infty}q\lambda^{q-1}m\big(\big\{x\in B:|f(x)|>\lambda\big\}\big)\,d\lambda\bigg)^{1/q}\\
&=\bigg(\int_{0}^{\sigma}\cdots+\int_{\sigma}^{\infty}\cdots\bigg)^{1/q},
\end{split}
\end{equation*}
where $\sigma>0$ is a constant to be chosen later. Clearly, the first integral in the bracket is bounded by
\begin{equation*}
\begin{split}
\int_0^{\sigma}q\lambda^{q-1}m(B)\,d\lambda=\sigma^q\cdot m(B).
\end{split}
\end{equation*}
Using the fact that $q-p<0$, the second integral in the bracket is then controlled by
\begin{equation*}
\begin{split}
&\int_{\sigma}^{\infty}q\lambda^{q-1}m\big(\big\{x\in\mathbb R^n:|f(x)|>\lambda\big\}\big)\,d\lambda\\
&\leq\int_{\sigma}^{\infty}q\lambda^{q-1}\Big(\frac{\|f\|_{L^{p,\infty}}}{\lambda}\Big)^pd\lambda\\
&=\frac{q}{p-q}\Big(\sigma^{q-p}\cdot\|f\|^p_{L^{p,\infty}}\Big).
\end{split}
\end{equation*}
We now choose $\sigma$ such that $\sigma^q\cdot m(B)=\sigma^{q-p}\cdot\|f\|^p_{L^{p,\infty}}$. That is,
\begin{equation*}
\sigma:=\frac{\|f\|_{L^{p,\infty}}}{m(B)^{1/p}}.
\end{equation*}
Hence
\begin{equation*}
\begin{split}
\|f\cdot\chi_{B}\|_{L^q}&\leq\Big(1+\frac{q}{p-q}\Big)^{1/q}\Big(\sigma^q\cdot m(B)\Big)^{1/q}\\
&=\Big(\frac{p}{p-q}\Big)^{1/q}m(B)^{1/q-1/p}\|f\|_{L^{p,\infty}},
\end{split}
\end{equation*}
which is equivalent to that
\begin{equation*}
\frac{1}{m(B)^{1/q-1/p}}\|f\cdot\chi_{B}\|_{L^q}\leq \Big(\frac{p}{p-q}\Big)^{1/q}\|f\|_{L^{p,\infty}}.
\end{equation*}
This gives the desired estimate \eqref{cpq} and completes the proof of the claim, by taking the supremum over all balls $B$ in $\mathbb R^n$.
As a special case of \eqref{cpq}, when $0<\alpha<n$ and $p^{*}=n/{\alpha}$, one has
\begin{equation}\label{cpq2}
\|f\|_{L^{p,\kappa}}\leq C\|f\|_{L^{p^{*},\infty}}
\end{equation}
with $1\leq p<n/{\alpha}$ and $\kappa=1-{(\alpha p)}/n$.

Finally, in view of \eqref{cpq2}, as an immediate consequence of Theorems \ref{thm5} and \ref{thm6}, we have the following results.
\begin{cor}\label{cor5}
Suppose that $\Omega$ satisfies the $L^s$-Dini smoothness condition $\mathfrak{D}_{s}$ with $s>n/{(n-\alpha)}$. Let $0<\alpha<n$ and $p=n/{\alpha}$. Then the inequality
\begin{equation*}
\|T_{\Omega,\alpha}(f)\|_{\mathrm{BMO}}\lesssim\|f\|_{L^{p,\infty}}
\end{equation*}
holds.
\end{cor}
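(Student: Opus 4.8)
The plan is to obtain Corollary~\ref{cor5} as a direct consequence of Theorem~\ref{thm5} together with the embedding~\eqref{cpq2}; no fresh estimate is needed. The only bookkeeping point is that the exponent $p=n/\alpha$ appearing in the statement is the \emph{critical} index, at which Theorem~\ref{thm5} is not available; so one first descends to a subcritical exponent where Theorem~\ref{thm5} applies, and then climbs back up to $L^{n/\alpha,\infty}$ via~\eqref{cpq2}.

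Concretely, first I would fix an auxiliary exponent $p_0$ satisfying $s'\le p_0<n/\alpha$. Such a $p_0$ exists because the hypothesis $s>n/{(n-\alpha)}$ is equivalent to $s'<n/\alpha$, and one may simply take $p_0=s'$ (note $1\le s'<\infty$ since $1<s\le\infty$). Setting $1/q_0:=1/p_0-\alpha/n$ and $\kappa_0:=p_0/q_0=1-{(\alpha p_0)}/n>0$, the data $\Omega$, $\alpha$, $p_0$, $q_0$, $\kappa_0$ fulfill every hypothesis of Theorem~\ref{thm5} (in particular $\Omega\in\mathfrak{D}_s$), and hence
\[
\|T_{\Omega,\alpha}(f)\|_{\mathrm{BMO}}\lesssim\|f\|_{L^{p_0,\kappa_0}}.
\]
Next I would apply the embedding~\eqref{cpq2} with its exponent taken to be $p_0$ and $p^{*}=n/\alpha$: since $1\le p_0<n/\alpha$ and $\kappa_0=1-{(\alpha p_0)}/n$, it yields
\[
\|f\|_{L^{p_0,\kappa_0}}\le C\,\|f\|_{L^{n/\alpha,\infty}}=C\,\|f\|_{L^{p,\infty}}.
\]
Chaining the two displayed inequalities gives $\|T_{\Omega,\alpha}(f)\|_{\mathrm{BMO}}\lesssim\|f\|_{L^{p,\infty}}$, which is the assertion of Corollary~\ref{cor5}. (An entirely parallel argument, using Theorem~\ref{thm6} in place of Theorem~\ref{thm5}, would give the companion bound $\|M_{\Omega,\alpha}(f)\|_{L^{\infty}}\lesssim\|f\|_{L^{p,\infty}}$.)

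There is no real obstacle here; the proof is a two-step composition of results already established. The only subtlety worth flagging is the strict inequality $s>n/{(n-\alpha)}$: it is precisely what makes the interval $[s',n/\alpha)$ nonempty, so that an admissible subcritical exponent $p_0$ with $\kappa_0>0$ can be chosen; in the borderline case $s=n/{(n-\alpha)}$ one has $s'=n/\alpha$, Theorem~\ref{thm5} is no longer applicable below the critical index, and the corresponding mapping property must instead be read off from Theorem~\ref{thm9}.
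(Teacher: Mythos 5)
Your proposal is correct and is precisely the argument the paper intends: Corollary~\ref{cor5} is read off by applying Theorem~\ref{thm5} at a subcritical exponent $p_0\in[s',n/\alpha)$ (nonempty exactly because $s>n/{(n-\alpha)}$, i.e. $s'<n/\alpha$) with $\kappa_0=1-{(\alpha p_0)}/n$, and then composing with the embedding~\eqref{cpq2} into $L^{n/\alpha,\infty}(\mathbb R^n)$. Your closing remark about the borderline case $s=n/{(n-\alpha)}$ is also accurate.
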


\begin{cor}\label{cor6}
Suppose that $\Omega\in L^s(\mathbf{S}^{n-1})$ with $s>n/{(n-\alpha)}$. Let $0<\alpha<n$ and $p=n/{\alpha}$. Then the inequality
\begin{equation*}
\|M_{\Omega,\alpha}(f)\|_{L^{\infty}}\lesssim\|f\|_{L^{p,\infty}}
\end{equation*}
holds.
\end{cor}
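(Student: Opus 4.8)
The plan is to obtain Corollary \ref{cor6} as an immediate consequence of Theorem \ref{thm6} together with the embedding \eqref{cpq2}, with no new computation needed; this is exactly what the placement of the corollary in the text indicates. The only place where the hypothesis $s>n/{(n-\alpha)}$ is genuinely used is in choosing an auxiliary exponent, so I would begin there and then simply chain two estimates.

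First I would fix an auxiliary exponent $p_0$. Since $s>n/{(n-\alpha)}$, passing to conjugate exponents gives $s'<\big(n/{(n-\alpha)}\big)'=n/{\alpha}$, so the half-open interval $[s',n/{\alpha})$ is nonempty; I take any $p_0$ with $s'\leq p_0<n/{\alpha}$ (for concreteness $p_0=s'$). I then set $1/{q_0}=1/{p_0}-\alpha/n$ and $\kappa_0:=p_0/{q_0}=1-{(\alpha p_0)}/n$, and note $0<\kappa_0<1$ because $0<\alpha p_0/n<1$, so $\kappa_0$ is an admissible Morrey parameter. Next I would apply Theorem \ref{thm6} with the exponent $p_0$ in place of $p$: since $\Omega\in L^s(\mathbf{S}^{n-1})$ with $1<s\leq\infty$, $0<\alpha<n$, $s'\leq p_0<n/{\alpha}$ and $\kappa_0=p_0/{q_0}=1-{(\alpha p_0)}/n$, Theorem \ref{thm6} yields $\|M_{\Omega,\alpha}(f)\|_{L^{\infty}}\lesssim\|f\|_{L^{p_0,\kappa_0}}$. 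Then I would invoke the embedding \eqref{cpq2} with $p=p_0$ and $p^{*}=n/{\alpha}$, so that $\kappa=1-{(\alpha p_0)}/n=\kappa_0$, giving $\|f\|_{L^{p_0,\kappa_0}}\leq C\|f\|_{L^{n/{\alpha},\infty}}$. Combining the two displays produces $\|M_{\Omega,\alpha}(f)\|_{L^{\infty}}\lesssim\|f\|_{L^{n/{\alpha},\infty}}=\|f\|_{L^{p,\infty}}$, which is precisely the assertion, since here $p=n/{\alpha}$.

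There is essentially no hard step; the argument is a two-line combination of results already proved. The one point that deserves care is the role of the \emph{strict} inequality $s>n/{(n-\alpha)}$: it is exactly what guarantees $s'<n/{\alpha}$ and hence the existence of an admissible exponent $p_0$ with $s'\leq p_0<n/{\alpha}$, allowing Theorem \ref{thm6} to be applied. When $s=n/{(n-\alpha)}$ one has $s'=n/{\alpha}$ and this route breaks down, which is why that borderline value is instead handled directly in Theorem \ref{thm10} (its Case 1). I would also remark that only the $L^s$-integrability of $\Omega$ is invoked here — no Dini condition — in agreement with the hypotheses of both Corollary \ref{cor6} and Theorem \ref{thm6}.
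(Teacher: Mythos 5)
Your argument is correct and is exactly the paper's intended derivation: Corollary \ref{cor6} is stated there as an immediate consequence of Theorem \ref{thm6} combined with the embedding \eqref{cpq2}, which is precisely your two-step chain with an admissible $p_0\in[s',n/\alpha)$ and $\kappa_0=1-\alpha p_0/n$. Your side remark on why the strict inequality $s>n/(n-\alpha)$ is needed (and why $s=n/(n-\alpha)$ is instead covered by Theorem \ref{thm10}) is also accurate.
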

\section{Applications to several inequalities on $\mathbb R^n$}
\subsection{Hardy--Littlewood--Sobolev inequalities}
The classical Hardy--Littlewood--Sobolev inequality on $\mathbb R^n$ says that for any $0<\lambda<n$ and all measurable functions $(f,g)\in L^p\times L^q$, we have
\begin{equation}\label{HLS}
\bigg|\int_{\mathbb R^n}\int_{\mathbb R^n}\frac{f(x)\cdot g(y)}{|x-y|^{\lambda}}\,dxdy\bigg|\leq C({p,q,\lambda,n})\|f\|_{L^p}\cdot\|g\|_{L^q},
\end{equation}
whenever $1<p,q<\infty$ and $1/p+1/q+\lambda/n=2$. This inequality appears in many areas of analysis, often in their dual forms as Sobolev inequalities and Onofri inequalities. For more information about the sharp constants and the optimizers in \eqref{HLS}, the reader is referred to \cite{li}.

By the results shown in Section \ref{sec21}, we can extend \eqref{HLS} to the following:
\begin{thm}\label{31}
Suppose that $\Omega\in L^s(\mathbf{S}^{n-1})$ with $1<s\leq\infty$. Let $0<\lambda<n$, $s'<p,q<\infty$ and $1/p+1/q+\lambda/n=2$. Then for any $f\in L^p(\mathbb R^n)$ and $g\in L^q(\mathbb R^n)$, there exists a constant $C>0$ independent of $f$ and $g$ such that
\begin{equation}\label{HLSWang}
\bigg|\int_{\mathbb R^n}\int_{\mathbb R^n}\frac{\Omega(x-y)f(x)\cdot g(y)}{|x-y|^{\lambda}}\,dxdy\bigg|
\leq C\|f\|_{L^p}\cdot\|g\|_{L^q}.
\end{equation}
\end{thm}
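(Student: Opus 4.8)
The plan is to recognise the inner integral as a homogeneous fractional integral operator and then invoke Theorem~\ref{thm1}(ii) together with H\"older's inequality. Put $\alpha:=n-\lambda$, so $\alpha\in(0,n)$ since $0<\lambda<n$. Because $|\Omega|$ is again homogeneous of degree zero with $\big\||\Omega|\big\|_{L^s(\mathbf S^{n-1})}=\|\Omega\|_{L^s(\mathbf S^{n-1})}$, it suffices to bound the double integral with $f,g,\Omega$ replaced by $|f|,|g|,|\Omega|$. The integrand is then nonnegative, and by Tonelli's theorem we may integrate in $y$ first, obtaining
\begin{equation*}
\int_{\mathbb R^n}\int_{\mathbb R^n}\frac{|\Omega(x-y)|\,|f(x)|\,|g(y)|}{|x-y|^{\lambda}}\,dx\,dy
=\int_{\mathbb R^n}|f(x)|\cdot T_{|\Omega|,\alpha}(|g|)(x)\,dx,
\end{equation*}
where the inner integral is exactly $T_{|\Omega|,\alpha}(|g|)(x)=\int_{\mathbb R^n}\frac{|\Omega(x-y)|}{|x-y|^{n-\alpha}}|g(y)|\,dy$.

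Next I apply H\"older's inequality with the conjugate pair $(p,p')$ (legitimate since $p>1$: indeed $s'<p$ with $1<s\le\infty$ forces $p>1$, as $s'=1$ when $s=\infty$ and $s'>1$ otherwise), which gives the bound $\|f\|_{L^p}\cdot\big\|T_{|\Omega|,\alpha}(|g|)\big\|_{L^{p'}}$. It remains to check that Theorem~\ref{thm1}(ii) applies to $T_{|\Omega|,\alpha}$ acting from $L^{q}$ to $L^{p'}$. The balance $1/p+1/q+\lambda/n=2$ together with $\alpha=n-\lambda$ rewrites as $1/p+1/q-1=\alpha/n$, i.e.
\begin{equation*}
\frac{1}{p'}=1-\frac1p=\frac1q-\frac{\alpha}{n},
\end{equation*}
which is precisely the scaling relation required. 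The hypothesis $s'<q$ is assumed; the condition $q<n/\alpha$ is equivalent to $1/q>1/p+1/q-1$, i.e. to $p>1$, which holds; and the condition $q<p'$ needed in Theorem~\ref{thm1} is equivalent to $1/p+1/q>1$, which holds since $1/p+1/q=1+\alpha/n>1$. Hence all hypotheses of Theorem~\ref{thm1}(ii) are met (with $|\Omega|$ in place of $\Omega$), giving $\big\|T_{|\Omega|,\alpha}(|g|)\big\|_{L^{p'}}\lesssim\|g\|_{L^{q}}$.

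Combining the two displays yields
\begin{equation*}
\bigg|\int_{\mathbb R^n}\int_{\mathbb R^n}\frac{\Omega(x-y)f(x)g(y)}{|x-y|^{\lambda}}\,dx\,dy\bigg|\le C\|f\|_{L^p}\|g\|_{L^q},
\end{equation*}
which is \eqref{HLSWang}. (One could equally integrate in $x$ first, landing on $T_{\widetilde\Omega,\alpha}f(y)$ with $\widetilde\Omega(z):=\Omega(-z)\in L^s(\mathbf S^{n-1})$, and then apply H\"older with $(q,q')$; the two routes are symmetric and both work, the index checks being identical after swapping $p\leftrightarrow q$.) I do not anticipate any real obstacle: the argument is essentially a repackaging of Theorem~\ref{thm1}(ii), and the only points requiring care are the Tonelli justification for interchanging the order of integration and the routine verification that the exponent pair $(q,p')$ lies in the admissible range $s'<q<n/\alpha$, $q<p'$ demanded by that theorem.
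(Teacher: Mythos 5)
Your proposal is correct and follows essentially the same route as the paper: rewrite the double integral as $\int f\cdot T_{\Omega,n-\lambda}(g)$, apply H\"older with $(p,p')$, and invoke Theorem \ref{thm1}(ii) after checking that $1/{p'}=1/q-(n-\lambda)/n$ and $s'<q<n/{(n-\lambda)}$. The only differences are cosmetic: you make the Tonelli justification and the $p>1$, $q<n/\alpha$ index checks explicit, while the paper also writes out the symmetric dual estimate via $T_{\widetilde\Omega,n-\lambda}(f)$, which you correctly note is not needed.
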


\begin{proof}
Using H\"{o}lder's inequality, we have
\begin{equation*}
\begin{split}
&\bigg|\int_{\mathbb R^n}\int_{\mathbb R^n}\frac{\Omega(x-y)f(x)\cdot g(y)}{|x-y|^{\lambda}}\,dxdy\bigg|\\
&=\bigg|\int_{\mathbb R^n}f(x)\bigg(\int_{\mathbb R^n}\frac{\Omega(x-y)g(y)}{|x-y|^{\lambda}}\,dy\bigg)dx\bigg|\\
&=\bigg|\int_{\mathbb R^n}f(x)\cdot T_{\Omega,n-\lambda}(g)(x)\,dx\bigg|\\
&\leq\|f\|_{L^p}\cdot\|T_{\Omega,n-\lambda}(g)\|_{L^{p'}}.
\end{split}
\end{equation*}
Observe that $1/{p'}=1/q-{(n-\lambda)}/n$ and $s'<q<n/{(n-\lambda)}$. By the second part of Theorem \ref{thm1}, we get
\begin{equation*}
\|T_{\Omega,n-\lambda}(g)\|_{L^{p'}}\leq C\|g\|_{L^q}.
\end{equation*}
It is easy to see that (by changing the order of integration)
\begin{equation*}
\int_{\mathbb R^n}f(x)\cdot T_{\Omega,n-\lambda}(g)(x)\,dx=\int_{\mathbb R^n}T_{\widetilde{\Omega},n-\lambda}(f)(y)\cdot g(y)\,dy,
\end{equation*}
where $\widetilde{\Omega}(x):=\Omega(-x)$. Obviously, under the same assumptions as in Theorem \ref{thm1}, the conclusion of Theorem \ref{thm1} also holds for $\widetilde{\Omega}(x)$. This fact along with H\"{o}lder's inequality yields
\begin{equation*}
\begin{split}
\bigg|\int_{\mathbb R^n}f(x)\cdot T_{\Omega,n-\lambda}(g)(x)\,dx\bigg|
&=\bigg|\int_{\mathbb R^n}T_{\widetilde{\Omega},n-\lambda}(f)(y)\cdot g(y)\,dy\bigg|\\
&\leq \|T_{\widetilde{\Omega},n-\lambda}(f)\|_{L^{q'}}\|g\|_{L^q}\\
&\leq C\|f\|_{L^p}\cdot\|g\|_{L^q},
\end{split}
\end{equation*}
where in the last inequality we have used the fact that $1/{q'}=1/p-{(n-\lambda)}/n$ and $s'<p<n/{(n-\lambda)}$. From this, the desired inequality follows.
\end{proof}
Observe that \eqref{HLSWang} may be rewritten as
\begin{equation*}
\|f\cdot T_{\Omega,n-\lambda}(g)\|_{L^1}\leq C\|f\|_{L^p}\cdot\|g\|_{L^q},
\end{equation*}
for any $f\in L^p(\mathbb R^n)$ and $g\in L^q(\mathbb R^n)$ with $p,q>s'$. Equivalently, when $0<\alpha<n$ and $1/p+1/q=1+\alpha/n$, we know that
\begin{equation}\label{L1norm}
\|f\cdot T_{\Omega,\alpha}(g)\|_{L^1}\leq C\|f\|_{L^p}\cdot\|g\|_{L^q},
\end{equation}
for any $f\in L^p(\mathbb R^n)$ and $g\in L^q(\mathbb R^n)$ with $p,q>s'$. Let us now consider more general situation. Recall that the following two estimates hold (see \cite[p.11 and p.16]{grafakos}).
\begin{lem}\label{weakHold}
Let $1\leq p,q<\infty$ and $0<r<\infty$ such that $1/r=1/p+1/q$. Then we have
\begin{enumerate}
  \item (H\"older's inequality for $L^p$ spaces) $\|F\cdot G\|_{L^r}\leq \|F\|_{L^p}\cdot\|G\|_{L^q};$
  \item (H\"older's inequality for weak $L^p$ spaces) $\|F\cdot G\|_{L^{r,\infty}}\leq C\|F\|_{L^p}\cdot\|G\|_{L^{q,\infty}}$.
\end{enumerate}
\end{lem}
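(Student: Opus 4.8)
The plan is to prove the two inequalities separately; both are classical and reduce to the ordinary Hölder inequality once the exponents are arranged correctly. For part $(1)$ I would write $\|F\cdot G\|_{L^r}^r=\int_{\mathbb R^n}|F(x)|^r|G(x)|^r\,dx$ and apply the usual Hölder inequality with the exponents $p/r$ and $q/r$. These are admissible and conjugate: from $1/r=1/p+1/q$ we get $r/p+r/q=1$, and since $1/r>1/p$ and $1/r>1/q$ we have $p/r>1$ and $q/r>1$. Hence $\int|F|^r|G|^r\le\big(\int|F|^p\big)^{r/p}\big(\int|G|^q\big)^{r/q}$, and taking $r$-th roots yields $(1)$ with constant $1$. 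This step is entirely routine.

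For part $(2)$ I would argue through the distribution function. For each $s>0$ one has the elementary set inclusion $\{x:|F(x)G(x)|>\lambda\}\subseteq\{x:|F(x)|>s\}\cup\{x:|G(x)|>\lambda/s\}$, so by Chebyshev's inequality applied to $F\in L^p$ and by the definition of the weak norm of $G$,
\[
m\big(\{|FG|>\lambda\}\big)\le\frac{\|F\|_{L^p}^p}{s^p}+\Big(\frac{s}{\lambda}\Big)^{q}\|G\|_{L^{q,\infty}}^{q}.
\]
I would then minimize the right-hand side over $s>0$; the optimal choice is $s\sim\big(\|F\|_{L^p}^p/\|G\|_{L^{q,\infty}}^q\big)^{1/(p+q)}\lambda^{q/(p+q)}$, and, using $r=pq/(p+q)$ together with $r/p=q/(p+q)$ and $r/q=p/(p+q)$, substituting this value gives $\lambda^{r}\,m\big(\{|FG|>\lambda\}\big)\le C(p,q)\,\|F\|_{L^p}^{r}\,\|G\|_{L^{q,\infty}}^{r}$ uniformly in $\lambda>0$. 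Taking the supremum over $\lambda>0$ and then $r$-th roots yields $(2)$.

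The only mildly delicate point is the bookkeeping of exponents in the optimization step of $(2)$ and verifying that the constant $C$ depends only on $p$ and $q$ (equivalently on $r$), not on $F$ or $G$; this is a routine one-variable minimization and presents no genuine obstacle. An equivalent and perhaps tidier route uses nonincreasing rearrangements: from $(FG)^*(t)\le F^*(t/2)G^*(t/2)$ and the identity $t^{1/r}=t^{1/p}t^{1/q}$ one gets $t^{1/r}(FG)^*(t)\le 2^{1/p+1/q}\|F\|_{L^{p,\infty}}\|G\|_{L^{q,\infty}}$, and one then invokes the embedding $\|F\|_{L^{p,\infty}}\le\|F\|_{L^p}$. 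Either way, the statement may also simply be quoted from \cite[p.11 and p.16]{grafakos}.
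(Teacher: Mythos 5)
Your proposal is correct. Note, however, that the paper does not prove this lemma at all: it simply records the two estimates with a pointer to \cite[p.11 and p.16]{grafakos}, so there is no argument in the text to compare yours against step by step. What you supply is a complete, standard, self-contained derivation. Part $(1)$ is exactly the usual reduction: since $1/r=1/p+1/q$ with $p,q<\infty$ forces $p/r,q/r>1$ and $r/p+r/q=1$, applying H\"older to $|F|^r|G|^r$ with exponents $p/r$ and $q/r$ gives the inequality with constant $1$ (and the argument is insensitive to whether $r\ge 1$ or $r<1$, which matters here since $r$ can be less than $1$). Part $(2)$ via the inclusion $\{|FG|>\lambda\}\subseteq\{|F|>s\}\cup\{|G|>\lambda/s\}$, Chebyshev for $F$, the weak-norm bound for $G$, and optimization in $s$ is the classical proof; your exponent bookkeeping ($r=pq/(p+q)$, $r/p=q/(p+q)$, $r/q=p/(p+q)$) checks out, the resulting constant depends only on $p,q$, and the degenerate cases $\|F\|_{L^p}=0$ or $\|G\|_{L^{q,\infty}}\in\{0,\infty\}$ are trivial. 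The alternative route through nonincreasing rearrangements, using $(FG)^*(t)\le F^*(t/2)G^*(t/2)$ and $\|F\|_{L^{p,\infty}}\le\|F\|_{L^p}$, is equally valid and is essentially the proof given in Grafakos. In short: where the paper quotes, you prove, and either of your two arguments would serve as a legitimate substitute for the citation.
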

Arguing as in the proof of Theorem \ref{31}, we can show that the $L^1$ norm in \eqref{L1norm} can be replaced by the general $L^r$ norm (or $L^{r,\infty}$ norm).
\begin{thm}\label{thm15}
Suppose that $\Omega\in L^s(\mathbf{S}^{n-1})$ with $1<s\leq\infty$. Let $0<\alpha<n$, $s'\leq p,q<\infty$ and $1/p+1/q=1/r+\alpha/n$ with $0<r<\min\{p,q\}$.

$(i)$ If $s'<p,q<\infty$, then for any $f\in L^p(\mathbb R^n)$ and $g\in L^q(\mathbb R^n)$, there exists a constant $C>0$ independent of $f$ and $g$ such that
\begin{equation*}
\|f\cdot T_{\Omega,\alpha}(g)\|_{L^{r}}\leq C\|f\|_{L^p}\cdot\|g\|_{L^q}.
\end{equation*}

$(ii)$ If $s'=p$ and $s'<q<\infty$(or $s'=q$ and $s'<p<\infty$), then for any $f\in L^p(\mathbb R^n)$ and $g\in L^q(\mathbb R^n)$, there exists a constant $C>0$ independent of $f$ and $g$ such that
\begin{equation*}
\|f\cdot T_{\Omega,\alpha}(g)\|_{L^{r,\infty}}\leq C\|f\|_{L^p}\cdot\|g\|_{L^q}.
\end{equation*}
\end{thm}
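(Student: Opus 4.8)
The plan is to reduce both statements to the Lebesgue-space mapping properties of $T_{\Omega,\alpha}$ established in Theorem \ref{thm1}, used together with the two H\"older inequalities recorded in Lemma \ref{weakHold}; this is precisely the mechanism already employed in the proof of Theorem \ref{31}, now carried out for a general target exponent $r$ rather than for $r=1$.

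First I would extract the elementary arithmetic consequences of the scaling identity. Since $1/p+1/q=1/r+\alpha/n$ and $0<r<\min\{p,q\}$, we have $1/r-1/p>0$ and $1/r-1/q>0$, hence $1/q=(1/r-1/p)+\alpha/n>\alpha/n$ and similarly $1/p>\alpha/n$; thus $p,q<n/\alpha$, so Theorem \ref{thm1} is applicable in every case below. Then I would introduce the exponent $q_0$ defined by $1/q_0:=1/q-\alpha/n=1/r-1/p$, noting that $q<q_0<\infty$ and $1/r=1/p+1/q_0$, which is exactly the index relation required to split the pointwise product $f\cdot T_{\Omega,\alpha}(g)$ by H\"older against the pair $(p,q_0)$.

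For part $(i)$, with $s'<p,q<\infty$, I would apply H\"older's inequality for $L^p$ spaces (Lemma \ref{weakHold}(1)) with $F=f$ and $G=T_{\Omega,\alpha}(g)$ to get
\[
\|f\cdot T_{\Omega,\alpha}(g)\|_{L^{r}}\le\|f\|_{L^{p}}\,\|T_{\Omega,\alpha}(g)\|_{L^{q_0}},
\]
and then, since $s'<q<n/\alpha$ and $1/q_0=1/q-\alpha/n$, invoke Theorem \ref{thm1}$(ii)$ to conclude $\|T_{\Omega,\alpha}(g)\|_{L^{q_0}}\le C\|g\|_{L^{q}}$; combining the two inequalities proves $(i)$. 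For part $(ii)$ there are two subcases. If $s'=p<q$, the argument of part $(i)$ still applies verbatim (it used $s'<q<n/\alpha$ but nothing about $p$ beyond $1\le p<\infty$), so one even obtains the strong bound $\|f\cdot T_{\Omega,\alpha}(g)\|_{L^{r}}\le C\|f\|_{L^{p}}\|g\|_{L^{q}}$, which a fortiori yields the claimed $L^{r,\infty}$ estimate. If $s'=q<p$, then $g\in L^{q}=L^{s'}$ with $s'=q<n/\alpha$, so Theorem \ref{thm1}$(i)$ gives only the weak estimate $\|T_{\Omega,\alpha}(g)\|_{L^{q_0,\infty}}\le C\|g\|_{L^{q}}$, and I would finish by applying H\"older's inequality for weak $L^p$ spaces (Lemma \ref{weakHold}(2)) with $f\in L^{p}$ in the strong slot and $T_{\Omega,\alpha}(g)\in L^{q_0,\infty}$ in the weak slot:
\[
\|f\cdot T_{\Omega,\alpha}(g)\|_{L^{r,\infty}}\le C\,\|f\|_{L^{p}}\,\|T_{\Omega,\alpha}(g)\|_{L^{q_0,\infty}}\le C\,\|f\|_{L^{p}}\,\|g\|_{L^{q}}.
\]

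I do not expect any genuinely difficult step: the proof is bookkeeping of exponents followed by the citations of Theorem \ref{thm1} and Lemma \ref{weakHold}. The one point that must be handled with care is the interplay between the strong/weak dichotomy of Theorem \ref{thm1} and the structure of the weak-type H\"older inequality: the factor obtained by applying $T_{\Omega,\alpha}$ to a function in $L^{s'}$ is only weakly bounded, so it must occupy the weak slot of Lemma \ref{weakHold}(2) while the remaining factor must sit in a genuine $L^p$ space — which is exactly why the hypotheses of $(ii)$ require that only one of $p,q$ be equal to $s'$.
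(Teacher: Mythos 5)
Your proposal is correct and follows exactly the route the paper intends: its one-line proof cites Theorem \ref{thm1} together with Lemma \ref{weakHold}, and your argument simply fills in the bookkeeping — introducing $q_0$ with $1/q_0=1/q-\alpha/n=1/r-1/p$, applying strong H\"older plus Theorem \ref{thm1}$(ii)$ in the strong cases, and weak H\"older plus Theorem \ref{thm1}$(i)$ when $s'=q$. Your observation that the subcase $s'=p<q$ actually yields the strong $L^r$ bound is a correct (and slightly sharper) remark consistent with the stated weak conclusion.
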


\begin{thm}\label{thm16}
Suppose that $\Omega\in L^s(\mathbf{S}^{n-1})$ with $1<s\leq\infty$. Let $0<\alpha<n$, $s'\leq p,q<\infty$ and $1/p+1/q=1/r+\alpha/n$ with $0<r<\min\{p,q\}$.

$(i)$ If $s'=p$ and $s'<q<\infty$, then for any given ball $B$ in $\mathbb R^n$, we have
\begin{equation*}
\|f\cdot T_{\Omega,\alpha}(g)\|_{L^{r}(B)}\leq C\|f\|_{L^p\log L(B)}\cdot\|g\|_{L^q(B)}.
\end{equation*}

$(ii)$ If $s'=q$ and $s'<p<\infty$, then for any given ball $B$ in $\mathbb R^n$, we have
\begin{equation*}
\|f\cdot T_{\Omega,\alpha}(g)\|_{L^{r}(B)}\leq C\|f\|_{L^p(B)}\cdot\|g\|_{L^q\log L(B)}.
\end{equation*}
\end{thm}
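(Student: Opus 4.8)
The plan is to imitate the argument used for Theorems \ref{31} and \ref{thm15}, but at the endpoint $s'=p$ (resp. $s'=q$) to replace the global $L^q$-boundedness of $T_{\Omega,\alpha}$ by the local $L\log L$-type bound of Theorem \ref{thm7}. Note that here $f$ and $g$ are (implicitly) supported in $B$, as forced by the definition of the norms $\|\cdot\|_{L^p\log L(B)}$ and $\|\cdot\|_{L^q\log L(B)}$; this is essential, since otherwise $T_{\Omega,\alpha}(g)$ would feel the values of $g$ outside $B$. First I would extract the needed range conditions from the hypotheses: combining $0<r<\min\{p,q\}$ with $1/p+1/q=1/r+\alpha/n$ gives $1/q>\alpha/n$ and $1/p>\alpha/n$, hence $p,q<n/{\alpha}$, which is precisely what makes Theorems \ref{thm1} and \ref{thm7} available (with the exponents relabelled). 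Define $t$ by $1/t:=1/q-\alpha/n$; then $t\in(1,\infty)$ and the scaling relation becomes $1/r=1/p+1/t$, so H\"older's inequality for $L^p$ spaces (the first part of Lemma \ref{weakHold}, which is valid for every $0<r<\infty$) yields
\begin{equation*}
\|f\cdot T_{\Omega,\alpha}(g)\|_{L^r(B)}\le\|f\|_{L^p(B)}\cdot\|T_{\Omega,\alpha}(g)\|_{L^t(B)}.
\end{equation*}

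For part $(ii)$, where $s'=q<n/{\alpha}$, I would apply Theorem \ref{thm7} to $g$ with the exponent pair $(q,t)$ playing the role of $(p,q)$ there, obtaining $\|T_{\Omega,\alpha}(g)\|_{L^t(B)}\lesssim\|g\|_{L^q\log L(B)}$; combined with the display above this gives the claimed bound. For part $(i)$, where now $s'<q<n/{\alpha}$, Theorem \ref{thm1}$(ii)$ applies directly to $g$ and furnishes the stronger estimate $\|T_{\Omega,\alpha}(g)\|_{L^t(B)}\le\|T_{\Omega,\alpha}(g)\|_{L^t(\mathbb R^n)}\lesssim\|g\|_{L^q(\mathbb R^n)}=\|g\|_{L^q(B)}$; inserting this into the display and then dominating $\|f\|_{L^p(B)}$ by $\|f\|_{L^p\log L(B)}$ via \eqref{lplogl} completes the proof.

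Everything here amounts to index bookkeeping together with two appeals to results already established, so no genuine obstacle arises; the only mild point to watch is that $r$ may be smaller than $1$, which is harmless since Lemma \ref{weakHold} is stated for all $0<r<\infty$. It is worth remarking that part $(i)$ actually follows from the a priori stronger inequality obtained with $\|f\|_{L^p(B)}$ in place of $\|f\|_{L^p\log L(B)}$ on the right-hand side — the genuine $L\log L$ refinement only enters in part $(ii)$, where $T_{\Omega,\alpha}$ acts precisely on the factor whose Lebesgue exponent equals $s'$ — and it is stated in the displayed form only for symmetry with part $(ii)$.
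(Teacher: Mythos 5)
Your proof is correct, and for part (ii) it is precisely the paper's (one-sentence) argument made explicit: H\"older's inequality on $B$ from Lemma \ref{weakHold} with the auxiliary exponent $t$ given by $1/t=1/q-\alpha/n$ (your index bookkeeping showing $p,q<n/\alpha$ and $1/r=1/p+1/t$ is exactly what is needed), followed by the local endpoint bound of Theorem \ref{thm7}. Where you genuinely deviate is part (i): the paper cites only Theorem \ref{thm7} and Lemma \ref{weakHold}, which, taken literally, would force one to transfer $T_{\Omega,\alpha}$ onto the endpoint factor $f$ (in the spirit of the swap \eqref{dualform} used for Theorem \ref{thm17}), whereas you keep the operator on $g$, apply the global strong-type bound of Theorem \ref{thm1}$(ii)$ (legitimate since $s'<q<n/\alpha$), and then use the trivial embedding \eqref{lplogl}; this is simpler, avoids any manipulation moving the operator between the two factors, and in fact proves the stronger inequality with $\|f\|_{L^p(B)}$ in place of $\|f\|_{L^p\log L(B)}$, as you correctly remark. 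Your explicit caveat that $g$ (and $f$) must be supported in $B$ is also the right reading of the statement, which the paper leaves implicit: the left-hand side sees $g$ globally through $T_{\Omega,\alpha}$, while the right-hand side only measures $g$ on $B$, so without that support assumption the inequality could not hold.
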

\begin{proof}
Theorem \ref{thm15} is a straightforward consequence of Theorem \ref{thm1} and Lemma \ref{weakHold}, and Theorem \ref{thm16} is a straightforward consequence of Theorem \ref{thm7} and Lemma \ref{weakHold}.
\end{proof}

\subsection{Olsen-type inequalities}
A classical result of Olsen \cite{ol} states that
\begin{equation*}
\|f\cdot I_{\alpha}(g)\|_{L^{r,\kappa}}\leq C({p,q,\kappa,n})\|f\|_{L^{p,\kappa}}\cdot\|g\|_{L^{q,\kappa}},
\end{equation*}
under certain conditions on the parameters $p,q,r,\kappa$. Olsen considered this type of inequality to investigate the Schr\"{o}dinger equation. Later this inequality(also known as the trace inequality) was studied and sharpened by many authors. For more related results, the reader is referred to \cite{ol}, \cite{lida} and \cite{sa}. There is an analogue of H\"older's inequality for Morrey spaces and weak Morrey spaces. Let $0<\kappa<1$, $1\leq p,q<\infty$ and $0<r<\infty$ with $1/p+1/q=1/r$. By using H\"{o}lder's inequality for $L^p$ spaces, we get
\begin{equation}\label{dot}
\begin{split}
\bigg(\int_{B}|F(y)\cdot G(y)|^r\,dy\bigg)^{1/r}
&\leq\bigg(\int_{B}|F(y)|^p\,dy\bigg)^{1/p}\bigg(\int_{B}|G(y)|^q\,dy\bigg)^{1/q}\\
&\leq\big(\|F\|_{L^{p,\kappa}}m(B)^{\kappa/p}\big)\cdot\big(\|G\|_{L^{q,\kappa}}m(B)^{\kappa/q}\big)\\
&=\big(\|F\|_{L^{p,\kappa}}\cdot\|G\|_{L^{q,\kappa}}\big)m(B)^{\kappa/r}.
\end{split}
\end{equation}
Since \eqref{dot} is independent of the choice of the ball $B$, we conclude that $F\cdot G$ is in $L^{r,\kappa}(\mathbb R^n)$ and
\begin{equation*}
\|F\cdot G\|_{L^{r,\kappa}}\leq \|F\|_{L^{p,\kappa}}\cdot\|G\|_{L^{q,\kappa}}.
\end{equation*}
A similar argument (by H\"{o}lder's inequality for weak $L^p$ spaces) gives us the following:
\begin{equation*}
\|F\cdot G\|_{WL^{r,\kappa}}\leq C\|F\|_{L^{p,\kappa}}\cdot\|G\|_{WL^{q,\kappa}}.
\end{equation*}
\begin{lem}\label{weakMHold}
Let $0<\kappa<1$, $1\leq p,q<\infty$ and $0<r<\infty$ such that $1/r=1/p+1/q$. Then we have
\begin{enumerate}
  \item (H\"older's inequality for Morrey spaces) $\|F\cdot G\|_{L^{r,\kappa}}\leq \|F\|_{L^{p,\kappa}}\cdot\|G\|_{L^{q,\kappa}};$
  \item (H\"older's inequality for weak Morrey spaces) $\|F\cdot G\|_{WL^{r,\kappa}}\leq C\|F\|_{L^{p,\kappa}}\cdot\|G\|_{WL^{q,\kappa}}$.
\end{enumerate}
\end{lem}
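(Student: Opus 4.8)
The plan is to reduce both inequalities to the corresponding ball-by-ball Hölder inequalities already recorded in Lemma \ref{weakHold} and then pass to a supremum over all balls. First I would fix an arbitrary ball $B\subset\mathbb R^n$ and work with the truncated functions $F\cdot\chi_B$ and $G\cdot\chi_B$, since the Morrey norms are defined as suprema over $B$ of normalized $L^r$ (resp. $L^{r,\infty}$) norms of such truncations.

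For part (1), apply Hölder's inequality for $L^p$ spaces (Lemma \ref{weakHold}(1)) to obtain
\begin{equation*}
\big\|(F\cdot G)\chi_B\big\|_{L^r}\leq\big\|F\cdot\chi_B\big\|_{L^p}\cdot\big\|G\cdot\chi_B\big\|_{L^q}.
\end{equation*}
The key bookkeeping step is that the relation $1/r=1/p+1/q$ forces $\kappa/r=\kappa/p+\kappa/q$, so that $m(B)^{\kappa/r}=m(B)^{\kappa/p}\cdot m(B)^{\kappa/q}$. Dividing the displayed inequality by $m(B)^{\kappa/r}$, distributing this factor across the two terms on the right, and estimating each resulting term by the defining supremum of the respective Morrey norm yields
\begin{equation*}
\frac{1}{m(B)^{\kappa/r}}\big\|(F\cdot G)\chi_B\big\|_{L^r}\leq\|F\|_{L^{p,\kappa}}\cdot\|G\|_{L^{q,\kappa}}.
\end{equation*}
Since the right-hand side does not depend on $B$, taking the supremum over all balls gives $\|F\cdot G\|_{L^{r,\kappa}}\leq\|F\|_{L^{p,\kappa}}\cdot\|G\|_{L^{q,\kappa}}$.

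For part (2) the argument is identical, except that on each ball I would invoke Hölder's inequality for weak $L^p$ spaces (Lemma \ref{weakHold}(2)), which gives
\begin{equation*}
\big\|(F\cdot G)\chi_B\big\|_{L^{r,\infty}}\leq C\big\|F\cdot\chi_B\big\|_{L^p}\cdot\big\|G\cdot\chi_B\big\|_{L^{q,\infty}},
\end{equation*}
with an absolute constant $C=C(p,q)$. Dividing by $m(B)^{\kappa/r}$, distributing the measure factor exactly as in part (1), estimating the two resulting terms by $\|F\|_{L^{p,\kappa}}$ and $\|G\|_{WL^{q,\kappa}}$, and passing to the supremum over $B$ produces $\|F\cdot G\|_{WL^{r,\kappa}}\leq C\|F\|_{L^{p,\kappa}}\cdot\|G\|_{WL^{q,\kappa}}$.

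I do not anticipate a genuine obstacle: the lemma is a formal consequence of the scalar Hölder inequalities together with the homogeneity of the measure normalization in the Morrey norm. The only points deserving care are verifying that the exponents of $m(B)$ match, i.e. $\kappa/r=\kappa/p+\kappa/q$ (immediate from $1/r=1/p+1/q$), and—in part (2)—acknowledging that the weak-type Hölder inequality unavoidably introduces the multiplicative constant $C$, which is precisely why part (2) is stated with a constant rather than with constant $1$.
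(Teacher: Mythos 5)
Your proposal is correct and follows essentially the same route as the paper: apply the scalar (resp. weak-type) H\"older inequality on each fixed ball $B$, use $\kappa/r=\kappa/p+\kappa/q$ to split the factor $m(B)^{\kappa/r}$, and take the supremum over all balls. Nothing is missing, and the remark about the unavoidable constant $C$ in the weak case matches the paper's statement.
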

It should be pointed out that the first part of Lemma \ref{weakMHold} has been proved by Olsen \cite{ol} if $1\leq r<\infty$. In view of Lemma \ref{weakMHold} and Theorems \ref{thm2} and \ref{thm8}, we can also prove the following Olsen-type inequalities.
\begin{thm}\label{thm17}
Suppose that $\Omega\in L^s(\mathbf{S}^{n-1})$ with $1<s\leq\infty$. Let $0<\alpha<n$, $s'\leq p,q<\infty$, and $1/p+1/q=1/r+\alpha/{n(1-\kappa)}$ with $0<r<\min\{p,q\}$ and $0<\kappa<1$.

$(i)$ If $s'<p,q<\infty$, then for any $f\in L^{p,\kappa}(\mathbb R^n)$ and $g\in L^{q,\kappa}(\mathbb R^n)$, there exists a constant $C>0$ independent of $f$ and $g$ such that
\begin{equation*}
\|f\cdot T_{\Omega,\alpha}(g)\|_{L^{r,\kappa}}\leq C\|f\|_{L^{p,\kappa}}\cdot\|g\|_{L^{q,\kappa}}.
\end{equation*}

$(ii)$ If $s'=p$ and $s'<q<\infty$(or $s'=q$ and $s'<p<\infty$), then for any $f\in L^{p,\kappa}(\mathbb R^n)$ and $g\in L^{q,\kappa}(\mathbb R^n)$, there exists a constant $C>0$ independent of $f$ and $g$ such that
\begin{equation*}
\|f\cdot T_{\Omega,\alpha}(g)\|_{WL^{r,\kappa}}\leq C\|f\|_{L^{p,\kappa}}\cdot\|g\|_{L^{q,\kappa}}.
\end{equation*}
\end{thm}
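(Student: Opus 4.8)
The plan is to deduce Theorem~\ref{thm17} by combining the norm inequalities for $T_{\Omega,\alpha}$ on Morrey spaces established in Theorem~\ref{thm2} (and its endpoint refinement Theorem~\ref{thm8}) with the H\"older-type inequality for Morrey and weak Morrey spaces recorded in Lemma~\ref{weakMHold}. The underlying bookkeeping is that if we set $1/t := 1/q - \alpha/[n(1-\kappa)]$, then the hypothesis $1/p + 1/q = 1/r + \alpha/[n(1-\kappa)]$ rearranges to $1/r = 1/p + 1/t$, so that $f$ and $T_{\Omega,\alpha}(g)$ are a H\"older-admissible pair for the exponent $r$. The condition $0<r<\min\{p,q\}$ together with $s'\le q$ guarantees $s'\le q<n/\alpha$ when combined with $r>0$ (since $1/q > 1/r - 1/p + \alpha/[n(1-\kappa)] > \alpha/[n(1-\kappa)] > \alpha/n$ uses $r<p$), so $t$ is a legitimate exponent in $(1,\infty)$ and all hypotheses of Theorems~\ref{thm2}/\ref{thm8} are met for the pair $(q,t,\kappa)$.

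For part $(i)$, I would first apply H\"older's inequality for Morrey spaces (Lemma~\ref{weakMHold}(1)) with exponents $p$ and $t$:
\begin{equation*}
\|f\cdot T_{\Omega,\alpha}(g)\|_{L^{r,\kappa}} \leq \|f\|_{L^{p,\kappa}}\cdot\|T_{\Omega,\alpha}(g)\|_{L^{t,\kappa}}.
\end{equation*}
Since $s'<q<n/\alpha$ and $1/t = 1/q - \alpha/[n(1-\kappa)]$, part $(ii)$ of Theorem~\ref{thm2} gives $\|T_{\Omega,\alpha}(g)\|_{L^{t,\kappa}} \lesssim \|g\|_{L^{q,\kappa}}$, and substituting this in yields the claimed bound $\|f\cdot T_{\Omega,\alpha}(g)\|_{L^{r,\kappa}} \leq C\|f\|_{L^{p,\kappa}}\cdot\|g\|_{L^{q,\kappa}}$.

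For part $(ii)$, suppose first $s'=q$ and $s'<p<\infty$. Then $q=s'$, and part $(i)$ of Theorem~\ref{thm2} gives only the weak estimate $\|T_{\Omega,\alpha}(g)\|_{WL^{t,\kappa}} \lesssim \|g\|_{L^{q,\kappa}}$. Pairing with $f\in L^{p,\kappa}$ via H\"older's inequality for weak Morrey spaces (Lemma~\ref{weakMHold}(2), with the strong factor being $f$ and the weak factor $T_{\Omega,\alpha}(g)$) gives
\begin{equation*}
\|f\cdot T_{\Omega,\alpha}(g)\|_{WL^{r,\kappa}} \leq C\|f\|_{L^{p,\kappa}}\cdot\|T_{\Omega,\alpha}(g)\|_{WL^{t,\kappa}} \leq C\|f\|_{L^{p,\kappa}}\cdot\|g\|_{L^{q,\kappa}},
\end{equation*}
as desired. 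In the symmetric case $s'=p$ and $s'<q<\infty$, $T_{\Omega,\alpha}(g)$ is still in $L^{t,\kappa}$ (strong) by Theorem~\ref{thm2}$(ii)$ since $q>s'$, while $f\in L^{p,\kappa}=L^{s',\kappa}$, so one instead writes $f = f\cdot\chi$ as the weak factor; more cleanly, one simply uses that $L^{p,\kappa}\subset WL^{p,\kappa}$ and applies Lemma~\ref{weakMHold}(2) with the roles reversed, absorbing the constant. The main technical point to be careful about—and the only place an oversight could creep in—is verifying that the exponent $t$ genuinely lies in the admissible range $s'\le q < n/\alpha$ forced by the constraint $r<\min\{p,q\}$ and $0<\kappa<1$; once that is checked, everything reduces to the two cited lemmas and no further estimation is needed.
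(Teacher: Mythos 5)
Your proposal is correct, and for part $(i)$ it coincides with the paper's argument: H\"older's inequality for Morrey spaces (Lemma~\ref{weakMHold}(1)) with the auxiliary exponent $1/t=1/q-\alpha/[n(1-\kappa)]$, followed by Theorem~\ref{thm2}$(ii)$; your check that $0<r<\min\{p,q\}$ forces $\kappa<1-(\alpha q)/n$ (hence $q<n/\alpha$ and $t\in(1,\infty)$) is exactly the observation the paper records at the start of its proof. Where you genuinely diverge is part $(ii)$. The paper handles the case $s'=p$, $s'<q$ by transferring the operator onto $f$: it invokes the identity \eqref{dualform}, i.e. $\|f\cdot T_{\Omega,\alpha}(g)\|_{WL^{r,\kappa}}=\|T_{\widetilde{\Omega},\alpha}(f)\cdot g\|_{WL^{r,\kappa}}$ with $\widetilde{\Omega}(x)=\Omega(-x)$, then applies Lemma~\ref{weakMHold}(2) and the weak-type part of Theorem~\ref{thm2} to $T_{\widetilde{\Omega},\alpha}(f)$, declaring the case $s'=q$, $s'<p$ symmetric. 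You instead keep the operator on $g$ throughout: when $s'=q<p$ you use Theorem~\ref{thm2}$(i)$ on $g$ together with the weak H\"older inequality, and when $s'=p<q$ you observe that Theorem~\ref{thm2}$(ii)$ still gives a strong bound for $T_{\Omega,\alpha}(g)$, so either the strong Morrey--H\"older inequality followed by the trivial embedding $L^{r,\kappa}\hookrightarrow WL^{r,\kappa}$, or Lemma~\ref{weakMHold}(2) with the roles of the factors exchanged and $\|f\|_{WL^{p,\kappa}}\le\|f\|_{L^{p,\kappa}}$, finishes the proof. This is a legitimate and arguably cleaner route: it avoids the transference step entirely --- note that $f\cdot T_{\Omega,\alpha}(g)$ and $T_{\widetilde{\Omega},\alpha}(f)\cdot g$ are different functions, so the norm equality \eqref{dualform} is the one delicate point of the paper's argument, and your proof does not need it --- and as a byproduct it shows that in the subcase $s'=p<q$ the strong $L^{r,\kappa}$ inequality actually holds, not merely the weak one claimed. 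Two cosmetic remarks: the ``$f=f\cdot\chi$'' aside is unnecessary (the embedding argument you give immediately afterwards is the one to keep), and in your exponent check the first inequality should be the equality $1/q=1/r-1/p+\alpha/[n(1-\kappa)]$; neither affects the validity of the proof.
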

\begin{proof}
The condition $1/p+1/q=1/r+\alpha/{n(1-\kappa)}$ with $0<r<\min\{p,q\}$ implies that $0<\kappa<1-{(\alpha p)}/n$ and $0<\kappa<1-{(\alpha q)}/n$.
\begin{itemize}
  \item If $s'<p,q<\infty$, by using Lemma \ref{weakMHold}(1), then we have
\begin{equation*}
\|f\cdot T_{\Omega,\alpha}(g)\|_{L^{r,\kappa}}\leq \|f\|_{L^{p,\kappa}}\cdot\|T_{\Omega,\alpha}(g)\|_{L^{q^{*},\kappa}},
\end{equation*}
where the positive number $q^{*}$ satisfies $1/r=1/p+1/{q^{*}}$. Notice that $1/{q^{*}}=1/q-\alpha/{n(1-\kappa)}$. Thus, by Theorem \ref{thm2}, we further have
\begin{equation*}
\|f\cdot T_{\Omega,\alpha}(g)\|_{L^{r,\kappa}}\leq C\|f\|_{L^{p,\kappa}}\cdot\|g\|_{L^{q,\kappa}},
\end{equation*}
whenever $s'<q<n/{\alpha}$ and $0<\kappa<1-{(\alpha q)}/n$. In addition, it is easy to see that
\begin{equation}\label{dualform}
\|f\cdot T_{\Omega,\alpha}(g)\|_{L^{r,\kappa}}=\|T_{\widetilde{\Omega},\alpha}(f)\cdot g\|_{L^{r,\kappa}},
\end{equation}
where $\widetilde{\Omega}(x)$ is the same as in the previous proof. Obviously, under the same conditions of Theorem \ref{thm2}, the conclusion also holds for the fractional integral $T_{\widetilde{\Omega},\alpha}$ related to $\widetilde{\Omega}(x)$. It follows from \eqref{dualform} and Lemma \ref{weakMHold}(1) that
\begin{equation*}
\|f\cdot T_{\Omega,\alpha}(g)\|_{L^{r,\kappa}}\leq\|T_{\widetilde{\Omega},\alpha}(f)\|_{L^{p^{*},\kappa}}\cdot\|g\|_{L^{q,\kappa}},
\end{equation*}
where the positive number $p^{*}$ satisfies $1/r=1/{p^{*}}+1/q$. Also notice that $1/{p^{*}}=1/p-\alpha/{n(1-\kappa)}$. Therefore, by using Theorem \ref{thm2} again,
\begin{equation*}
\|f\cdot T_{\Omega,\alpha}(g)\|_{L^{r,\kappa}}\leq C\|f\|_{L^{p,\kappa}}\cdot\|g\|_{L^{q,\kappa}},
\end{equation*}
whenever $s'<p<n/{\alpha}$ and $0<\kappa<1-{(\alpha p)}/n$. This shows $(i)$.
  \item If $s'=p$ and $s'<q<\infty$, from \eqref{dualform}, Lemma \ref{weakMHold}(2) and Theorem \ref{thm2}, it then follows that
\begin{equation*}
\begin{split}
\|f\cdot T_{\Omega,\alpha}(g)\|_{WL^{r,\kappa}}&=\|T_{\widetilde{\Omega},\alpha}(f)\cdot g\|_{WL^{r,\kappa}}\\
&\leq\|T_{\widetilde{\Omega},\alpha}(f)\|_{WL^{p^{*},\kappa}}\cdot\|g\|_{L^{q,\kappa}}\\
&\leq C\|f\|_{L^{p,\kappa}}\cdot\|g\|_{L^{q,\kappa}}.
\end{split}
\end{equation*}
As for the case $s'=q$ and $s'<p<\infty$, the proof is the same. This shows $(ii)$ and the proof is complete.
\end{itemize}
\end{proof}

\begin{thm}\label{thm18}
Suppose that $\Omega\in L^s(\mathbf{S}^{n-1})$ with $1<s\leq\infty$. Let $0<\alpha<n$, $s'\leq p,q<\infty$ and $1/p+1/q=1/r+\alpha/{n(1-\kappa)}$ with $0<r<\min\{p,q\}$ and $0<\kappa<1$.

$(i)$ If $s'=q$ and $s'<p<\infty$, then we have
\begin{equation*}
\|f\cdot T_{\Omega,\alpha}(g)\|_{L^{r,\kappa}}\leq C\|f\|_{L^{p,\kappa}}\cdot\|g\|_{(L\log L)^{q,\kappa}}.
\end{equation*}

$(ii)$ If $s'=p$ and $s'<q<\infty$, then we have
\begin{equation*}
\|f\cdot T_{\Omega,\alpha}(g)\|_{L^{r,\kappa}}\leq C\|f\|_{(L\log L)^{p,\kappa}}\cdot\|g\|_{L^{q,\kappa}}.
\end{equation*}
\end{thm}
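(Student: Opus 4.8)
The plan is to follow exactly the scheme used for Theorem \ref{thm17}, but with the strong/weak $L^{p,\kappa}$ mapping properties of $T_{\Omega,\alpha}$ (Theorem \ref{thm2}) replaced by the endpoint Morrey estimate of Theorem \ref{thm8}. The two ingredients are H\"older's inequality for Morrey spaces (Lemma \ref{weakMHold}(1)) and the symmetry relation \eqref{dualform}, namely $\|f\cdot T_{\Omega,\alpha}(g)\|_{L^{r,\kappa}}=\|T_{\widetilde{\Omega},\alpha}(f)\cdot g\|_{L^{r,\kappa}}$ with $\widetilde{\Omega}(x)=\Omega(-x)$. Before anything else I would record the parameter bookkeeping, as in the proof of Theorem \ref{thm17}: the identity $1/p+1/q=1/r+\alpha/{n(1-\kappa)}$ together with $0<r<\min\{p,q\}$ forces $0<\kappa<1-{(\alpha p)}/n$ and $0<\kappa<1-{(\alpha q)}/n$, and in particular $p<n/{\alpha}$ and $q<n/{\alpha}$, so that Theorem \ref{thm8} is applicable at whichever endpoint index equals $s'$.

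For part $(i)$, where $s'=q$ and $s'<p<\infty$, the factor lying in the $L\log L$-type space is $g$, which is already the argument of $T_{\Omega,\alpha}$, so no duality is needed. I would apply Lemma \ref{weakMHold}(1) directly to split
\begin{equation*}
\|f\cdot T_{\Omega,\alpha}(g)\|_{L^{r,\kappa}}\leq\|f\|_{L^{p,\kappa}}\cdot\|T_{\Omega,\alpha}(g)\|_{L^{q^{*},\kappa}},
\end{equation*}
where $1/{q^{*}}=1/r-1/p$. The defining relation gives $1/{q^{*}}=1/q-\alpha/{n(1-\kappa)}$, and since $q=s'<n/{\alpha}$ and $0<\kappa<1-{(\alpha q)}/n$, Theorem \ref{thm8} (with endpoint index $s'=q$) yields $\|T_{\Omega,\alpha}(g)\|_{L^{q^{*},\kappa}}\leq C\|g\|_{(L\log L)^{q,\kappa}}$, which finishes this case.

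For part $(ii)$, where $s'=p$ and $s'<q<\infty$, the $L\log L$-type factor is $f$, which is \emph{not} the argument of $T_{\Omega,\alpha}$; here I would first pass to the dual form \eqref{dualform}, then apply Lemma \ref{weakMHold}(1) to obtain $\|T_{\widetilde{\Omega},\alpha}(f)\cdot g\|_{L^{r,\kappa}}\leq\|T_{\widetilde{\Omega},\alpha}(f)\|_{L^{p^{*},\kappa}}\cdot\|g\|_{L^{q,\kappa}}$ with $1/{p^{*}}=1/r-1/q=1/p-\alpha/{n(1-\kappa)}$, and finally invoke Theorem \ref{thm8} applied to $T_{\widetilde{\Omega},\alpha}$ (endpoint index $s'=p$) to get $\|T_{\widetilde{\Omega},\alpha}(f)\|_{L^{p^{*},\kappa}}\leq C\|f\|_{(L\log L)^{p,\kappa}}$. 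The only delicate point — and the closest thing to an obstacle — is verifying that Theorem \ref{thm8} does apply verbatim to $T_{\widetilde{\Omega},\alpha}$ (it does, since every estimate feeding into that theorem depends on $\Omega$ only through $\|\Omega\|_{L^s(\mathbf{S}^{n-1})}=\|\widetilde{\Omega}\|_{L^s(\mathbf{S}^{n-1})}$) and that the exponent identities line up so that the intermediate index $q^{*}$ (resp.\ $p^{*}$) is precisely the one dictated by both H\"older's inequality and Theorem \ref{thm8}; beyond this the argument is routine.
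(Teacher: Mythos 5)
Your proposal is correct and is essentially the paper's own argument: part $(i)$ is verbatim the paper's proof (Morrey--H\"older via Lemma \ref{weakMHold}(1) with $1/{q^{*}}=1/r-1/p=1/q-\alpha/{n(1-\kappa)}$, followed by Theorem \ref{thm8}), and your dual-form treatment of part $(ii)$ through \eqref{dualform} and Theorem \ref{thm8} applied to $T_{\widetilde{\Omega},\alpha}$ is precisely what the paper intends by ``$(ii)$ can be proved in the same manner,'' mirroring the proof of Theorem \ref{thm17}. Your preliminary bookkeeping (that $0<r<\min\{p,q\}$ forces $\kappa<1-{(\alpha p)}/n$ and $\kappa<1-{(\alpha q)}/n$, hence $p,q<n/{\alpha}$) also matches the paper's verification.
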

\begin{proof}
$(i)$ It follows immediately from Lemma \ref{weakMHold}(1) that
\begin{equation*}
\|f\cdot T_{\Omega,\alpha}(g)\|_{L^{r,\kappa}}\leq \|f\|_{L^{p,\kappa}}\cdot\|T_{\Omega,\alpha}(g)\|_{L^{q^{*},\kappa}},
\end{equation*}
where $1/r=1/p+1/{q^*}$. Note that $1/{q^*}=1/q-\alpha/{n(1-\kappa)}$. Thus, by Theorem \ref{thm8}, we deduce that
\begin{equation*}
\|f\cdot T_{\Omega,\alpha}(g)\|_{L^{r,\kappa}}\leq C\|f\|_{L^{p,\kappa}}\cdot\|g\|_{(L\log L)^{q,\kappa}}.
\end{equation*}
This proves $(i)$. $(ii)$ can be proved in the same manner and the proof is complete.
\end{proof}

\bibliographystyle{elsarticle-harv}
\bibliography{<your bibdatabase>}
\begin{center}
References
\end{center}

\end{document}